\newcommand{\bctikz}{$$\begin{tikzcd}}
\newcommand{\ectikz}{\end{tikzcd}$$}
\newcommand{\ntikz}{\end{tikzcd}\qquad\qquad\begin{tikzcd}}
\newtheorem{thm}{Theorem}[section]
\newtheorem*{thm*}{Theorem}
\newtheorem{lem}[thm]{Lemma}
\newtheorem{prop}[thm]{Proposition}
\newtheorem{cor}[thm]{Corollary}
\theoremstyle{definition}
\newtheorem{defn}[thm]{Definition}
\theoremstyle{definition}
\newtheorem{ex}[thm]{Example}
\theoremstyle{remark}
\newtheorem{rem}[thm]{Remark}
\newcommand{\ul}{\underline}
\newcommand{\cat}[1]{{\mathbf{#1}}} % font for named categories
\newcommand{\smallcat}[1]{{\mathcal{#1}}} % font for small category variables
\newcommand{\bicat}{\cat} % font for names bi-/2-categories
\newcommand{\cell}[1]{{\widehat{#1}}} % style for cell diagram categories
\newcommand{\blp}{\Big{(}}
\newcommand{\brp}{\Big{)}}
\newcommand{\emb}{\hookrightarrow}
\newcommand{\atol}{\xrightarrow}
\newcommand{\adj}{\rightleftharpoons}
\newcommand{\fib}{\twoheadrightarrow}
\newcommand{\scdots}[2][]{\mathinner{#1\overset{#2}{\cdots}#1}}
\DeclareMathOperator{\Hom}{Hom}
\DeclareMathOperator{\id}{id}
\DeclareMathOperator{\Ob}{Ob}
\DeclareMathOperator*{\colim}{colim}
\newcommand{\coprodl}{\coprod\limits}%For inline coproducts
\newcommand{\bigint}{\oint}%For opfibrations
\newcommand{\sint}{\smallint}
\newcommand{\soint}{{\textstyle\oint}}%For opfibrations
\newcommand{\ints}{\mathbb{Z}}
\newcommand{\nats}{\mathbb{N}}
\newcommand{\Set}{\cat{Set}}
\newcommand{\Cat}{\cat{Cat}}
\newcommand{\CAT}{\cat{CAT}}
\newcommand{\G}{\cat{G}}
\newcommand{\A}{\smallcat{A}}
\newcommand{\B}{\smallcat{B}}
\newcommand{\C}{\smallcat{C}}
\newcommand{\R}{\smallcat{R}}
\newcommand{\X}{\smallcat{X}}
\newcommand{\ah}{\cell{\A}}
\newcommand{\bh}{\cell{\B}}
\newcommand{\ch}{\cell{\C}}
\newcommand{\rh}{\cell{\R}}
\newcommand{\rhp}{{\cell{\R}_+}}
\newcommand{\chp}{{\ch'}}
\newcommand{\chpp}{{\ch''}}
\newcommand{\chppp}{{\ch'''}}
\newcommand{\sqh}{\cell{\square}}
\newcommand{\delt}{\mathop{\mathchoice{\mathlarger{\mathlarger{\Delta}}}{\mathlarger{\mathlarger{\Delta}}}{\scriptstyle \Delta}{\scriptscriptstyle \Delta}}}
\renewcommand{\prod}{\mathop{\mathchoice{\mathlarger{\mathlarger{\Pi}}}{\mathlarger{\mathlarger{\Pi}}}{\scriptstyle \Pi}{\scriptscriptstyle \Pi}}}
\renewcommand{\sum}{\mathop{\mathchoice{\mathlarger{\mathlarger{\Sigma}}}{\mathlarger{\mathlarger{\Sigma}}}{\scriptstyle \Sigma}{\scriptscriptstyle \Sigma}}}
\newcommand{\starrows}{{\;\mathinner{\underset{\tau}{\overset{\sigma}{\rightrightarrows}}}\;}}
\renewcommand{\d}{\partial}
\newcommand{\eps}{\varepsilon}
\newcommand{\s}{\square}
\newcommand{\Th}{\Theta}
\newcommand{\Thth}{{\cell{\Th}_T}}
\newcommand{\Thoh}{{\cell{\Th}_0}}
\newlength\thetawidth
\newlength\thetaheight
\newcommand*{\STheta}{
  \begingroup
    \setlength{\mathsurround}{0pt}
    \settowidth{\thetawidth}{$\Theta$}
    \settoheight{\thetaheight}{$\Theta$}
    \begin{tikzpicture}[x=\thetawidth, y=\thetaheight,]
    \draw[line width=.1\thetawidth, line cap=round]
      (0.125,.05\thetawidth) -- +($(0,1)-(0,.1\thetawidth)$)
      (0.875,.05\thetawidth) -- +($(0,1)-(0,.1\thetawidth)$);
    \draw[line width=.05\thetaheight, line cap=butt]
      (0.125,.025\thetaheight) -- +(0.75,0)
      ($(0.125,1)-(0,.025\thetaheight)$) -- +(0.75,0);
    \draw[line width=.1\thetawidth, line cap=butt]
      (0.275,0.5) -- (0.725,0.5);
    \draw[line width=.0375\thetaheight, line cap=butt]
      ($(0.275,0.5)-(0,.1\thetawidth)$) -- +(0,.2\thetawidth)
      ($(0.725,0.5)-(0,.1\thetawidth)$) -- +(0,.2\thetawidth);
    \end{tikzpicture}
  \endgroup
}
\newcommand{\Fam}{\bicat{Fam}}
\newcommand{\Rep}{\bicat{Rep}}
\newcommand{\Poly}{\bicat{Poly}}
\renewcommand{\H}{H}
\newcommand{\Hse}{{\H_{(S,E)}}}
\newcommand{\Hsep}{{\H_{(S',E')}}}
\newcommand{\g}{{\mathfrak{gr}}}
\newcommand{\uln}{{\ul{n}}}
\title{Familial Monads as Higher Category Theories}
\author{Brandon T. Shapiro}
\date{}                                           % Activate to display a given date or no date
\begin{document}
\maketitle

\begin{abstract}
Categories, $n$-categories, double categories, and multicategories (among others) all have similar definitions as collections of cells with composition operations. We give an explicit description of the information required to define any higher category structure which arises as algebras for a familially representable monad on a presheaf category, then use this to describe several examples relating to higher category theory and cubical sets. The proof of this characterization avoids tedious naturality arguments by passing through the theory of categorical polynomials; along the way, we give descriptions of pullbacks, composites, and exponentiations of split opfibrations in terms of their classifying functors which may be of independent interest.
\end{abstract}

\setcounter{tocdepth}{1}
\tableofcontents

\section*{Introduction}

Categories, $n$-categories, bicategories, double categories, multicategories, and even most types of monoidal categories all have many features in common. In each structure there are some sort of underlying cells (such as objects and arrows), certain arrangements of those cells which can be composed, and equations between different ways of composing the same arrangement. The structure of a category can be described as its underlying graph equipped with additional algebraic structure: a category is an algebra for the free category monad on graphs. Similarly, all of the structures above are algebras for a \emph{familially representable} (or simply \emph{familial}) monad on a presheaf category $\ch := \Set^{\C^{op}}$.

% A structure of this kind is an algebra for a \emph{familially representable} monad on a presheaf category $\ch := \Set^{\C^{op}}$. Familially representable monads, which we call simply \emph{familial}, include the free category monad on graphs, whose algebras are categories, the free $n$-category monad on $n$-graphs\footnote{a.k.a. $n$-truncated globular sets}, and similarly for the remaining structures above.

These structures all admit nerve functors, enrichments, notions of limit and colimit, various notions of weak equivalence, and versions of the Yoneda lemma. It would be reasonable to expect that many of these constructions can be unified using the language of familial monads. Weber's Nerve Theorem (\cite[Theorem 4.10]{WeberPra}) shows precisely this for nerve functors: for every familial\footnote{Weber in fact proves this more generally for any ``monad with arities.''} monad $T$, there is a fully faithful functor from algebras of $T$ to a presheaf category $\Thth$. 

The goal of this program is to generalize features of the theory of categories to more general high- and low-dimensional categorical structures modeled as familial monads. In this first paper, we provide a language for defining familial monads in terms of the ``composition operations'' in their algebras, which we use to straightforwardly construct new examples and, in subsequent work, generalize constructions from category theory. In a follow up paper \cite{enrichment} we define enrichment for algebras of any familial monad % with top-dimensional cell shapes, and show that one type of higher category enriched in another forms a new even higher-dimensional category structure 
(for instance, what it means to have multicategory-enriched double categories). %In ongoing work, we show that a broad class of different types of higher categories and their nerves provide models for the homotopy theory of spaces. 
The following main result provides a concrete description of the data that determines a familial monad, designed for ease of defining new such monads based on categorical structures found in the wild.

\begin{thm*}{\textup{(\cref{monad_representation})}}
For a small category $\C$ of ``cell shapes,'' a familial monad $T$ on the presheaf category $\ch$ is completely specified by the following data:
\begin{itemize}
	\item A functor $S \colon \C^{op} \to \Set$, where $t \in Sc$ is an ``operation outputting a cell of shape $c$''
	\item A functor $E \colon \int S \to \ch$, where the presheaf $Et$ is the ``arity'' of the operation $t$
	\item For each cell shape $c$ in $\C$, a ``unit'' operation $e(c) \in Sc$ along with an isomorphism $Ee(c) \cong y(c)$
	\item For each operation $t \in Sc$ and $f \colon Et \to S$ in $\ch$, a ``composite'' operation $m(t,f)$ along with an isomorphism
		$$Em(t,f) \cong \colim_{x \in Et} Ef(x)$$
\end{itemize}
subject to several equations. The monad $T$ then sends a presheaf $X$ to the presheaf $TX$ with
$$TX_c = \coprod_{t \in Sc} \Hom(Et,X).$$
\end{thm*}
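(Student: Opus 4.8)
The plan is to build $T$, its unit $\eta$, and its multiplication $\mu$ out of the listed data by first recasting familial endofunctors in combinatorial terms and then seeing what a monad structure amounts to there. \textbf{First}, I would recall (or prove, by a density argument) that a familial endofunctor of $\ch$ --- equivalently, a parametric right adjoint $\ch\to\ch$ --- is precisely a pair $(S\colon\C^{op}\to\Set,\ E\colon\int S\to\ch)$, via $TX_c=\coprod_{t\in Sc}\Hom(Et,X)$, with the presheaf structure in $c$ supplied by functoriality of $S$ and $E$. A natural transformation between two such functors is detected on the universal elements $\id_{Et}\in\Hom(Et,Et)$, so it is the same as a pair $(\sigma\colon S\to S',\ \rho)$ with $\sigma$ a map of presheaves and $\rho\colon E'\circ\int\sigma\Rightarrow E$ a natural transformation (where $\int\sigma\colon\int S\to\int S'$ is the induced functor); in particular $\id_\ch$ corresponds to $(1,y)$ with $1$ terminal and $y$ the Yoneda embedding, and $T1\cong S$ for every familial $T$.

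\textbf{Second}, I would compute $T\circ T$ in these terms. Writing $Et$ as the canonical colimit of representables over its category of elements and commuting the resulting limit past the coproducts defining $TX$ (limits of coproducts in $\Set$ distribute as a coproduct indexed by the compatible choices, which here are exactly the presheaf maps $Et\to S$), one obtains
$$\Hom(Et,TX)\;\cong\;\coprod_{f\colon Et\to S}\Hom\Big(\colim_{x\in Et}Ef(x),\,X\Big),$$
so $T\circ T$ is again familial, with operations presheaf $TS$ and with the arity of the operation $(t,f)$ being $\colim_{x\in Et}Ef(x)$ --- exactly the presheaf in the fourth bullet. (The same computation shows that a composite of familial functors is always familial, which I will want in the last step.)

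\textbf{Third}, I would feed the monad data through the previous two steps. The unit $\eta\colon\id\to T$ becomes a morphism $(1,y)\to(S,E)$, i.e.\ a global section of $S$ --- the family $e(c)\in Sc$ --- together with a comparison $\rho_c\colon Ee(c)\to y(c)$; the multiplication $\mu\colon T^2\to T$ becomes a morphism $(TS,E'')\to(S,E)$ (with $E''$ the arity functor of Step 2), i.e.\ an assignment $(t,f)\mapsto m(t,f)\in Sc$ together with a comparison $\bar\rho_{(t,f)}\colon Em(t,f)\to\colim_{x\in Et}Ef(x)$. The monad axioms then become the ``several equations'' --- the various naturality conditions, the two triangle identities, and associativity --- and, crucially, the forward direction must verify that $\rho_c$ and $\bar\rho_{(t,f)}$ are \emph{isomorphisms}: unwinding the triangle identities (together, where needed, with associativity) via the composition formula of Step 2 exhibits each of $\rho$ and $\bar\rho$ as one half of an inverse pair with a canonical comparison map. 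I expect this last point to be the main obstacle: done by hand it requires the functoriality of $T\circ(-)$ on $2$-cells, a fair amount of whiskering and pasting, and care about which operations $(t,f)$ actually occur in $T\eta$ as opposed to $\eta T$ --- precisely the ``tedious naturality arguments'' the paper sets out to avoid.

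\textbf{Finally}, the route I would actually commit to is to run Steps 2 and 3 inside a bicategory $\Poly$ of categorical polynomials: familial endofunctors of $\ch$ become the $1$-cells of $\Poly(\ch,\ch)$ (the pair $(S,E)$ being such a polynomial), horizontal composition becomes the formula of Step 2, and the horizontal unit becomes $(1,y)$, so that a familial monad on $\ch$ is literally a monad --- a monoid --- in $\Poly$. Unwinding that monoid structure reproduces the four bullets, but now the $2$-cells one composes in $\Poly$ are by definition the representable (``cartesian'') ones, so the invertibility of the arity comparisons is built into the framework rather than extracted by hand. The converse direction --- that the bulleted data, subject to the listed equations, assembles back into a familial monad --- is then just a check that the dictionary of Steps 1--3 is respected.
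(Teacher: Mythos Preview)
Your ``Finally'' paragraph is essentially the paper's route---realise familial monads as monoids in a bicategory whose 1-cells are the pairs $(S,E)$ and whose 2-cells are cartesian, so that invertibility of the arity comparisons is built in---but there are two genuine gaps in getting there.

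First, Step~3's plan to \emph{derive} invertibility of $\rho,\bar\rho$ from the monad axioms fails. Cartesianness of $\eta,\mu$ is part of the \emph{definition} of a familial monad, not a consequence of it: the reader monad $TX=X^2$ on $\Set$ (unit the diagonal, multiplication $\mu((a,b),(c,d))=(a,d)$) has familial underlying functor and satisfies all the monad laws, yet its unit is not cartesian---no unwinding of the triangle or associativity identities will invert the comparison $\rho\colon 2\to 1$. Your Step~1 dictionary should therefore restrict from the outset to cartesian transformations, i.e.\ those with $\rho$ invertible, as in the paper's local equivalence $\Rep(\C',\C)\simeq\Fam(\chp,\ch)$; the forward direction of the theorem is then immediate rather than an obstacle.

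Second, ``run Steps 2 and 3 inside $\Poly$'' hides the paper's main technical work. Familial polynomials are not closed under composition in the bicategory of polynomials in $\Cat$: the composite's fibre over $(t,f)$ is the \emph{lax} colimit $\soint(E'\circ\sint f)$, not the strict $\colim_x E'f(x)$ of your Step~2. The paper therefore builds a separate bicategory $\Rep$ with the Step~2 formula as its horizontal composition, compares it to very fibrous polynomials via a \emph{colax} functor $\g$ whose structure maps collapse lax colimits to strict ones, observes that $P_d$ inverts those structure maps so that $H=P_d\circ\g$ is a genuine bifunctor into $\Fam$, and finally deduces the pentagon and triangle laws for $\Rep$ from those of $\Fam$ using faithfulness of $H$. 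You would need this argument, or an equivalent one, to know that your composition on representations is coherently associative and unital.
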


An algebra $A$ for the monad $T$ is a type of higher category: it has cells that form a presheaf on $\C$, like the underlying graph of a category. For each operation $t \in Sc$ it is equipped with a function $\Hom(Et,A) \to A_c$ sending each diagram of shape $Et$ in $A$ to a $c$-shaped cell in $A$, like the operation sending a string of $n$ arrows in a category to its composite. The unit and composite operations $e(c)$ and $m(t,f)$ ensure that, respectively, there is always an operation sending a $c$-cell to itself and applying one operation to the outputs of others is itself an operation. The equations governing a higher category theory can be expressed in this language, like in the theory of categories (among others) where an associativity equation asserts that two different operations with the same arity built out of nested composites are equal. The data $(\C,S,E,e,m)$ is called a \emph{higher category schema}. %Labeling these is how equations are defined, like the associativity equation in a category that ensures composing the first of three arrows with the composite of the second two agrees with composing them in the opposite order: both composites of operations are equal to the operation composing three arrows at once.

We use this language to provide several examples of familial monads beyond those mentioned above. The algebraic structure of a higher category typically involves a combination of degeneracies, symmetries, and compositions, and we describe examples illustrating each of these. A common theme in these examples is familial monads on \emph{semicubical sets}, presheaves with minimally structured cubical cells in each dimension. These cubical cell shapes admit a rich variety of degeneracies, symmetries, and compositions which illustrate the broad scope of higher category structures defined by familial monads. In particular we define monads for cubical sets over semicubical sets, symmetric cubical sets over cubical sets, and cubical $\omega$-categories over semicubical sets. We also discuss more general monads for adding degeneracies indexed by any Reedy category and adding symmetries to presheaves over $\C$ indexed by a crossed $\C$-group in the sense of \cite[Definition 2.1]{generalizedreedy}.

It has been shown in \cite[Theorem 8.1]{WeberGen} and \cite[Proposition 3.8]{Shapely} that \emph{familial functors} are equivalent to a category of pairs $(S,E)$ as above, which we call \emph{familial representations}. It follows intuitively that familial monads should admit a similar description, with a translation of the unit and multiplication transformations into the language of familial representations. Examples of familial monads are defined in \cite[Section 9]{WeberGen}, \cite[Example 2.14]{WeberPra}, and \cite[Proposition 2.9]{Shapely} in manners similar to the above, but the only general description of the data determining a familial monad on $\ch$ is that of \cite[Section C.3]{leinster} in the case when $\C$ is a discrete category, and a complete list of the coherence equations this data must satisfy does not appear in the literature.

Our characterization provides a general strategy for defining a familial monad on $\ch$ for any small category $\C$ directly in terms of the operations and equations between them in its algebras. It also allows for familial monads to be studied in the unified language of higher category schema, and for broad classes of familial monads to be defined all at once. In \cite{thesis}, we illustrate this by defining, for familial monads $T$ and $T'$ subject to minimal conditions, a new familial monad $T \wr T'$ whose algebras behave like $T$-algebras enriched in $T'$-algebras.

A direct proof of this characterization, showing how to specify $\eta,\mu$ based on a higher category schema $(\C,S,E,e,m)$ and checking that they satisfy the unit and associativity equations, would require several enormous diagrams and scores of tedious naturality proofs. Instead, we provide a more conceptual argument, taking advantage of the machinery of categorical polynomials as developed in \cite{GambinoKock} and \cite{WeberPoly}. In particular, we define a bicategory $\Rep$ with 1-cells familial representations $(S,E)$ and show that it is biequivalent to the 2-category $\Fam$ of presheaf categories, familial functors, and and cartesian natural transformations by passing through categorical polynomials (\cref{is_bicategory_equivalent}). Restricting this biequivalence to formal monads in these bicategories yields an equivalence between higher category schema and familial monads. In order to reach categorical polynomials and analyze their composition, in \cref{grothendiecks} we describe how the pullbacks, composites, and exponentiations of split opfibrations behave with respect to the inverse Grothendieck construction exhibiting an opfibration as a functor to $\Cat$; while these constructions are relatively straightforward and in many cases understood by experts, we believe they provide useful intuition and were unable to find them in existing literature.

This result is similar in many ways to \cite[Theorem 4.28]{aggregation}, which proves that the bicategory $\mathbb{C}\mathbf{at}^\#$ (equivalent to $\Fam$) is equivalent to the bicategory $\mathbb{C}\mathbf{omod}(\mathbf{Poly})$ of comonoids and bicomodules in the category of polynomial functors, which are themselves familial endofunctors on $\Set$. This equivalence allows a familial monad to be defined as an endo-bicomodule on a polynomial comonad on $\Set$, such as in \cite[Example 3.6]{nerves} for the free symmetric monoidal category monad on graphs. Describing familial monads as polynomial bicomodules is often useful for applications (see for instance \cite{aggregation}) and proving formal properties (such as in \cite{nerves}, where Weber's nerve functor for algebras of a familial monad is shown to satisfy a universal property), but ultimately a completely rigorous approach to defining new examples and expressing general constructions requires the concrete data and equations of a higher category schema: what are the operations in a given type of higher category, what are their arities and output shapes, what are their units, and how do they compose with one another. The approach of this paper is to provide a framework for working with such concrete data, and our proof that $\Rep$ is equivalent to $\Fam$ is very different from the proof in \cite[Section 4]{aggregation}.

\subsection*{Organization}

In \cref{background_familial} we describe the 2-category $\Fam$ of presheaf categories, familial functors, and cartesian natural transformations and discuss well-known examples of familial monads.  In \cref{representability} we outline the proof of the biequivalence between familial representations and familial functors, and describe in more detail the resulting characterization of familial monads. In \cref{examples} we use our characterization to define monads adding degeneracies to presheaves over a direct category to form presheaves over a Reedy category, adding symmetries to presheaves over $\C$ to form presheaves over $\C G$ for a crossed $\C$-group $G$, and freely generating cubical $\omega$-categories.

The appendices complete the main proof using the theory of polynomials in $\Cat$. In \cref{grothendiecks} we recall the various notions of fibration in $\Cat$ and provide proofs of the pullback-stability, composability, and exponentiability of split opfibrations directly in terms of their classifying functors to $\Cat$. In \cref{polynomials} we introduce several special classes of polynomials in $\Cat$ and develop their relationship with familial representations and familial functors. In \cref{rep_is_bicategory} we define the bicategory structure $\Rep$ on familial representations, complete the proof that it is biequivalent to $\Fam$, and give the full list of equations satisfied by a formal monad in $\Rep$.

\subsection*{Notation and Terminology}

For a small category $\C$, we write $\ch$ for the category $\Set^{\C^{op}}$ of presheaves over $\C$. For $X$ a presheaf in $\ch$, we write $X_c$ for its set of ``$c$-cells'' and $X_i \colon X_c \to X_{c'}$ for its action on a morphism $i \colon c' \to c$ of $\C$. 

We write $\sint X$ for its category of elements, whose objects are pairs $(c \in \Ob(\C), x \in X_c)$, often abbreviated as simply $x$, and morphisms of the form $i_x \colon X_i(x) \to x$ for $i \colon c' \to c$ in $\C$ and $x \in X_c$.

We write $*$ for the terminal presheaf in $\ch$, and $\{*_c\}$ for its singleton set of $c$-cells for each $c \in \Ob(\C)$.

We say that a natural transformation between functors is \emph{cartesian} if all of its naturality squares are pullbacks.

We write $\nats$ for the set of natural numbers $0,1,2,...$, and $\uln$ for the set $\{1,...,n\}$, where $\ul{0}$ is the empty set.

We denote by $\Set$, $\Cat$, and $\CAT$ the categories of sets, small categories, and locally small categories respectively, where the latter two are often regarded as 2-categories.

%\subsection*{Acknowledgements}
%
%
%We would like to thank Inna Zakharevich for substantial expositional advice and Mark Weber for helpful correspondence.

\section{Familial Monads}\label{background_familial}

A functor from a category $\A$ to $\Set$ is representable if it is isomorphic to $\Hom_\A(A,-)$ for some object $A$ in $\A$, called a \emph{representation} of the functor. $\Set \cong \ch$ when $\C$ is the terminal category and an object $A$ in $\A$ is equivalently a functor from the terminal category to $\A$, which suggests how to define representable functors $\A \to \ch$ for general $\C$. Given a functor $S \colon \C \to \A$, we get a functor $\A \to \ch$ given by $c \mapsto \Hom(Sc,-)$. 

In algebra, one often encounters functors to $\Set$ which are not representable but instead disjoint unions of representables, represented by a family of objects instead of just one.

\begin{ex}
The free monoid functor $\Set \to \Set$ sends a set $X$ to $* \sqcup X \sqcup X^2 \sqcup X^3 \sqcup \cdots$. Each $X^n$ is isomorphic to the set of functions $\Hom_\Set(\uln,X)$, so this functor is represented by the family of sets $\{\uln\}_{n \in \nats}$. Each $\uln$ corresponds to the unique $n$-ary operation in a monoid; this operation has \emph{arity} $\uln$.
\end{ex}

In the free monoid example, the representation of the functor consists of the set $\nats$ and a functor $\nats \to \Set$, regarding $\nats$ as a discrete category. In higher dimensional algebra though, we encounter functors into $\ch$ more general than disjoint unions of representables.

\begin{ex}\label{cat_example}
When $\C = \G_1$, the category $0 \starrows 1$, $\ch$ is the category of directed graphs. For a graph $X$, $X_\sigma \colon X_1 \to X_0$ identifies the source vertex of each edge and $X_\tau$ identifies the target. We write $\atol{n}$ for the graph consisting of a single path of length $n$: $n+1$ vertices and $n$ successive edges connecting them, for $n \ge 0$. When $n=0,1$, these ``walking paths'' include the single vertex and the single edge graphs.

The free category functor $\ch \to \ch$ sends $X$ to the graph with the same vertices and an edge for every (finite, directed) path in $X$, including length 0 paths which consist of just a vertex. The set of paths in $X$ of fixed length $n$ is precisely $\Hom(\atol{n},X)$, and the set of all paths in $X$ is then
$$\coprod_{n \in \nats} \Hom(\atol{n},X).$$

However, the free category functor is not a disjoint union of representables, as paths of all lengths have the same original set of vertices as their sources and targets. But the vertex part of the functor is representable, as $X_0 \cong \Hom(\atol{0},X)$. This functor is then a disjoint union of representables only in each type of cell separately, and the structure maps are also representable: the source vertex of a length $n$ path is the first vertex in the path, and the function $\Hom(\atol{n},X) \to \Hom(\atol{0},X)$ identifying this source is represented by the map of graphs from $\atol{0}$ to $\atol{n}$ picking out the first vertex.

The data of the free category functor then amounts to the sets $\{\atol{n}\}_{n \in \nats}$ and $\{\atol{0}\}$ of graphs and the source/target maps from $\atol{0}$ to $\atol{n}$. This can be described as a functor $S \colon \C^{op} \to \Set$ sending $0$ to $\{0\}$ and sending $1$ to $\nats$, along with a functor $E \colon \sint S \to \ch$ sending $n$ to $\atol{n}$ and $\sigma,\tau \colon 0 \to n$ to the inclusions of the source and target vertices in $\atol{n}$.
\end{ex}

Functors $\ch \to \ch$ of this form are called \emph{familially representable}, or just \emph{familial}, and describe a wide variety of freely generated higher category structures, as we discuss below. The functor $S \colon \C^{op} \to \Set$ describes the operations which output each cell type in $\C$, and $E \colon \sint S \to \ch$ identifies the arity of each operation, which for the composition of $n$ arrows in a category is the graph $\atol{n}$. The equations such as unit and associativity laws for categories are then between operations of the same arity: the composite of an arrow with the composite of two more arrows, in either order, agrees with the operation composing three arrows all at once.

Our main result describes how to represent the data of a familial functor equipped with the structure of a cartesian monad, giving a direct and simplified method for defining new types of higher categories that fit this pattern.

\subsection{Familial Functors}

\begin{defn}
For $\C,\C'$ small categories, a \emph{familial representation from $\C'$ to $\C$} is a pair $(S,E)$ where
\begin{itemize}
	\item $S$ is a functor $\C^{op} \to \Set$
	\item $E$ is a functor $\sint S \to \chp$
\end{itemize}
Associated to $(S,E)$ is a functor $\Hse \colon \chp \to \ch$ given by, for $X$ a presheaf over $\C'$,
\begin{itemize}
	\item $\Hse(X)_c = \coprodl_{t \in Sc} \Hom_\chp(Et,X)$ for each $c \in \Ob(\C)$
	\item $\Hse(X)_i \colon \coprodl_{t \in Sc} \Hom_\chp(Et,X) \to \coprodl_{t' \in Sc'} \Hom_\chp(Et',X)$ for $i \colon c' \to c$ is given by
$$E(i_t)^\ast \colon \Hom_\chp(Et,X) \to \Hom_\chp \blp E(Si(t)),X \brp.$$  
\end{itemize}
We say that $t \in Sc$ is an \emph{operation} with \emph{output} $c$ and \emph{arity} $Et$, as every diagram of shape $Et$ in $X$ contributes a $c$-cell to $\Hse(X)$. Note that we treat $S$ as a functor rather than a presheaf over $\C$, as we prefer to think of presheaves as geometric objects while $S$ plays more of a bookkeeping role, tracking the relationships between the various operations. That said, as a presheaf $S$ is isomorphic to $\Hse(*)$.
\end{defn}

\begin{defn}\label{familial_functor}
A functor $T \colon \chp \to \ch$ is \emph{familially representable}, or simply \emph{familial}, if it is naturally isomorphic to a functor of the form $\Hse$, where $(S,E)$ is called a \emph{familial representation of $T$}.  We write $\Fam(\chp,\ch)$ for the category of familial functors from $\chp$ to $\ch$ and cartesian natural transformations between them.
\end{defn}

\begin{rem}%%edit bib to include journal refs%%Justify or change name familial representation
Familial functors into $\Set$ were introduced in \cite{DiersThesis} as ``locally representable functors''. They are first called ``familially representable'' in \cite{JohnsonWalters}. In \cite{leinster}, familially representable functors $\A \to \ch$ are defined using a slight variation on the notion of familial representations: instead of a pair $(S,E \colon \sint S \to \A)$, they are equivalently represented by a functor from $\C^{op}$ into a category of ``families'' of objects in $\A$. In \cite{WeberGen}, functors which admit ``strict generic factorizations'' are (nontrivially) equivalent to familial functors in the setting of presheaf categories, with $(S,E)$ called the ``spectrum'' and ``exponent'' of a ``parametric representation'' of a functor. In \cite{WeberPra}, these functors are called ``parametric right adjoints'' or ``p.r.a. functors.'' Most of this overlapping terminology remains in current use, so we choose ``familial functors'' and ``familial representations'' as the most suitable for our purposes.
\end{rem}

We now describe morphisms of familial representations, which will correspond precisely to cartesian natural transformations of the associated familial functors.

\begin{defn}
For $(S,E),(S',E')$ familial representations from $\C'$ to $\C$, a morphism $\phi \colon (S,E) \to (S',E')$ consists of a morphism $\phi^S \colon S \to S'$ in $\ch$ and a natural isomorphism $\phi^E$
\bctikz \sint S \ar{rr}{\sint \phi^S} \ar{dr}[swap]{E} \ar[Rightarrow, shorten=19, shift right=7]{rr}{\phi^E} & & \sint S' \ar{dl}{E'} \\ & \ch \ectikz
We write $\Rep(\C',\C)$ for the category of familial representations from $\C'$ to $\C$ and morphisms of this form between them.
\end{defn}

\begin{prop}\label{rep_local_equivalence}
For small categories $\C',\C$, the assignment $(S,E) \mapsto \Hse$ extends to an equivalence of categories $H \colon \Rep(\C',\C) \to \Fam(\chp,\ch)$.
\end{prop}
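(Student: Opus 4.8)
The plan is to prove that $H$ is essentially surjective, full, and faithful separately.

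**Essential surjectivity** is essentially the definition of familial functor: any $T\colon \chp \to \ch$ in $\Fam(\chp,\ch)$ is by \cref{familial_functor} naturally isomorphic to some $\Hse$, so $H$ hits it up to iso. The only minor point to check is that the isomorphism class is well-defined, i.e.\ that the codomain of $H$ is genuinely the category described; this is immediate.

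For **fullness and faithfulness**, I would first extract $(S,E)$ from $T = \Hse$ functorially in a way that lets me compare morphisms. The key observation is the formula $S \cong \Hse(*)$ noted in the excerpt, so $\phi^S$ is forced to be $H(\phi)$ evaluated at the terminal presheaf $*$; this already pins down $\phi^S$ from a natural transformation $\Hse \Rightarrow \Hsep$, giving faithfulness on the $S$-component. To recover $E$, I would use that $Et$ represents the functor $X \mapsto \{$elements of $\Hse(X)_c$ lying over $t \in Sc$ under the projection $\Hse(X)_c \to \Hse(*)_c = Sc\}$; more precisely, for fixed $c$ and $t \in Sc$, the sub-functor $\Hom_\chp(Et,-) \hookrightarrow \Hse(-)_c$ cut out by $t$ is representable with representing object $Et$, and a cartesian natural transformation $\alpha\colon \Hse \Rightarrow \Hsep$ restricts (by the pullback condition, since the fiber over $t$ maps to the fiber over $\alpha_*(t)$) to a natural transformation $\Hom_\chp(Et,-) \Rightarrow \Hom_\chp(E't',-)$, hence by Yoneda to a unique map $E't' \to Et$ — and one checks this is invertible, yielding the natural isomorphism $\phi^E$. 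Running this construction on morphisms and checking compatibility with the $\C$- and $\sint S$-functoriality gives a functor $\Fam(\chp,\ch) \to \Rep(\C',\C)$, and one verifies it is inverse to $H$ up to natural isomorphism.

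**The main obstacle** I expect is the bookkeeping around cartesianness: one must check that the pullback condition on naturality squares of $\alpha$ is exactly what forces the induced maps on fibers, and hence the maps $E't' \to Et$, to assemble into a morphism of familial representations — in particular that $\phi^E$ is natural with respect to morphisms $i_t$ in $\sint S$ and that $\phi^S$ is compatible with the $S$-actions. This is where the ``tedious naturality arguments'' the introduction warns about live, but for this local statement (fixed $\C',\C$, no monad structure) they are manageable: the cartesian squares of $\alpha$ over maps $i\colon c'\to c$ in $\C$ encode precisely the required coherence, and the fact that $\alpha$ is determined fiberwise makes faithfulness formal. One should also confirm that every morphism $\phi\colon (S,E)\to(S',E')$ does induce a \emph{cartesian} natural transformation $\Hse \Rightarrow \Hsep$ — this follows because $\Hse(X)_c \to Sc$ is the coproduct projection and base change along $\phi^S$ together with the isomorphisms $\phi^E$ identifies each fiber with the corresponding fiber of $\Hsep(X)_c$, so the naturality squares are disjoint unions of pullbacks, hence pullbacks.
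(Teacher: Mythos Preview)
Your proposal is correct and follows essentially the same approach as the paper: essential surjectivity by definition, then recovering $\phi^S$ as the component of $\psi$ at the terminal presheaf and using the cartesian pullback squares plus Yoneda to extract the fiberwise isomorphisms $\phi^E_t$. The paper organizes the fullness/faithfulness argument via an explicit inverse $H^{-1}$ on hom-sets rather than constructing a quasi-inverse functor, but the content is identical.
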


This is proven in \cite[Proposition 3.8]{Shapely} for the equivalent notion of pointwise familial functors, but for clarity we give a proof here as well. The ideas in this proof are closely related to \cite[Theorem 7.6]{WeberGen}.

\begin{proof}
The functor $H$ sends a morphism $\phi \colon (S,E) \to (S',E')$ to the natural transformation given on $c$-cells ($c \in \Ob(\C)$) by
$$\coprod_{t \in Sc} \Hom_\chp(Et,-) \to \coprod_{t' \in S'c} \Hom_\chp(E't',-)$$
mapping $\Hom_\chp(Et,-)$ to $\Hom_\chp(E'\phi^S(t),-)$ by precomposition with the isomorphism 
$$\phi^E_t \colon Et \cong E'\phi^S(t).$$
This assignment is natural in $c$ precisely because $\phi^E$ is natural in $t$.

$H$ is essentially surjective by \cref{familial_functor}, so it remains only to show it is fully faithful. First, we observe that given any cartesian natural transformation $\psi \colon \Hse \to \Hsep$ we have for each presheaf $X$ over $\C'$ the following diagram, natural in $c$, where the vertical maps are given on each component by postcomposition with the unique map $X \to *$, the upper square is a naturality pullback square, and $\psi_c$ is the unique map making the lower square commute.
\bctikz
\coprodl_{t \in Sc} \Hom_\chp(Et,X) \arrow{d}{} \arrow{r}{} \ar[phantom, shift right=3]{dr}[pos=-.2]{\lrcorner} & \coprodl_{t' \in S'c} \Hom_\chp(E't',X) \arrow{d}{} \\
\coprodl_{t \in Sc} \Hom_\chp(Et,*) \arrow{d}[swap]{\cong} \arrow{r}{} & \coprodl_{t' \in S'c} \Hom_\chp(E't',*) \arrow{d}{\cong}\\
Sc \arrow{r}{\psi_c} & S'c
\ectikz
As the composite square is also a pullback, the top map above restricts to an isomorphism $\Hom_\chp(Et,X) \to \Hom_\chp(E'\psi_c(t))$ for each $t \in Sc$, natural in $X$ and $t$ (as an object of $\sint S$). By the Yoneda lemma, this isomorphism must be given by precomposition with an isomorphism $\psi_t \colon E'\psi_c(t) \cong Et$, natural in $t$. We can then define a morphism of representations $H^{-1}(\psi) \colon (S,E) \to (S,E)$ with $H^{-1}(\psi)^S_c = \psi_c$ and $H^{-1}(\psi)^E_t = \psi_t^{-1}$, and it is straightforward to check that the assignments 
$$H \colon \Rep(\C',\C) \blp (S,E),(S',E') \brp \rightleftarrows \Fam(\chp,\ch) \blp \Hse,\Hsep \brp : H^{-1}$$
are inverse to one another, completing the proof that $H$ is fully faithful.
\end{proof}

We now describe how familial functors form a sub-2-category $\Fam$ of $\CAT$. 

\begin{prop}\label{fam_2cat}
Categories of the form $\ch$ for small $\C$, familial functors, and cartesian natural transformations form a 2-category.
\end{prop}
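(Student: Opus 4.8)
The plan is to verify the three defining pieces of a (strict) 2-category: (i) for each pair $\C,\C'$ there is a category with objects familial functors $\chp\to\ch$ and morphisms cartesian natural transformations; (ii) horizontal composition of familial functors is again familial, and likewise the Godement product (whiskering/horizontal composite) of cartesian natural transformations is cartesian; (iii) the usual associativity and unit coherences hold, which we get for free since everything is inherited from $\CAT$. So the real content is showing that familial functors and cartesian transformations are each closed under the relevant compositions, together with the observation that identities are of the required form.

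First I would check that $\Fam(\chp,\ch)$ as defined in \cref{familial_functor} is a genuine category: the identity functor on $\ch$ is familial (represented by $S = \ast$ constant at a singleton and $E$ the Yoneda embedding, so that $\Hse(X)_c \cong \Hom(y(c),X) \cong X_c$), the identity natural transformation is cartesian (its naturality squares are identities, hence pullbacks), and cartesian natural transformations compose: if $\alpha,\beta$ have pullback naturality squares then so does $\beta\alpha$ by the pasting lemma for pullbacks. Next, for horizontal composition of functors, I would show that if $T\colon\chpp\to\chp$ and $T'\colon\chp\to\ch$ are familial with representations $(S,E)$ and $(S',E')$, then $T'T$ is familial; the cleanest route is to compute $T'T(X)_c = \coprod_{t'\in S'c}\Hom_\chp(E't',TX)$ and expand each $\Hom_\chp(E't',TX)$ using that $TX$ is a familial functor applied to $X$, which exhibits such a hom-set as a coproduct over lifts of $E't'$ into $S$, i.e.\ over morphisms $E't'\to S$ in $\chp$, of $\Hom_{\chpp}(\colim E\circ(\text{lift}),X)$ — this is exactly the colimit-over-the-category-of-elements computation that underlies the $m(t,f)$ clause in \cref{monad_representation}, and it is also a standard fact (familial functors, equivalently p.r.a. functors, are closed under composition; see \cite{WeberPra}). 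I would cite that closure rather than redo the bookkeeping, since the appendices develop the polynomial machinery that makes it transparent.

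Then I would treat the 2-cells: given cartesian $\alpha\colon T_1\Rightarrow T_2$ between familial functors $\chp\to\ch$ and cartesian $\alpha'\colon T_1'\Rightarrow T_2'$ between familial functors $\chpp\to\chp$, I must show the horizontal composite $\alpha\ast\alpha'\colon T_1T_1'\Rightarrow T_2T_2'$ is cartesian. This factors as a whiskering on each side, so it suffices to check (a) whiskering a cartesian $\alpha$ by a functor $T_1'$ on the right, i.e.\ $\alpha_{T_1' X}$, stays cartesian — immediate, since the naturality squares of $\alpha T_1'$ at a morphism $f$ are just naturality squares of $\alpha$ at $T_1' f$, hence pullbacks; and (b) whiskering a cartesian $\alpha'$ by a familial functor $T_1$ on the left, i.e.\ $T_1\alpha'$, stays cartesian — this is where familiality is actually used, because $T_1$ need not preserve arbitrary pullbacks, but familial (p.r.a.) functors do preserve the pullbacks appearing in naturality squares of cartesian transformations (indeed p.r.a.\ functors preserve connected limits, in particular pullbacks). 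Once (a) and (b) hold, $\alpha\ast\alpha' = (T_2\alpha')\circ(\alpha_{T_1'})$ is a composite of cartesian transformations, hence cartesian.

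Finally, all the coherence axioms — associativity of horizontal composition, left/right unit laws, and the interchange law between horizontal and vertical composition of 2-cells — are inherited verbatim from the ambient 2-category $\CAT$, since $\Fam$ is being defined as a sub-2-category on a class of objects, 1-cells, and 2-cells closed under the operations; there is nothing to recheck. The main obstacle is the one genuinely nonformal point, namely that familial functors are closed under composition and preserve the pullbacks occurring in cartesian naturality squares; I would handle this by appeal to the known theory of p.r.a.\ functors (e.g.\ \cite{WeberPra}), or equivalently defer it to the polynomial-theoretic development in the appendices, rather than giving a hands-on diagram chase here.
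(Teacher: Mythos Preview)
Your proposal is correct and follows essentially the same approach as the paper: the identity is familial by Yoneda, composites of familial functors are familial via the explicit colimit formula (which the paper cites to \cite{Shapely} rather than \cite{WeberPra}), and closure of cartesian transformations under whiskering reduces to the fact that familial functors preserve pullbacks (the paper cites \cite[Theorem 8.1]{WeberGen} for this). The only difference is presentational: you spell out the whiskering decomposition of the horizontal composite of 2-cells, whereas the paper compresses this step to ``the corresponding properties of cartesian transformations follow immediately.''
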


\begin{proof}
It suffices to show that familial functors are closed under identities and composites; the corresponding properties of cartesian transformations follow immediately, noting that familial functors preserve pullbacks (\cite[Theorem 8.1]{WeberGen}).

That the identity functor is familial is a consequence of the Yoneda lemma, as for each $X$ in $\ch$ we have $X_c \cong \Hom(y(c),X)$, for each object $c$ in $\C$. Given two familial functors $F \colon \chp \to \ch$ and $G \colon \chpp \to \chp$ represented by $(S,E)$ and $(S',E')$ respectively, \cite[Propositions 3.11, 3.12]{Shapely} show that 
$$FG(X)_c \cong  \coprodl_{\substack{t \in Sc, \\ f \colon Et \to S'}} \Hom_\chpp \blp \colim_{x \colon y(c') \to Et} E'f(x), X \brp ,$$
which can also be shown directly using basic properties of limits and colimits.
\end{proof}

These representations for identities and composites are discussed in more detail in \cref{rep_is_bicategory}, where they are shown to form the identities and composites of the bicategory $\Rep$ whose morphism categories are given by $\Rep(\C',\C)$.

\begin{defn}\label{identity_familial}
The familial representation of $\id_\ch$ is given by $(S^0,E^0)$, where $S^0_c = \{*_c\}$ be the terminal functor and $E^0 \colon \sint S^0 \cong \C \atol{y} \ch$.
\end{defn}

\begin{defn}\label{familial_composite}
For $F,G$ familial functors as above represented by $(S,E),(S',E')$, the familial representation of $FG$ is given by $(SS',EE')$, where 
$$SS'_c = \coprod_{t \in Sc} \Hom_\chp(Et,S) \qquad \textrm{and} \qquad EE'(t,f) = \colim_{x \colon y(c') \to Et} E'f(x).$$
\end{defn}

\subsection{Examples of familial monads}

\begin{defn}
A \emph{familial monad} is a monad on a presheaf category $\ch$ whose functor part is familial and whose unit and multiplication transformations are cartesian.
\end{defn}

A familial monad is the same as a formal monad in the 2-category $\Fam$, and this description will facilitate our characterization of familial monads in \cref{monad_representation}.

Familial monads are of interest for their precision in describing algebraic structures on categories $\ch$ of presheaves with operations taking as input an ``arity diagram,'' such as strings of composable edges in a graph, and outputting a single cell, like the composite arrow in a category. These kinds of algebras include most familiar higher category structures, whose operations typically encode the structure of unit cells, composition of cells, or symmetries where the various ``sources'' and/or ``targets'' of a cell can be permuted to form a new cell.

The unit and multiplication transformations are expected to be cartesian to encode (as in the proof of \cref{rep_local_equivalence}) that the equations in these algebraic structures are always between operations with the same arity. For instance, associativity for categories asserts that any binary parenthesization of the same string of arrows has the same total composite; this is an equation between two potentially different operations with the same arity diagram.

\begin{ex}
\cite[Example C.3.3]{leinster} describes the familial structure of the free category monad on graphs, similar to \cref{cat_example} above. \cite[Proposition F.2.3]{leinster} shows that the free $n$-category monad is familial on $n$-globular sets, which are presheaves over $\G_n = 0 \starrows \cdots \starrows n$ with $\sigma \circ \sigma = \tau \circ \sigma$ and $\sigma \circ \tau = \tau \circ \tau$ at each level. The operations and arities in this case correspond to the $n$-dimensional free globular pasting diagrams, which generalize the strings $\atol{n}$ of composable arrows to diagrams of composable $n$-cells.
\end{ex}

\begin{ex}\label{multicats}
\cite[Example 2.14]{WeberPra} constructs the free symmetric multicategory monad on multigraphs. Multigraphs are presheaves over the category $\cat{M}$ with objects $0, (0,1), (1,1), (2,1),...$ and morphisms $\sigma_1,...,\sigma_n,\tau  \colon 0 \to (n,1)$ for each $n \ge 0$ with no nontrivial compositions. Cell diagrams $X$ over $\cat{M}$ look like graphs with vertices $X_0$ and $n$-to-1 edges $X_{(n,1)}$ with $n$ sources determined by $X_{\sigma_i}$ and a single target determined by $X_\tau$.

The familial functor of this monad has operations and arities corresponding to any ``tree-with-permutation,'' an arbitrarily large finite composition of these many-to-one edges given by gluing together a source vertex of one such ``multiedge'' to the target vertex of another, perhaps many times, and at the end permuting the remaining ``leaves'' of the tree, source vertices not connected to the target of another multiedge. These trees describe the possible composites of ``multiarrows'' in symmetric multicategories, which are precisely the algebras for the monad.

Similarly, \cite[Proposition 2.9]{Shapely} exhibits the free polycategory monad on polygraphs as familial, where polygraphs consist of vertices and $n$-to-$m$ edges for $n,m \in \nats$, and in a polycategory two such ``polyarrows'' can be composed along a single target vertex of the first shared with a single source vertex of the second, with suitable many-to-many analogues of unit and associativity axioms.
\end{ex}

\begin{ex}%ref for double categories?%use sigma, tau?
A double graph is a diagram of vertices, two distinct types of edges drawn as $\rightarrowtail$ (``horizontal edges'') and $\circ\!\!\to$ (``vertical edges''), and squares with one pair of parallel edges of each type as below left:
\bctikz 
\bullet \rar[tail] \dar["\circ" marking, pos=0] & \bullet \dar["\circ" marking, pos=0] \\ \bullet \rar[tail] & \bullet 
\ntikz
(\bullet) \rar[shift left=1]{s} \rar[shift right=1, swap]{t} \dar[shift left=1]{t} \dar[shift right=1, swap]{s} & (\rightarrowtail) \dar[shift left=1]{t^v} \dar[shift right=1, swap]{s^v}\\ (\circ\!\!\to) \rar[shift left=1]{s^h} \rar[shift right=1, swap]{t^h} & (\square)
\ectikz
Double graphs are precisely presheaves over the category above right with four objects corresponding to vertices, both types of edges, and squares, with relations $s^hs = s^vs$, $s^ht = t^vs$, $t^hs = s^vt$, $t^ht = t^vt$, or equivalently the cartesian product $\G_1 \times \G_1$.

Double categories are algebras for a familial monad on double graphs with a composition operation with arity each string of $n$ composable horizontal or vertical edges outputting an arrow of the same type, and on squares an operation with arity each $n \times m$ grid of squares ($n,m \in \nats$). Such an algebra has both its horizontal and vertical arrows form categories, and additionally has horizontal and vertical compositions of squares which satisfy the usual unit, associativity, and interchange equations as any composite of those operations must agree with the unique operation with the relevant grid as its arity. We describe how to formally specify the data of such a monad in \cref{monad_representation}, and discuss in \cref{cubical_omega_categories} how to do so for a similar monad to this one, for cubical presheaves with only one type of edge and cubes in arbitrarily high dimension.
\end{ex}

\section{Representability}\label{representability}

A familial representation reduces the data of a familial functor to the pair of $S$ which records the operations outputting cells of each type and $E$ which records the arities of those operations. In a familial monad, the unit specifies an ``identity operation'' ensuring that the functor preserves all of the existing cells of a presheaf, while the multiplication provides a means of composing the operations with each other in an appropriate sense. The goal of this section, and the paper as a whole, is to make this additional data precise in terms of the familial representation $(S,E)$.

First, we state and outline a proof of the general result that the 2-category $\Fam$ of familial functors is biequivalent to a bicategory $\Rep$ of familial representations. A direct proof of this statement would be straightforward but incredibly tedious, so we provide an alternative argument using categorical polynomials and Grothendieck constructions. The details of this proof are left to the appendices.

As a consequence, the equivalence $\Rep(\C,\C) \simeq \Fam(\ch,\ch)$ of the previous section is in fact an equivalence of monoidal categories, and the . Therefore to specify a monoid in $\Fam(\ch,\ch)$, which is precisely a familial monad, it suffices to produce a monoid in $\Rep(\C,\C)$, which induces a monad structure on the associated familial endofunctor. We describe precisely the data of such a \emph{monad representation}, and in the next section use this characterization of familial monads to describe new examples.

\subsection{The bicategory $\Rep$}

The equivalence of categories $H \colon \Rep(\C',\C) \to \Fam(\chp,\ch)$ in \cref{rep_local_equivalence}, ranging over any small categories $\C',\C$ and landing in the morphism categories of the 2-category $\Fam$, resembles the functors on morphism categories making up a bifunctor from a bicategory $\Rep$ with objects small categories and morphism categories given by $\Rep(\C',\C)$. Our main technical result is that this is in fact the case.

\begin{thm}\label{is_bicategory_equivalent}
There is a bicategory $\Rep$ with objects small categories, 1-cells familial representations, and 2-cells given by morphisms of representations, such that $H \colon \Rep \to \Fam$ sending a small category $\C$ to its presheaf category $\ch$ and acting on morphisms as described above is a biequivalence.
\end{thm}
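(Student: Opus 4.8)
\noindent The plan is to avoid constructing the bicategory $\Rep$ and verifying its coherence axioms by hand; instead I would realize familial representations as a class of polynomials in $\Cat$, so that the bicategorical structure on $\Rep$ and the pseudofunctoriality of $H$ are inherited from a known bicategory of polynomials.

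The first step is to set up a dictionary between familial representations and polynomial diagrams in $\Cat$. Given a familial representation $(S,E)$ from $\C'$ to $\C$, the Grothendieck construction produces the category of elements $\int S$ with its canonical projection $\int S \to \C$, and applying the category-of-elements construction fibrewise to $E \colon \int S \to \chp$ yields a category $\mathcal{E}$ equipped with a functor $\mathcal{E} \to \int S$ that is a split opfibration and a functor $\mathcal{E} \to \C'$ assembled from the discrete fibrations associated to the presheaves $Et$. This assigns to $(S,E)$ a polynomial
$$\C' \longleftarrow \mathcal{E} \longrightarrow \int S \longrightarrow \C$$
whose middle leg is a split opfibration and whose outer legs are discrete fibrations of the appropriate shape. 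I would then check that this assignment extends to an equivalence of hom-categories $\Rep(\C',\C) \simeq \Poly^{\mathrm{fam}}(\C',\C)$, where $\Poly^{\mathrm{fam}}$ denotes the full sub-hom-category of $\Poly$ on such ``familial'' polynomials, matching morphisms of representations with the evident morphisms of polynomials.

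The second step is to put a bicategory structure on these familial polynomials. Here the key input, established in the first appendix, is that split opfibrations in $\Cat$ are pullback-stable, closed under composition, and exponentiable, with an explicit description of the exponential in terms of classifying functors to $\Cat$. From these facts I would deduce that the class of familial polynomials is closed under the polynomial composition of the standard bicategory $\Poly$ of polynomials in $\Cat$ (Gambino--Kock, Weber): composing two such polynomials, the middle legs compose to a split opfibration and the outer legs remain of the required form. Transporting the bicategory structure of $\Poly$ along the equivalence of the previous step then gives $\Rep$ its bicategory structure, and I would check that the resulting composite of $(S,E)$ with $(S',E')$ is exactly the pair $(SS',EE')$ of \cref{familial_composite} and that the identity 1-cell is $(S^0,E^0)$ of \cref{identity_familial}, the colimit formula for $EE'$ being identified with the relevant instance of the explicit exponential.

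Finally, the polynomial-functor construction is a pseudofunctor from $\Poly$ to $\CAT$; restricted to familial polynomials it lands among presheaf categories and familial functors and, by the computation of the associated polynomial functor, agrees with $H$ composed with the equivalence $\Rep \simeq \Poly^{\mathrm{fam}}$. This exhibits $H$ as a pseudofunctor. Since $H$ is essentially surjective on objects (every $\ch$ is in its image by definition) and is an equivalence on each hom-category by \cref{rep_local_equivalence}, it is a biequivalence. The main obstacle is the second step: showing that the familial polynomials are closed under polynomial composition and identifying the abstract composite with the explicit formula of \cref{familial_composite}. This is exactly where the explicit treatment of exponentiability of split opfibrations and a somewhat delicate colimit computation are needed, which is why that part is deferred to the appendices.
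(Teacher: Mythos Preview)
Your overall strategy---realize familial representations as polynomials in $\Cat$ and inherit the bicategory structure---is exactly the paper's, but the second step contains a genuine error. Familial polynomials are \emph{not} closed under polynomial composition. When you compose $\g(S,E)$ with $\g(S',E')$, the fiber of the resulting opfibration over $(t,f)$ is the Grothendieck construction $\oint(E' \circ \sint f)$, which is a (non-discrete) fibration over $\C''$: it has nontrivial morphisms of the form $(i_x,\id)$ coming from the lax-colimit structure. So the composite is only quasi-familial, and the left leg fails to be a discrete fibration. Likewise the identity polynomial $\C \xleftarrow{\id} \C \xrightarrow{\id} \C \xrightarrow{\id} \C$ is not familial (the singleton fiber over $c$ is not a discrete fibration over $\C$), so $\Poly^{\mathrm{fam}}$ does not even contain identities. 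Thus you cannot simply transport the bicategory structure of $\Poly$ along an equivalence $\Rep \simeq \Poly^{\mathrm{fam}}$.

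The paper fixes this by working instead in the larger bicategory $\Poly^{vf}$ of very fibrous polynomials (which \emph{is} closed under identities and composition), and treating $\g \colon \Rep \to \Poly^{vf}$ as a \emph{colax} bifunctor: the identitor and productor are the non-invertible vertical morphisms $\epsilon$ and $\pi$ arising from familial replacement (taking fibrewise connected components to pass from the lax colimit $\oint(E'\circ\sint f)$ to the strict colimit $\colim(E'\circ\sint f)$). These colax structure maps are sent to isomorphisms by $P_d$, so the composite $P_d \circ \g$ is a genuine bifunctor landing in $\Fam$ and agreeing with $H$. Finally, since $H$ is faithful on hom-categories, the triangle and pentagon laws for $\Rep$ are reflected from those of $\Fam$ (the paper's \cref{reflects_triangle}). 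Your proposal needs both the passage to a larger class of polynomials with vertical morphisms and this colax-then-reflect maneuver; without them the argument does not go through.
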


\begin{proof}
It suffices to show that $\Rep$ is a bicategory and $H$ is a bifunctor, as $H$ is bijective on objects and an equivalence on morphism categories by \cref{rep_local_equivalence}.%cite or further explain this
To do so, we consider the diagram
$$\Rep \xrightarrow{\g} \Poly^{vf} \xrightarrow{P_d} \CAT,$$
where $P_d$ is a bifunctor (\cref{poly_bifunctor}) landing in familial functors between presheaf categories (but not necessarily cartesian transformations), $\g$ has the elements of a \emph{colax} bifunctor (\cref{is_colax}), and $\Poly^{vf}$ is the bicategory of \emph{very fibrous} categorical polynomials in $\Cat$, extending a sub-bicategory of Weber's categorical polynomials in $\Cat$ (\cite{WeberPoly}) to include the \emph{vertical} 2-cells introduced in \cite{GambinoKock}. 

We show (\cref{like_bifunctor}) that $P_d$ sends the colax structure maps of $\g$ to natural isomorphisms in $\Fam$, so that the composite $P_d \circ \g$ has the elements of a bifunctor. Furthermore, the composite (unlike $P_d$) lands in cartesian natural transformations (\cref{polynomials_cartesian}), so it factors through $\Fam$. The assignment $\Rep \to \Fam$ then sends  $\C$ to $\ch$ and agrees with $H$ on morphism categories (\cref{composes_to_H}), so $H$ has the elements of a bifunctor.

By ``elements of a (colax) bifunctor'', we mean that after describing the identities, composites, unitors, and associators of $\Rep$, but without directly proving that the triangle and pentagon laws hold, $\g$ and its colax structure maps are shown to satisfy the unitality and associativity equations of a colax bifunctor. The composite $H$ then has all of the elements of a bifunctor except for a complete proof that its domain $\Rep$ is a bicategory. To complete the proof, we recall (\cref{reflects_triangle}) that given such data with $H$ consisting of faithful functors on morphism categories, the triangle and pentagon equations for $\Rep$ can be deduced from those for $\Fam$ and the unitality and associativity equations for $H$.

\end{proof}

The appendices are devoting to defining $\Poly^{vf},P_d,\g$ and proving their properties asserted in this proof.

\subsection{Familial monad representations}

We have now established that $H \colon \Rep(\C,\C) \to \Fam(\ch,\ch)$ is a monoidal equivalence, which in particular induces an equivalence between the corresponding categories of monoids: %cite?

\begin{thm}\label{monad_representation}
For any small category $\C$, $H$ restricts to an equivalence between familial monads on $\ch$ and monoids in the monoidal category $\Rep(\C,\C)$.
\end{thm}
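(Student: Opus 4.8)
The plan is to leverage the monoidal equivalence $H \colon \Rep(\C,\C) \to \Fam(\ch,\ch)$ established by combining \cref{rep_local_equivalence} with \cref{is_bicategory_equivalent}. The key general fact is that an equivalence of monoidal categories induces an equivalence on the categories of monoids. So the argument is essentially a bookkeeping exercise: identify both sides with categories of monoids, and invoke this fact.

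First I would make precise the claim that $\Rep(\C,\C)$ is a monoidal category. By \cref{is_bicategory_equivalent}, $\Rep$ is a bicategory with $\C$ among its objects; the hom-category $\Rep(\C,\C)$ is therefore a monoidal category, with tensor product given by horizontal composition of $1$-cells (the composite $(SS', EE')$ of \cref{familial_composite}), unit object the identity representation $(S^0, E^0)$ of \cref{identity_familial}, and associator and unitors inherited from the bicategory structure of $\Rep$. Dually, $\Fam(\ch,\ch)$ is the hom-category $\Fam(\ch,\ch)$ of the $2$-category $\Fam$ (\cref{fam_2cat}), hence a strict monoidal category under composition of familial endofunctors, with unit $\id_\ch$. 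By \cref{is_bicategory_equivalent}, $H$ is a biequivalence, so its action $H \colon \Rep(\C,\C) \to \Fam(\ch,\ch)$ on hom-categories is a \emph{monoidal} equivalence: it is an equivalence of categories by \cref{rep_local_equivalence}, and it comes equipped with coherent natural isomorphisms $H(S^0,E^0) \cong \id_\ch$ and $H(SS',EE') \cong H(S,E)\circ H(S',E')$ arising from the structure isomorphisms of the bifunctor $H$, satisfying the hexagon/triangle axioms because $H$ is a bifunctor.

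Next I would recall the standard categorical fact: a (strong) monoidal functor $F \colon (\mathcal{M},\otimes,I) \to (\mathcal{N},\otimes,J)$ induces a functor $\mathrm{Mon}(F) \colon \mathrm{Mon}(\mathcal{M}) \to \mathrm{Mon}(\mathcal{N})$ on monoid objects, sending a monoid $(M,\mu,\eta)$ to $FM$ with multiplication $FM \otimes FM \cong F(M\otimes M) \xrightarrow{F\mu} FM$ and unit $J \cong FI \xrightarrow{F\eta} FM$; and if $F$ is moreover a monoidal equivalence then $\mathrm{Mon}(F)$ is an equivalence of categories, with inverse induced by a monoidal inverse of $F$. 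Applying this with $F = H$, and observing that a monoid in $\Fam(\ch,\ch)$ — i.e., a familial endofunctor $T$ together with associative, unital natural transformations $T\circ T \to T$ and $\id_\ch \to T$, all of which are automatically cartesian since $\Fam$ has only cartesian $2$-cells — is precisely a familial monad on $\ch$, we conclude that $H$ restricts to an equivalence between $\mathrm{Mon}(\Rep(\C,\C))$ and the category of familial monads. Finally I would note that, as $H^{-1}$ is likewise monoidal, the inverse equivalence sends a familial monad to its representation, giving the explicit tuple $(S,E,e,m)$ unpacked in \cref{rep_is_bicategory}.

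I expect the only nontrivial point to be verifying that $H$ really is a \emph{strong} monoidal functor with coherent structure isomorphisms — but this is exactly the content of $H$ being a bifunctor, already established in \cref{is_bicategory_equivalent}; once that is in hand, the present theorem is a formal consequence, and the bulk of the work has been pushed into the appendices. The remaining subtlety is purely that of matching definitions: confirming that a formal monad in the bicategory $\Rep$, when restricted to the object $\C$, is the same thing as a monoid in the monoidal hom-category $\Rep(\C,\C)$, which is immediate from the definition of formal monad.
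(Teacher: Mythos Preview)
Your proposal is correct and takes essentially the same approach as the paper: the paper derives the theorem in a single sentence, noting that the biequivalence of \cref{is_bicategory_equivalent} makes $H \colon \Rep(\C,\C) \to \Fam(\ch,\ch)$ a monoidal equivalence, which therefore induces an equivalence between the corresponding categories of monoids. Your write-up simply unpacks this inference more carefully, making explicit the identification of monoids in $\Fam(\ch,\ch)$ with familial monads and the standard fact that monoidal equivalences lift to monoid objects.
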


We can therefore define a familial monad by giving a monoid in $\Rep(\C,\C)$, which we call a \emph{familial monad representation} or (depending on the context) a \emph{higher category schema}. Using the definition of $\Rep$ in \cref{rep_is_bicategory}, a familial monad representation consists of the following:
\begin{itemize}
\item A functor $S \colon \C^{op} \to \Set$
\item A functor $E \colon \sint S \to \ch$
\end{itemize}
These alone define the functor $T = \Hse \colon \ch \to \ch$ by 
$$TX_c = \coprod_{t \in Sc} \Hom(Et,X).$$
\begin{itemize}
\item A map $e^S$ from $S^0 \colon c \mapsto \{*_c\}$ to $S$ with an isomorphism $e^E \colon E \sint e^S \to E^0$, which amounts to natural isomorphisms $Ee(*_c) \cong y(c)$
\end{itemize}
This provides the unit map 
$$\eta_X \colon X_c \cong \Hom \blp y(c),X \brp \cong \Hom \blp Ee^S(*_c),X \brp \emb \coprod_{t \in Sc} \Hom(Et,X) = TX_c.$$
\begin{itemize}
\item A map $m^S$ from $SS \colon c \mapsto \coprod_{t \in Sc} \Hom(Et,S)$ to $S$ along with an isomorphism ${m^E \colon E \sint m^S \to EE}$, which amounts to natural isomorphisms $Em(t,f) \cong \colim_{x \in Et} Ef(x)$
\end{itemize}
This provides the multiplication map
$$TTX_c \cong \coprod_{(t \in Sc, f \colon Et \to S)} \Hom \blp \colim_{x \in Et} Ef(x),X \brp \cong \coprod_{(t,f)} \Hom \blp Em^S(t,f),X \brp \to \coprod_{t' \in Sc} \Hom(Et',X) = TX_c.$$
\begin{itemize}
\item Commutativity of the following unitality and associativity diagrams: 
\bctikz
S^0S \arrow{dr}[swap]{\lambda^S} \arrow{r}{e^S \cdot \id} & SS \arrow{d}{m^S} \\
 & S
\ntikz
SS \arrow{d}[swap]{m^S} & SS^0 \arrow{dl}{\rho^S} \arrow{l}[swap]{\id \cdot e^S} \\
 S
\ntikz
(SS)S \arrow{rr}{\alpha^S} \arrow{d}[swap]{m \cdot \id} & & S(SS) \arrow{d}{\id \cdot m} \\
SS \arrow{dr}[swap]{m} & & SS \arrow{dl}{m} \\
& S
\ectikz
along with the corresponding diagrams of isomorphisms relating $e^E,m^E,\lambda^E,\rho^E,\alpha^E$, which for brevity we defer to \cref{rest_of_equations}.
Here $\lambda^S,\rho^S,\alpha^S$ are defined (in \cref{rep_is_bicategory}) by
$$\lambda^S \blp *_c,t \colon y(c) \to S \brp = t \in Sc, \qquad\qquad \rho^S \blp t, ! \colon Et \to S^0 \brp = t \in Sc,$$
$$\alpha^S \blp (t,f \colon Et \to S), F \colon \colim_{x \colon y(c') \to Et} Ef(x) \to S \brp = \blp t,G \colon Et \to SS \colon x \mapsto (f(x),F_x) \brp,$$
where $F_x$ is the composite $Ef(x) \to \colim_{x' \colon y(c') \to Et} Ef(x') \xrightarrow{F} S.$
\end{itemize}
These ensure that $(T,\eta,\mu)$ satisfy the unit and associativity laws.

\begin{rem}\label{rigid}
In practice, many examples have $E$ land in rigid diagrams, objects in $\ch$ with no nontrivial automorphisms, in which case all isomorphisms in the class of the representing diagrams $Et$ are then unique, so it suffices to define $e \colon S^0 \to S$, $m \colon SS \to S$ and check that $E \sint e \cong E^0$ and $E \sint m \cong EE$ as properties rather than structure, and check only the diagrams for the $S$-parts above.  This is the case for all of the examples we consider.
\end{rem}  

To illustrate the convenience of this characterization of familial monads, we define several interesting monads in this fashion that illustrate the common themes of familial monads freely adding cells to a diagram resembling units, composites, and/or symmetries built from the underlying cell shapes. They also provide interesting examples of the ``theory'' associated to a familial monad:

\begin{defn}[{\cite[Definition 4.4]{WeberPra}}]\label{theory}
Given a familial monad $T$ on $\ch$ with representation $(S,E)$, the corresponding \emph{theory} is the category $\Theta_T$ defined as the full subcategory of $alg(T)$ spanned by objects of the form $TEt$ for $t \in Sc$, $c$ an object of $\C$.
\end{defn} 

By Weber's Nerve Theorem (\cite[Theorem 4.10]{WeberPra}), there is always a fully faithful functor $alg(T) \to \hat{\Theta}_T$ sending $A$ to $N_TA \colon TEt \mapsto \Hom_{alg(A)}(TEt,A)$, and a diagram $X$ in $\hat{\Theta}_T$ is isomorphic to a nerve if and only if it is a \emph{$\Theta_T$-model}, meaning each $X_{TEt}$ is the limit of the diagram:
$$\sint Et \to \C \cong \sint S^0 \atol{\sint e^S} \sint S \atol{E} \Theta_T \atol{X} \Set.$$

\section{Examples of Familial Monad Representations}\label{examples}

\cref{monad_representation} reduces the task of defining a familial monad to specifying its operations, their arities, how these relate to one another, the identity operations, and how operations compose. We demonstrate the convenience of this characterization with several less familiar examples exhibiting the three most common types of algebraic structure: units, symmetries, and compositions.

\subsection{Adding Degeneracies}

In most cases of interest, the category $\C$ of cell shapes is made up only of morphisms from ``lower dimensional'' to ``higher dimensional'' cell shapes, resembling inclusions into each shape of its lower dimensional faces.

\begin{defn}
A category $\C$ is \emph{direct} if it admits an identity-reflecting functor to the linear order $\cat{Ord}$ of ordinals regarded as a category. Concretely, this amounts to a ``degree'' function from $\Ob(\C)$ to ordinals such that each morphism in $\C$ strictly raises degree.
\end{defn}

When $\C$ is direct and all of the degrees are finite, the degree functor can always be taken to send each object $c$ to the length $n$ of the longest string of nontrivial composable morphisms $c_0 \to c_1 \to \cdots \to c_{n-1} \to c_n = c$ in $\C$.

\begin{ex}%reconsider i notation
The \emph{semicube category}, which we denote $\square_\d$, is the free monoidal category generated by a single object $\s^1$ and two maps $\d_0,\d_1 \colon \s^0 \to \s^1$ where $\s^0$ is the monoidal unit. We further denote $\s^n := \s^1 \otimes \scdots{n} \otimes \s^1$,
and 
$$\d^n_{i,\eps} := \id_1 \otimes \scdots{i} \otimes \id_1 \otimes \d_\eps \otimes \id_1 \otimes \scdots{n-1-i} \otimes \id_1 \colon \s^n \to \s^{n+1}$$
for $i=1,...,n$, $\eps = 0,1$. These maps are generators of $\square_\d$. 

$\s^n$ can be thought of as an $n$-dimensional cube with $\d^n_{i,0},\d^n_{i,1}$ respectively its front and back $(n-1)$-dimensional faces in the $i$th direction, where the edges of the cube are directed from front to back. We will also write $\s^n$ for the semicubical set represented by $\s^n$.

$\square_\d$ is direct, with degree functor sending $\s^n$ to $n$ and all of the morphisms inclusions of faces from a lower dimensional cube to a higher dimensional one.
\end{ex}

Direct categories are most often discussed in the context of Reedy categories.%cite?

\begin{defn}
A Reedy category is a category $\R$ equipped with wide subcategories $\R_+,\R_-$ such that $\R_+$ is direct, the morphisms in $\R_-$ are strictly degree-lowering, and every morphism factors uniquely as a morphism in $\R_-$ followed by a morphism in $\R_+$.
\end{defn}

\begin{ex}\label{cube_degeneracies}
The \emph{cube category} $\square_{\d\sigma}$ can be defined as the free monoidal category generated by objects $\s^0,\s^1$ (with $\s^0$ the monoidal unit), morphisms $\d_0,\d_1$ as above, and $\sigma \colon \s^1 \to \s^0$ with $\sigma\d_0 = \sigma\d_1 = \id_{\s^0}$. $\square_{\d\sigma}$ is generated by the maps $\d^n_{i,\eps}$ as above along with ``degeneracy maps''
$$\sigma^n_i  := \id_1 \otimes \scdots{i-1} \sigma \scdots{n-i-2} \id_1 \colon \s^n \to \s^{n-1}$$
satisfying appropriate relations, which can be found in \cite[Equation 2.1]{cubicalcategories}.

$\square_{\d\sigma}$ is a Reedy category with $\R_+$ the subcategory $\square_\d$ and $\R_-$ the subcategory generated by the maps $\sigma^n_i$. As any map between $\s^0$ and/or $\s^1$ factors uniquely as a map in $\R_-$ followed by a map in $\R_+$ (either potentially an identity), the same is true for monoidal products of these maps, which can be factored the same way in each component. %explain more?
\end{ex}

Our characterization of familial monads allows us to show the following by a simple construction, showing in particular that there is a monad on semicubical sets which freely adds in the degenerate cubes of a cubical set (presheaf on $\square_{\d\sigma}$), whose algebras are cubical sets and whose theory category is $\square_{\d\sigma}$.

\begin{prop}\label{reedy_monad}
For any Reedy category $\R$, $\rh$ is equivalent to the category of algebras for a familial monad on $\rhp$, which has $\R$ as its theory category.
\end{prop}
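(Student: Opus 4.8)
The plan is to apply the characterization of familial monads from \cref{monad_representation}: I will exhibit a familial monad representation $(S, E, e^S, m^S)$ on $\rhp := \Set^{\R_+^{op}}$ whose associated functor $T$ freely adds degeneracies, and then identify $alg(T)$ with $\rh$ and the theory $\Theta_T$ with $\R$. Following \cref{rigid}, it suffices to define $S$, $E$, $e^S$, $m^S$ on the level of $S$ (checking the $E$-side conditions as properties) provided the diagrams $Et$ are rigid, which I expect to hold since $\R_-$ consists of degree-lowering maps with no nontrivial automorphisms over a fixed object. The natural guess is: for $c \in \Ob(\R)$, set $Sc = \{ i \colon c \to c' \text{ in } \R_- \}$ (the degeneracy maps out of $c$, including the identity), functorially in $\R_+^{op}$ via the unique Reedy factorization of a composite $c'' \xrightarrow{\text{in }\R_+} c \xrightarrow{i} c'$ as an $\R_-$-map followed by an $\R_+$-map. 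For $i \colon c \to c'$ in $Sc$, let $Et := y_{\R_+}(c')$, the representable semi-presheaf on the codomain. Then $TX_c = \coprod_{i \in Sc}\Hom_{\rhp}(y_{\R_+}(c'), X) \cong \coprod_{i \colon c \to c' \in \R_-} X_{c'}$, which is exactly the set of cells of the "free cubical-type set on $X$" — a $c$-cell of $\R X$ is a $c'$-cell of $X$ together with a surjection (degeneracy) $c \to c'$.

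The identity operation $e^S$ picks out $\id_c \in Sc$, and the required isomorphism $Ee^S(*_c) = y_{\R_+}(c) \cong y(c)$ is immediate. For the multiplication $m^S$: an element of $SS_c$ is a pair $(i \colon c \to c', f \colon y_{\R_+}(c') \to S)$; by Yoneda $f$ is an element of $Sc'$, i.e.\ an $\R_-$-map $j \colon c' \to c''$, and I send $(i, j)$ to the composite $ji \colon c \to c''$ in $\R_-$ (well-defined since $\R_-$ is a subcategory). The required isomorphism $Em^S(i,j) = y_{\R_+}(c'') \cong \colim_{x \in Et} Ef(x)$ reduces, via the density/Yoneda computation, to the fact that $\colim_{x \colon y_{\R_+}(c'') \to y_{\R_+}(c')} y_{\R_+}(c'') = y_{\R_+}(c')$ — i.e.\ the colimit of a representable's category of elements is the representable — composed with the identification of $f$ with $j$. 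The unitality and associativity diagrams on the $S$-side then follow from associativity and unitality of composition in the category $\R_-$, which is routine. Establishing rigidity of the $Et = y_{\R_+}(c')$ and the naturality of $S$, $E$ under the Reedy factorization is the one place requiring genuine care; this is where the Reedy axioms (unique factorization, $\R_+$ direct) get used, and I expect it to be the main obstacle, though a manageable one.

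Finally I must identify the algebras and the theory. An algebra structure $TX \to X$ supplies, for each degeneracy $i \colon c \to c'$, a map $X_{c'} \to X_c$; the unit law forces $\id_c$ to act as the identity and the associativity law forces these to compose functorially, so an algebra is precisely a functor $\R_-^{op} \times_{?} \to \Set$ extending $X$ — more precisely, since every map of $\R$ factors uniquely as $\R_-$ then $\R_+$, an $\R_+$-presheaf with compatible $\R_-$-coactions is exactly an $\R$-presheaf, giving $alg(T) \simeq \rh$. For the theory: $\Theta_T$ is the full subcategory of $alg(T) \simeq \rh$ on the objects $TEt = T y_{\R_+}(c')$. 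One computes $T y_{\R_+}(c') \cong$ the $\R$-presheaf represented by $c'$ (the free algebra on a representable is representable), so $\Theta_T$ is the full subcategory of $\rh$ on representables, which is $\R$ itself by the Yoneda embedding. I would phrase these last identifications using \cref{theory} and Weber's Nerve Theorem as quoted, noting that $\rh \to \hat{\R}$ being the nerve recovers the standard fact that $\R$-presheaves are the $\Theta_T$-models. Checking that $T y_{\R_+}(c')$ is the representable $\R$-presheaf — unwinding $T$ applied to a representable through the colimit formula — is the other computation needing a few lines, but it is forced by the construction.
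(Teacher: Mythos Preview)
Your proposal is correct and follows exactly the paper's approach: the same $(S,E,e,m)$, the same use of the Reedy factorization for functoriality of $S$ and $E$, rigidity via directness of $\R_+$, and the same identification of algebras and theory. One minor correction: your justification of $Em^S(i,j) \cong \colim_{x \in Et} Ef(x)$ via co-Yoneda is not quite right as written, since $Ef(x)$ is $y_{\R_+}(d')$ for $d'$ the middle object of a Reedy factorization rather than $y_{\R_+}(\text{source of }x)$; the actual (simpler) reason is that the indexing category $\sint y_{\R_+}(c') \cong \R_+/c'$ has terminal object $\id_{c'}$, and the value of $Ef$ there is $y_{\R_+}(c'')$.
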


\begin{proof}
Let $\R$ be a Reedy category. We define a familial representation $(S,E)$ over $\R_+$ as follows:
\begin{itemize}
	\item $Sc = \coprodl_{b \in \Ob(\R)} \Hom_{\R_-}(c,b)$ for $c \in \Ob(\R)$
	\item For $i \colon c' \to c$ in $\R_+$ and $t \colon c \to b$ in $Sc$, $ti$ factors uniquely as $i't' \colon c' \to b' \to b$ with $i'$ in $\R_+$ and $t'$ in $\R_-$. We then set $(Si)(t) = t'$
	\item For $t \colon c \to b$ in $Sc$, $Et = y(b)$
	\item For $t,i,t',i'$ as above, $Ei_t$ is given by $y(i') \colon y(b') \to y(b)$
\end{itemize}

The induced familial endofunctor on $\rhp$ sends $X$ to $TX$ where 
$$TX_c = \coprod_{b \in \Ob(\R)} \Hom_{\R_-}(c,b) \times X_b,$$
adding a ``degenerate'' $c$-cell for each $b$-cell and degree-lowering map $t \colon c \to b$. In the example above, this amounts to adding for each $n$-cube a degenerate $m$-cube for each projection from the $m$-cube to the $n$-cube.%say more?

As $\R_+$ has no nontrivial isomorphisms as a direct category, each presheaf $y(b)$ is rigid, so following \cref{rigid} we can specify a monoidal structure on $(S,E)$ as follows:
\begin{itemize}
	\item $e \colon S^0 \to S$ sends $*_c$ to $\id \colon c \to c$ for each $c \in \Ob(\C)$
	\item $SSc \cong \coprodl_{t \colon c \to b} Sb$, and $m \colon SS \to S$ sends $(t \colon c \to b, t' \colon b \to a) \in SSc$ to $(t't \colon c \to a) \in Sc$
	\item Clearly $Ee(*_c) \cong y(c)$ and $Em(t \colon c \to b, t' \colon b \to a) = y(a) \cong \colim_{i \colon y(c') \to y(b)} E(Si(t'))$
\end{itemize}

An algebra of $T$ is a presheaf $A$ in $\rhp$ along with a map $TA \to A$, which amounts to functions $A_t \colon A_b \to A_c$ for each $t \colon c \to b$ in $\R_-$ such that:
\begin{itemize}
	\item $A_{\id} \colon A_c \to A_c$ is the identity (unit law) 
	\item $A_t \circ A_{t'} = A_{t't}$ (multiplication law)
	\item for $i \colon c' \to c$ in $\R_+$ $A_iA_t = A_{t'}A_{i'}$ for $i',t'$ as described above (naturality of algebra structure map)
\end{itemize}
This is precisely the data of a presheaf over $\R$, and a map of algebras corresponds similarly to a morphism in $\rh$.

The theory category $\Theta_T$ is the full subcategory of $\rh$ consisting of the free algebras on the representable presheaves $y(d)$ of $\rhp$. $Ty(d)$ is simply the cells of $y(d)$ with a $c$-cell added for each pair of a non-identity arrow $c \to b$ in $\R_-$ and an arrow $i \colon b \to d$ in $\R_+$, which correspond to the morphisms $c \to d$ in $\R$ that don't come from $\R_+$. $Ty(d)$ is therefore the representable presheaf $y(d)$ in $\rh$, so $\Theta_T$ agrees with $\R$ as the full subcategory of representables in $\rh$.
\end{proof}

\begin{ex}
Perhaps the most famous Reedy category is the simplex category $\Delta$, whose direct subcategory contains only the face maps between simplices. Presheaves on this subcategory are called semisimplicial sets, and \cref{reedy_monad} shows that there is a familial monad on semisimplicial sets which adds in the degeneracies needed to form a simplicial set, with $\Delta$ as its theory.
\end{ex}

\begin{ex}
A much simpler example comes from the Reedy category $\G_{1,r}$, with $\G_1 = 0 \starrows 1$ as its direct subcategory and a single map $\epsilon \colon 1 \to 0$ as $\R_-$, with $\epsilon \circ \sigma = \epsilon \circ \tau = \id_0$. Here \cref{reedy_monad} produces the monad on graphs adding a new self-loop to every vertex, whose algebras are reflexive graphs.
\end{ex}

\begin{rem}\label{factorization_systems}
While presheaves on direct categories commonly form the data of higher categorical structures, this construction in fact applies much more broadly. The degree raising and lowering properties of the factorization system in a Reedy category was never used in the construction, which therefore shows that for any category $\C$ with wide subcategories $\C',\C''$ such that each morphism factors uniquely as a map in $\C''$ followed by a map in $\C'$, there is a familial monad on $\chp$ whose algebras are presheaves over $\C$ and with $\C$ as its theory category.

In the reverse direction, starting with a familial monad $T$ on $\ch$ there is an ``active-inert'' factorization system on $\Theta_T$ such that the ``inert'' maps are precisely those arising from maps between the arity presheaves in $\ch$. Writing $\Theta_0$ for this subcategory, $\Theta_T$ is then also the theory of a familial monad on $\Thoh$, as discussed in \cite[Lemma 4.5]{WeberPra}.
\end{rem}

\subsection{Adding Symmetries}

In types of higher categories with particularly symmetric cell shapes, cells are often equipped with reflected or permuted versions of themselves. %We illustrate this with the examples of symmetric cubical sets and symmetric multicategories.

\begin{ex}
The \emph{cubical nerve} of a category $\C$ is the cubical set with $n$-cubes the commutative $n$-dimensional cube diagrams in $\C$, faces given by restriction to the appropriate lower dimensional subcubes, and degeneracies given by inserting identities in the appropriate direction. For any square as below left in $\C$, there is also a square as below right with the directions swapped.
\bctikz \cdot \rar{f} \dar[swap]{g} & \cdot \dar{h} \\ \cdot \rar[swap]{k} & \cdot 
\ntikz \cdot \rar{g} \dar[swap]{f} & \cdot \dar{k} \\ \cdot \rar[swap]{h} & \cdot \ectikz

This type of operation in a cubical set is called a \emph{symmetry}: a single cell is sent to another cell of the same shape with its faces permuted in some way. $n$-dimensional cubes in a cubical nerve admit symmetries for any permutation of the $n$ directions. Symmetries can be added as automorphisms of the objects in any variety of cube category: the semicube category, the cube category, even the category $\STheta$ described below which incorporates composites of cubical cells. 
\end{ex}

Symmetries of various sorts are also found in other types of higher categories. For instance, in a symmetric multicategory any $n$-to-1 arrow is equipped with additional $n$-to-1 arrows for every permutation of the inputs. Much like the familial monads discussed above for adding degenerate cells, symmetries can be freely added to a presheaf on $\C$ by a familial monad. The corresponding theory category is then constructed by adding new automorphisms to the objects of $\C$.

\begin{defn}[{\cite[Definition 2.1, 2.3]{generalizedreedy}}]%check notation
For $\C$ a small category, a \emph{crossed $\C$-group} is a functor $G \colon \C^{op} \to \Set$ equipped with a group structure on each set $Gc$ denoted by $(\cdot,e_c)$ and a left $Gc$-action on each set $\Hom_\C(c',c)$ denoted by $(-)_\ast$, such that for each $g,h \in Gc$ and $i \colon c' \to c$, $i' \colon c'' \to c'$ in $\C$:
\begin{enumerate}[a)]
	\item $g_\ast(i \circ i') = g_\ast(i) \circ (Gi(g))_\ast(i')$
	\item $g_\ast(\id_c) = \id_c$
	\item $Gi(g \cdot h) = (Gh_\ast(i))(g) \cdot Gi(h)$
	\item $Gi(e_c) = e_{c'}$
\end{enumerate}

The \emph{total category} $\C G$ has the same objects as $\C$, with morphisms $c' \to c$ of the form $(i,g)$ where $i : c' \to c$ in $\C$ and $g \in Gc'$. Identities are of the form $(\id_c, e_c)$ and composition is given by $(i,g) \circ (i',h) = (i \circ g_\ast(i'), Gi'(g) \cdot h)$.
\end{defn}

Crossed $\C$-groups are precisely the structure needed to describe a system of symmetries that can be freely added to a presheaf on $\C$ by a familial monad, and the theory category for this monad is given by the total category.

\begin{prop}\label{symmetries_monad}
For $G$ a crossed $\C$-group, $\widehat{\C G}$ is equivalent to the category of algebras for a familial monad on $\ch$, which has $\C G$ as its theory category.
\end{prop}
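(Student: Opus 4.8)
The plan is to mirror the proof of \cref{reedy_monad}, with the crossed group structure playing the role that the factorization system played there. First I would define a familial representation $(S,E)$ over $\C$ by $Sc = Gc$, with $Si = Gi \colon Gc \to Gc'$ for $i \colon c' \to c$, and with $Et = y(c)$ for every $t \in Gc$; the morphism $i_t \colon Si(t) \to t$ of $\sint S$ is sent to $y(g_\ast(i)) \colon y(c') \to y(c)$ where $t = g$. Functoriality of $E$ on $\sint S$ is then exactly crossed group axiom (a) (together with (b) for identities), so $T := \Hse$ is well defined, with $TX_c = Gc \times X_c$: each cell acquires a permuted copy of itself for every element of its symmetry group, just as the cubical nerve of a category acquires symmetries of its cubes.

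Next I would put a monoid structure on $(S,E)$. In the principal examples $\C$ has no nontrivial automorphisms, so each $y(c)$ is rigid and, by \cref{rigid}, it suffices to give $e^S, m^S$ and check the $S$-part equations. I would take $e^S(*_c) = e_c$ and, using the identification $SSc \cong \coprodl_{g \in Gc} \Hom(y(c),S) \cong Gc \times Gc$, let $m^S \colon Gc \times Gc \to Gc$ be the group multiplication. One has $Ee^S(*_c) = y(c)$ on the nose and $Em^S(g,h) = y(c) \cong \colim_{x \colon y(c') \to y(c)} y(c')$ by density of representables, and the unitality and associativity diagrams for the $S$-parts unwind to precisely the left and right unit laws and the associativity law of the group structures $(Gc,\cdot,e_c)$. (For fully general $\C$ one additionally records the coherence isomorphisms $e^E, m^E$ from the evident density isomorphisms and verifies the corresponding $E$-part diagrams; these follow from naturality of the co-Yoneda isomorphism and introduce no new content.)

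With $T$ in hand, I would identify its algebras and its theory. Unwinding a structure map $TA \to A$ exhibits a $T$-algebra as a presheaf $A$ over $\C$ equipped with a $Gc$-action on each $A_c$ subject to $Gi(g) \cdot A_{g_\ast(i)}(a) = A_i(g \cdot a)$ for every $i \colon c' \to c$, the coherence of these identities under composition in $\C$ being governed by axioms (a)--(d). After the standard translation between such actions and presheaf restrictions (which replaces $g$ by $g^{-1}$ on each $Gc$), this is exactly the data of a presheaf over the total category $\C G$, whose composition $(i,g)\circ(i',h) = (i\circ g_\ast(i'),\, Gi'(g)\cdot h)$ is precisely what converts the displayed compatibility into functoriality; since morphisms of algebras correspond to natural transformations of $\C G$-presheaves, $alg(T) \simeq \widehat{\C G}$. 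For the theory, $TEt = Ty(d)$ has $c$-cells $Gc \times \Hom_\C(c,d) = \Hom_{\C G}(c,d)$, and a direct check shows its $\C$-presheaf structure (hence its algebra structure) is that of the representable presheaf on $\C G$ at $d$; so the free algebras $TEt$ are exactly the representables of $\widehat{\C G}$, and $\Theta_T$, the full subcategory of $alg(T)$ they span, is $\C G$ by the Yoneda lemma for $\C G$.

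The hard part will be the translation in the previous paragraph: disentangling the twisted naturality of a $G$-action on a $\C$-presheaf into the composition law of $\C G$ forces all four crossed group axioms into play and requires care with the left/right and contravariance conventions. Everything else --- functoriality of $E$, the monoid equations, and the theory computation --- should reduce mechanically to the crossed group axioms and the density of representables, so I do not expect those steps to present real difficulty.
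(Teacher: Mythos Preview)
Your direct construction is correct and matches the representation the paper describes at the end of its proof, but the paper arrives there by a much shorter route. Rather than building $(S,E,e,m)$ from scratch and verifying the monoid laws, the paper observes that $\C G$ carries a unique factorization system: every $(i,g)$ factors as $(i,e_{c'}) \circ (\id_{c'},g)$, with the two wide subcategories $\{(i,e_{c'})\} \cong \C$ and $\{(\id_c,g)\}$. It then simply invokes \cref{factorization_systems}, the generalization of \cref{reedy_monad} to arbitrary unique factorization systems, which immediately yields the familial monad, the algebra identification, and the theory computation all at once. Your approach is precisely what that remark produces when specialized to this factorization, so the two proofs agree on the underlying monad; yours has the advantage of making the role of each crossed group axiom visible, while the paper's buys brevity by reusing machinery already in place.

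One small point to watch in your direct argument: when you say ``the unitality and associativity diagrams for the $S$-parts unwind to the group laws,'' do not forget that $m^S$ must first be a morphism of presheaves. The action of $SS$ on $i \colon c' \to c$ is twisted---$SSi(g,h) = (Gi(g),\, G(g_\ast i)(h))$ because $E(i_g) = y(g_\ast i)$---so naturality of $m^S$ is exactly axiom~(c), and forces the multiplication to be $(g,h) \mapsto h \cdot g$ rather than $g \cdot h$. You allude to axioms (a)--(d) only in the algebra identification step, but (c) and (d) already enter earlier, in making $e^S$ and $m^S$ natural.
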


\begin{proof}
$\C G$ has a unique factorization system given by the subcategory of morphisms of the form $(i \colon c' \to c, e_{c'})$, which is isomorphic to $\C$, and the subcategory of morphisms of the form $(id_c, g)$. In particular, any morphism $(i \colon c' \to c,g)$ factors as $(i,e_{c'}) \circ (\id_{c'},g)$. The result then follows from \cref{factorization_systems}. 

Concretely, the representation of the monad has $S = G$ and $E$ sends $g \in Gc$ to $y(c)$ with its faces permuted by $g_\ast$. The unit and multiplication arise from the unit and multiplication in the groups $Gc$.
\end{proof}

\begin{ex}
Symmetries in cubical sets are described by the crossed cubical group $G \colon \square \to \Set$ defined as follows:
\begin{itemize}
	\item $G\s^n  = \Sigma_n$, the permutation group on $n$ elements
	\item $G\d_{i,\eps} \colon \Sigma_n \to \Sigma_{n-1}$ sends a permutation $\gamma$ on $\{1,...,n\}$ to the permutation on $\{1,...,n-1\}$ given by removing $i$ from the domain of $\gamma$ and reordering (e.g. $G\d_{2,\eps}(231) = (21)$ by removing the 2 and reordering) 
	\item $G\sigma_i \colon \Sigma_n \to \Sigma_{n+1}$ sends $\gamma$ to the permutation on $\{1,...,n+1\} \cong \{1,...,i,i',i+1,...,n\}$ treating $i,i'$ as a single element (e.g. $G\sigma_2(321) = (322'1) = (4231)$ by moving 2 and 2' together and then relabeling)
	\item $\gamma_\ast(\d_{i,\eps}) = \d_{\gamma(i),\eps}$ and $\gamma_\ast(\sigma_i) = \sigma_{\gamma(i)}$. This is how each symmetry permutes the faces (and degeneracies) of a cube
	\item It is straightforward to check that these satisfy the axioms of a crossed cubical group
\end{itemize}

This monad takes a cubical set $X$ and adds in a new $n$-cube for each permutation of the dimensions of each cube in $X_n$. An algebra $A$ for this monad is equipped with a choice of these symmetries: for each $\gamma \in \Sigma_n$ and each $n$-cube $a \in A_n$, a choice of cube $\gamma(a) \in A_n$ whose faces and degeneracies are those of $a$ permuted by $\gamma$. This is precisely a symmetric cubical set (in the language of \cite{cubicalvarieties}, a presheaf on the symmetric cube category $\mathbb{C}_{(we,\cdot)}$). Symmetries can be similarly added to any other type of cubical sets that doesn't already have them.

There is also a crossed cubical group with $G\s^n = \{1,\tau\}^n$, where each $\tau$ in the $i$th position reverses the source and target faces of the $n$-cube in the $i$th direction (as in, swaps $\d_{i,0},\d_{i,1}$ and does the same for all maps that factor through them). Algebras for the associated familial monad are cubical sets with reversals (in \cite{cubicalvarieties}, presheaves on $\mathbb{C}_{(w,')}$). 
\end{ex}

\begin{ex}
Symmetries are also often considered for permuting the sources of many-to-one arrows, such as in a multicategory. For the category $\cat{M}$ of the vertex and $n$-to-1 arrows in \cref{multicats}, a crossed $\cat{M}$-group can be defined with $G0$ the trivial group, $G(n,1) = \Sigma_n$, and each $\gamma \in \Sigma_n$ permuting the $n$ different source maps $0 \to (n,1)$. Algebras for this monad are symmetric multigraphs: multigraphs equipped with, for each $n$-to-1 edge and each permutation $\gamma$, a choice of $n$-to-1 edge with its sources permuted by $\gamma$ (a restriction of the symmetric polygraphs discussed in \cite[Section 2.4]{Shapely}). 

Dendroidal sets \cite{dendroidal} are presheaves over the tree category $\Omega$, which is the theory category for the free symmetric multicategory monad on multigraphs \cite[Example 4.19]{WeberPra}. There is similarly a planar tree category $\Omega_{planar}$ which is the theory category for the free non-symmetric multicategory monad. The above crossed group structure for permutations of sources in an $n$-to-1 arrow extends to a crossed $\Omega_{planar}$-group sending each tree of many-to-one arrows to its group of planar rearrangements, whose total category is equivalent to $\Omega$ \cite[Example 2.8]{generalizedreedy}. Hence denrdoidal sets are algebras for a familial monad on planar dendroidal sets.
\end{ex}

\begin{ex}
Crossed $\C$-groups were originally defined for $\C=\Delta$ \cite{crossedsimplicialgroups}, and any crossed simplicial group provides a monad for adding symmetries to simplicial sets. For instance, the crossed simplicial group sending $[n]$ to $\ints/(n+1)$ has as its total category Connes' cycle category $\Lambda$, so cyclic sets are algebras for a familial monad on simplicial sets.
\end{ex}

\subsection{Cubical $\omega$-Categories}\label{cubical_omega_categories}

The examples above all have representable arity diagrams, but we can also define familial monads on semicubical sets whose algebras have compositional structure similar to $n$-tuple categories.

\begin{defn}[{\cite[Definition 2.1]{cubicalcategories}}]
A \emph{cubical $\omega$-category} is a cubical set equipped with $n$ composition operations for $n$-cubes in the $n$ different directions satisfying unit (with respect to degeneracies), associativity, and interchange equations.
\end{defn}

When restricted to cubes in dimensions up to $n$, cubical $n$-categories resemble $n$-tuple categories \cite[Definition 2.1]{nfold}, but without the $n$ distinct types of arrows and resulting $\binom{n}{m}$ distinct types of $m$-cubes. 

\begin{ex}
For a category $\C$, its cubical nerve is a cubical $\omega$-category with composition of two compatible cubes given by composition in $\C$ of the arrows in the $i$th direction:
$$f \mapsto 
\begin{tikzcd} \cdot \rar[equals] \dar[swap]{f} & \cdot \dar{f} \\ \cdot \rar[equals] & \cdot \ntikz
\cdot \rar{f} \dar[swap]{h} & \cdot \rar{g} \dar[swap]{i} & \cdot \dar{j} \\ \cdot \rar[swap]{k} & \cdot \rar[swap]{\ell} & \cdot \end{tikzcd}
\mapsto
\begin{tikzcd} \cdot \rar{gf} \dar[swap]{h} & \cdot \dar{j} \\ \cdot \rar[swap]{\ell k} & \cdot \ectikz
\end{ex}

\begin{rem}
Cubical $\omega$-categories with connections, symmetries, etc. can be defined similarly using equations such as those for connections in \cite[Equation 2.6]{cubicalcategories}. The example above is in fact a cubical $\omega$-category with both symmetries and connections. \cite{cubicalcategories} shows that the category of cubical $\omega$-categories with connections is equivalent to the category of strict globular $\omega$-categories.
\end{rem}

\begin{defn}
The \emph{Day convolution product} $\otimes \colon \sqh_\d \times \sqh_\d \to \sqh_\d$ on semicubical sets is defined by left Kan extension of the functor 
$$\square_\d \times \square_\d \xrightarrow{\otimes} \square_\d \xrightarrow{y} \sqh_\d$$
along the product of Yoneda embeddings $\square_\d \times \square_\d \to \sqh_\d \times \sqh_\d$ and has the 0-cube $\s^0$ as a unit.
\end{defn}

\begin{ex}
Consider the string $\atol{k}$ of $k$ composable 1-cubes (arrows) in $\sqh_\d$.
The Day product $\atol{k_1} \otimes \cdots \otimes \atol{k_n}$ is the standard $k_1 \times \cdots \times k_n$ grid of $n$-cubes, where each zero among the natural numbers $k_i$ reduces the top dimension of the cubes in the grid by one. The inclusions $s,t$ from $\s^0$ to $\atol{k}$ sending the 0-cube to the source or target of the string of 1-cubes lets us define the source and target maps
$$s_i,t_i \colon \blp \atol{k_1} \otimes \cdots \otimes \hat{\atol{k_i}} \otimes \cdots \otimes \atol{k_n} \brp \to \blp \atol{k_1} \otimes \cdots \otimes \atol{k_n} \brp$$
of an $n$-dimensional grid in each of the $n$ directions.
\end{ex}

\begin{prop}
Cubical $\omega$-categories are algebras for a familial monad on semicubical sets.
\end{prop}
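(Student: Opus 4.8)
The plan is to apply \cref{monad_representation}: I will describe a familial monad representation $(S,E,e,m)$ over $\square_\d$ and check that its algebras are exactly cubical $\omega$-categories. The operations are the cubical pasting diagrams. I set $S\s^n = \nats^n$, regarding a tuple $\ul k = (k_1,\dots,k_n)$ as the shape of a $k_1\times\cdots\times k_n$ grid of $n$-cubes, where a coordinate $k_i=0$ records a degeneracy in direction $i$; the face map $S\d^n_{i,\eps}\colon S\s^{n+1}\to S\s^n$ deletes the $i$-th coordinate, independently of $\eps$, which is functorial since the cubical identities among the $\d$'s correspond precisely to reorderings of coordinate deletions. For the exponent I set $E\ul k = \atol{k_1}\otimes\cdots\otimes\atol{k_n}$, the standard grid formed from the Day convolution product, and send the generating morphism of $\sint S$ lying over $\d^n_{i,0}$ (resp.\ $\d^n_{i,1}$) to the inclusion $s_i$ (resp.\ $t_i$) of the front (resp.\ back) face of the grid in direction $i$. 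Functoriality of $E$ reduces to the fact that face inclusions in distinct directions act in distinct tensor factors and so commute, which is bifunctoriality of $\otimes$. The resulting endofunctor sends a semicubical set $X$ to $TX$ with $TX_{\s^n} = \coprod_{\ul k\in\nats^n}\Hom(\atol{k_1}\otimes\cdots\otimes\atol{k_n},X)$, so the $n$-cubes of $TX$ are grid-shaped pasting diagrams of cubes of $X$.

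Since $\square_\d$ is direct, each $\s^n$ is rigid, so by \cref{rigid} it suffices to give the $S$-components of the unit and multiplication and to check the arity conditions as properties. The unit $e\colon S^0\to S$ sends $*_{\s^n}$ to $(1,\dots,1)$, and indeed $Ee(*_{\s^n}) = \atol{1}\otimes\cdots\otimes\atol{1} = \s^n = y(\s^n)$. For the multiplication the key combinatorial input is that a map $f\colon\atol{k_1}\otimes\cdots\otimes\atol{k_n}\to S$ is the same thing as a family of functions $\ell_j\colon\{1,\dots,k_j\}\to\nats$, one for each direction $j$: restricting $f$ to edges in a fixed direction $j$, any two such edges in adjacent slots of some other direction are opposite faces of a square of the grid whose $f$-value, after deleting all coordinates but $j$, must equal both of their labels, so the label on a $j$-directed edge depends only on which of the $k_j$ slots it occupies; conversely such a family determines $f$ because the face maps of $S$ only delete coordinates. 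Hence $SS\s^n\cong\coprod_{\ul k}\Hom(\{1,\dots,k_1\},\nats)\times\cdots\times\Hom(\{1,\dots,k_n\},\nats)$, and I set $m(\ul k,f)$ to be the total grid, with $j$-th coordinate $\ell_j(1)+\cdots+\ell_j(k_j)$. The arity condition $Em(\ul k,f)\cong\colim_{x\in E\ul k}Ef(x)$ then says that a grid of grids glues along its shared faces to the evident total grid, which follows from $\atol{k_1}\otimes\cdots\otimes\atol{k_n}$ being a colimit of representables together with $\otimes$ preserving colimits in each variable.

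With this product description of $SS$, the remaining monad-representation equations are straightforward: unitality is immediate, and associativity is the statement that summing a list of lists of naturals does not depend on whether one first sums the inner lists or first concatenates. It remains to identify $alg(T)$ with cubical $\omega$-categories. Unwinding a structure map $TA\to A$: the operations of shape $(1,\dots,0,\dots,1)$ (a single $0$) endow $A$ with degeneracy maps $\sigma_i$ satisfying the cubical identities, since those identities are forced by the multiplication law applied to the relevant small grids, so $A$ underlies a cubical set; the operations of shape $(1,\dots,k,\dots,1)$ (a single $k$) give $k$-ary composition of $n$-cubes in direction $i$; and the multiplication law expresses the operation on a general grid as the iterated composite of these basic ones, so that its well-definedness is equivalent exactly to the unit (with respect to degeneracies), associativity, and interchange laws defining a cubical $\omega$-category. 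Maps of $T$-algebras unwind to morphisms of cubical $\omega$-categories, so $alg(T)$ is the category of cubical $\omega$-categories. I expect the main obstacle to be the combinatorial lemma that maps out of a standard grid into $S$ split as a product over the coordinate directions (equivalently, that cubical pasting diagrams are rectangular); once this and its colimit counterpart are in place, the multiplication, its equations, and the identification of algebras are all routine, the one remaining subtlety being to confirm that the degeneracies extracted from a $T$-algebra satisfy all cubical identities and not merely those visibly built into the grid shapes.
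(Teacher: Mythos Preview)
Your proof is correct and follows essentially the same approach as the paper: both define $S\s^n=\nats^n$ with face maps given by coordinate deletion, $E\ul k=\atol{k_1}\otimes\cdots\otimes\atol{k_n}$, identify $SS\s^n$ with $\coprod_{\ul k}\nats^{k_1}\times\cdots\times\nats^{k_n}$ via the observation that maps into $S$ are determined on $1$-cubes since $S\d^n_{i,0}=S\d^n_{i,1}$, take $m$ to sum, and appeal to rigidity and properties of iterated sums for the remaining equations. One minor imprecision: \cref{rigid} requires the arities $E\ul k$ (the grids) to be rigid, not just the representables $\s^n$; this is still true since the face maps in $\square_\d$ distinguish directions, but your sentence as written only asserts the latter.
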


Cubical $\omega$-categories are just as well algebras for a familial monad on cubical sets, but we prefer to use semicubical sets and treat degeneracies as algebraic structure rather than part of the underlying data. We show this in the simplest case of no connections or symmetries, though the same is true in those settings as well by a more complicated construction; a description of the monad for cubical $\omega$-categories with connections can be found in \cite[Section 2.7]{Kachour}. Presenting this monad as familial provides an alternative proof of \cite[Proposition 9]{Kachour}, which shows that it is cartesian, and a familial representation of the free cubical $\omega$-category with connections monad would similarly suffice to prove \cite[Theorem 1]{Kachour}.

\begin{proof}
The higher category schema $(\square_\d,S,E,e,m)$ is defined as follows:
\begin{itemize}
	\item $Sn = \nats^n$ and $S\d^n_{i,\epsilon}$ is the projection map $\nats^n \to \nats^{n-1}$ omitting the $i$th component
	\item $E() = \s^0$, $E(k_1,...,k_n) = \blp \atol{k_1} \otimes \cdots \otimes \atol{k_n} \brp$, and $E$ sends the generating morphisms\\ ${\d^n_{i,0},\d^n_{i,1} \colon (k_1,...,\hat k_i,...,k_n) \to (k_1,...,k_n)}$ to $s_i,t_i$ respectively
	\item The map $e \colon S^0 \to S$ sends $*_n$ to $(1,...,1) \in Sn$, where $\s^n \cong E(1,...,1)$ as $\atol{1}$ is precisely $\s^1$ and $\s^1 \otimes \scdots{n} \otimes \s^1 \cong \s^n$
	\item $SSn \cong \coprodl_{(k_1,...,k_n)} \Hom \blp E(k_1,...,k_n),S \brp$, where as $S\d^n_{i,0} = S\d^n_{i,1}$ a map from $E(k_1,...,k_n)$ to $S$ is determined by its values in $S1 = \nats$ on the 1-cubes $(0,...,0,j \to j+1,0,...,0)$ in the $k_1 \times \cdots \times k_n$ grid. Therefore $SSn \cong \coprodl_{(k_1,...,k_n)} \nats^{k_1} \times \cdots \times \nats^{k_n}$, and we define $m \colon SS \to S$ by 
$$\blp (k_1,...,k_n),(\ell_{1,1},...,\ell_{1,k_1}),...,(\ell_{n,1},...,\ell_{n,k_n}) \brp \mapsto \blp \sum_{i=1}^{k_1} \ell_{1,i},...,\sum_{i=1}^{k_n} \ell_{n,i} \brp.$$
For such $(k,\ell)$, $\colim \blp \sint E(k_1,...,k_n) \to \sint S \xrightarrow{E} \sqh_\d \brp$ is the grid given by plugging an ${\ell_{1,j_1} \times \cdots \times \ell_{n,j_n}}$ grid into the $(j_1,...,j_n)$th cube in the grid $E(k_1,...,k_n)$, which is isomorphic to $E \blp \sum\limits_{i=1}^{k_1} \ell_{1,i},...,\sum\limits_{i=1}^{k_n} \ell_{n,i} \brp$ as desired. Pictured below is the grid for the assignment
		$$\blp (3,2),(2,1,3),(1,2) \brp \mapsto (2+1+3,1+2) = (6,3):$$
\bctikz
\bullet \ar[very thick]{rr} \ar[very thick]{d} & \cdot \dar[color=gray] & \bullet \ar[very thick]{r} \ar[very thick]{d} & \bullet \ar[very thick]{rrr} \ar[very thick]{d} & \cdot \dar[color=gray] & \cdot \dar[color=gray] & \bullet \ar[very thick]{d} \\
\bullet \ar[very thick]{rr} \ar[very thick]{dd} & \cdot \dar[color=gray] & \bullet \ar[very thick]{r} \ar[very thick]{dd} & \bullet \ar[very thick]{rrr} \ar[very thick]{dd} & \cdot \dar[color=gray] & \cdot \dar[color=gray] & \bullet \ar[very thick]{dd} \\
\cdot \rar[color=gray] & \cdot \rar[color=gray] \dar[color=gray] & \cdot \rar[color=gray] & \cdot \rar[color=gray] & \cdot \rar[color=gray] \dar[color=gray] & \cdot \rar[color=gray] \dar[color=gray] & \cdot \\
\bullet \ar[very thick]{rr} & \cdot & \bullet \ar[very thick]{r} & \bullet \ar[very thick]{rrr} & \cdot & \cdot & \bullet
\ectikz
	\item The unitality and associativity equations follow from the analogous properties of multi-valued sums, and rigidity of the arity diagrams
\end{itemize}

An algebra for $T$ is a semicubical set $A$ equipped with
\begin{itemize}
	\item Degeneracy maps as in \cref{cube_degeneracies}, where for each $1 \le i \le n$, $(1,...,1,0,1,...,1) \in Sn$ provides a map $s_i \colon A_{n-1} \to A_n$ satisfying the usual cubical identities
	\item $n$ binary composition operations for $n$-cubes, where for each $1 \le i \le n$, $(1,...,1,2,1,...,1) \in Sn$ provides a map $\mu_i \colon A_n \;{}_{d_{i,1}}\!\!\times_{d_{i,0}} A_n \to A_n$
	\item As for each composable grid of cubes up to dimension $n$, there is a unique element of $Sn$ sent to that grid by $E$, the multiplication law for the monad algebra $A$ ensures that these compositions are unital (with respect to degeneracies), associative in each direction, and satisfy the interchange law between compositions in different directions
\end{itemize}
which makes the structure of $A$ precisely that of a cubical $\omega$-category. 
\end{proof}

Uniqueness of each operation with respect to its arity makes $T$ \emph{shapely} in the sense of \cite{Shapely}.

\begin{rem}
The theory associated to $T$ has finite cubical grids as objects, with morphisms the homomorphisms of the free cubical $\omega$-categories $TE(k_1,...,k_n)$ generated by those grids. Concretely, these are the maps between the vertices of the grids which send rows in each direction of the domain to rows in the codomain, such as the map $TE(2,2) \to TE(3,5)$ depicted below:
\bctikz 
\cdot \rar \dar & \cdot \rar \dar & \cdot \dar \\ \cdot \rar \dar & \cdot \rar \dar & \cdot \dar \\ \cdot \rar & \cdot \rar & \cdot \end{tikzcd}
\qquad \mapsto \qquad  \begin{tikzcd} 
\bullet \rar[very thick] \ar[very thick]{dd} & \bullet \ar[very thick]{rrr} \ar[very thick]{dd} & \cdot \dar[color=gray] & \cdot \dar[color=gray] & \bullet \rar[color=gray] \ar[very thick]{dd} & \cdot \dar[color=gray] \\ 
\cdot \rar[color=gray] & \cdot \rar[color=gray] & \cdot \rar[color=gray] \dar[color=gray] & \cdot \rar[color=gray] \dar[color=gray] & \cdot \rar[color=gray] & \cdot \dar[color=gray] \\ 
\bullet \rar[very thick] \dar[very thick] & \bullet \ar[very thick]{rrr} \dar[very thick] & \cdot \dar[color=gray] & \cdot \dar[color=gray] & \bullet \rar[color=gray] \dar[very thick] & \cdot \dar[color=gray] \\ 
\bullet \rar[very thick] & \bullet \ar[very thick]{rrr} & \cdot & \cdot & \bullet \rar[color=gray] & \cdot 
\ectikz

These grids are the cubical analogue of the pasting diagrams of Joyal's category $\Theta$, the theory of the free strict $\omega$-category monad on globular sets, and we call this category \emph{cubical $\Theta$}, written $\STheta$. Concatenation of the lists $(k_1,...,k_n)$ defines a tensor product $\otimes$ on $\STheta$, and all morphisms in $\STheta$ uniquely decompose under $\otimes$ into morphisms between the 1-dimensional grids $(k_1)$. The full subcategory of 1-dimensional grids is isomorphic to $\Delta$, so in this sense $\STheta$ is the free monoidal category generated by $\Delta$ with identity $(0)$ (in $\STheta$, $(0)$ is isomorphic to $()$).
\end{rem}

\begin{rem}
We can also consider the \emph{$n$-truncated} semicube category, defined as the full subcategory of $\square_\d$ spanned by $\s^0,...,\s^n$, where $\square_\omega$ recovers $\square_\d$. The construction above restricted to dimensions up to $n$ gives a monad on $n$-truncated semicubical sets whose algebras are the analogous notion of cubical $n$-categories, with theory category $\STheta_n$ of grids up to dimension $n$.
\end{rem}

\appendix

\section{Grothendieck Constructions}\label{grothendiecks}

Before discussing polynomials in $\Cat$ built out of Grothendieck fibrations and opfibrations, we review the relevant definitions and provide convenient constructions of pullbacks, composites, and distributivity pullbacks of opfibrations, then establish a basic functoriality result for two sided fibrations.

\subsection{Operations on Opfibrations}

Grothendieck fibrations and opfibrations are usually defined as functors between categories satisfying certain lifting properties allowing them to be equivalently defined in terms of their fibers over each object and morphism in the codomain. For convenience, we use this ``Grothendieck correspondence'' between such functors and their fibers, usually considered a theorem, as our definition of (op)fibrations:

\begin{defn}
A functor $p \colon \A \to \B$ is an \emph{opfibration} if it is (up to isomorphism of the domain) of the form $p_\Phi \colon \soint \Phi \to \B$ for some $\Phi \colon \B \to \Cat$, where $\soint \Phi$ is the following category: 
\begin{itemize}
	\item Objects are pairs $(b,x)$ for $b \in \Ob \B $, $x \in \Ob \Phi b$
	\item Morphisms are pairs $(i_0,i_1) \colon (b,x) \to (b',x')$ for $i_0 \colon b \to b'$ in $\B$, $i_1 \colon \Phi(i_0)(x) \to x'$ in $\Phi b'$
	\item The identity at $(b,x)$ is given by $(\id_b,\id_x)$
	\item Composites are given by $(i'_0,i'_1) \circ (i_0,i_1) = \blp i'_0 \circ i_0, i'_1 \circ \Phi(i'_0)(i_1) \brp$
	\item The functor $p_\Phi \colon \soint \Phi \to \B$ sends $(b,x)$ to $b$ and $(i_0,i_1)$ to $i_0$
\end{itemize}
For an opfibration $p$, written $p \colon \A \fib \B$, we write $\Phi_p$ for the corresponding functor $\B \to \Cat$, where $\Phi_p(b)$ is (up to isomorphism) the fiber of $p$ over the object $b$ in $\B$. 

Dually, \emph{fibrations} $\A \to \B$ are those functors corresponding to an analogous construction for functors $\B^{op} \to \Cat$, and \emph{discrete fibrations} correspond to functors $\B^{op} \to \Set \emb \Cat$, precisely the categories of elements for functors in $\bh$, hence the similar notation $\sint$.
\end{defn}

(Op)fibrations are often taken to be a more general class of functors corresponding instead to pseudofunctors $\B \to \Cat$, with those corresponding to strict functors called ``split opfibrations.'' For our purposes we can restrict to split opfibrations, though it is straightforward to extend the constructions on split opfibrations below to the more general setting. For a more thorough account of fibrations and opfibrations see \cite{grayfibred} or \cite[Appendix]{tholencolimit}.

\begin{rem}
$\soint \Phi$ can be seen as the ``lax colimit'' of $\Phi \colon \B \to \Cat$, in the sense that it is initial among categories with a lax cocone from $\Phi$ (see \cite[Remark 2.13]{tholencolimit}). In this case the lax cocone is given by the functors $J_b \colon \Phi(p) \emb \soint \Phi \colon x \mapsto (b,x)$ for each object $b$ in $\B$ and natural transformations $J_i \colon J_b \Rightarrow J_{b'} \circ \Phi(i)$ for each morphism $i \colon b \to b'$ in $\B$ with $x$ component given by 
$$(i,\id) \colon (b,x) \to \blp b',\Phi(i)(x) \brp.$$
As lax cocones generalize strict cocones, this universal property provides a canonical functor $Q_\Phi \colon \soint \Phi \to \colim(\Phi)$ from the lax colimit to the strict colimit of $\Phi$ in $\Cat$ (see \cite[Example 4.8]{tholencolimit}).
\end{rem}

While the following propositions are usually proven using equivalent definitions of opfibrations in terms of lifting properties (\cite[Proposition 3.1]{grayfibred}), our main result relies on a more explicit description of pullbacks and composites of opfibrations in terms of $\Cat$-valued functors. These constructions are widely understood but do not appear in the literature.

\begin{prop}\label{opfibration_pullback}
Opfibrations are closed under pullback.
\end{prop}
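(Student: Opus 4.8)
The plan is to prove this at the level of classifying functors: if $p \colon \A \fib \B$ is an opfibration and $g \colon \B' \to \B$ is any functor, then a pullback of $p$ along $g$ is given by the opfibration $p_{\Phi_p \circ g} \colon \soint(\Phi_p g) \to \B'$ classified by the composite $\Phi_p g \colon \B' \to \Cat$. Since by definition being an opfibration is a property invariant under isomorphism over the codomain, and since pullbacks are likewise invariant under isomorphism of the cospan, I may replace $\A$ by $\soint \Phi_p$ and $p$ by $p_{\Phi_p}$ from the outset. It then suffices to exhibit $\soint(\Phi_p g)$, together with $p_{\Phi_p g}$ and a suitable functor to $\soint \Phi_p$, as a limit cone for the cospan $\B' \xrightarrow{g} \B \xleftarrow{p_{\Phi_p}} \soint \Phi_p$.

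First I would construct the comparison functor $K \colon \soint(\Phi_p g) \to \soint \Phi_p$. On objects, set $K(b',x) = (g b', x)$; this is well defined because $\Ob (\Phi_p g)(b') = \Ob \Phi_p(g b')$. On morphisms $(i_0,i_1) \colon (b'_1,x_1) \to (b'_2,x_2)$, where $i_0 \colon b'_1 \to b'_2$ in $\B'$ and $i_1 \colon (\Phi_p g)(i_0)(x_1) \to x_2$ in $(\Phi_p g)(b'_2)$, set $K(i_0,i_1) = (g i_0, i_1)$; this is legitimate because $(\Phi_p g)(i_0) = \Phi_p(g i_0)$ and $(\Phi_p g)(b'_2) = \Phi_p(g b'_2)$, so $i_1$ is literally a morphism $\Phi_p(g i_0)(x_1) \to x_2$ in $\Phi_p(g b'_2)$, i.e.\ exactly the second component of a morphism $(g b'_1, x_1) \to (g b'_2, x_2)$ in $\soint \Phi_p$ over $g i_0$. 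Functoriality of $K$ is immediate from functoriality of $g$ and of $\Phi_p$ together with the definition of composition in the Grothendieck construction, and by construction $p_{\Phi_p} \circ K = g \circ p_{\Phi_p g}$, giving a commuting square.

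Finally I would verify the universal property. Given a category $\C$ with functors $F \colon \C \to \B'$ and $G \colon \C \to \soint \Phi_p$ satisfying $p_{\Phi_p} G = g F$, each object $Gc$ has first component forced to equal $g(Fc)$, so $Gc = (g(Fc), x_c)$ with $x_c \in \Ob \Phi_p(g F c) = \Ob (\Phi_p g)(Fc)$; similarly each $G(j)$ for $j \colon c \to c'$ is a pair $(g(Fj), \beta_j)$ with $\beta_j \colon (\Phi_p g)(Fj)(x_c) \to x_{c'}$. Define $\langle F, G\rangle \colon \C \to \soint(\Phi_p g)$ by $c \mapsto (Fc, x_c)$ and $j \mapsto (Fj, \beta_j)$; functoriality follows from that of $G$ by projecting away first components, and one checks directly that $p_{\Phi_p g}\langle F,G\rangle = F$ and $K\langle F,G\rangle = G$ on the nose, while uniqueness is clear since any mediating functor must have precisely these components. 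I do not anticipate a genuine obstacle: the argument is entirely bookkeeping with the explicit construction of $\soint(-)$, the only mild subtlety being the identification $(\Phi_p g)(i_0) = \Phi_p(g i_0)$ which lets the fiber data of the pullback be transported verbatim. It is worth remarking that this computation shows more precisely that $\Phi_{g^\ast p} = \Phi_p \circ g$, i.e.\ pullback of opfibrations corresponds to precomposition of classifying functors, which is the form in which the result will be applied later.
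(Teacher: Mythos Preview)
Your proof is correct and follows essentially the same approach as the paper: both show that the opfibration $p_{\Phi_p \circ g} \colon \soint(\Phi_p g) \to \B'$ together with the functor $\soint(\Phi_p g) \to \soint \Phi_p$ (your $K$, the paper's $\soint u$) satisfies the universal property of the pullback, by explicitly constructing the mediating functor from an arbitrary cone. Your closing remark that $\Phi_{g^\ast p} = \Phi_p \circ g$ is exactly the form in which the paper uses the result downstream.
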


\begin{proof}
Consider the diagram below left in $\Cat$:
\bctikz
& \C \arrow{d}{f} \\
\A \arrow[two heads]{r}{p} & \B
\ntikz
D \rar{\psi} \dar[swap]{\psi} & \C \arrow{d}{u} \\
\A \arrow[two heads]{r}{p} & \B
\ectikz

We show that the opfibration $p_{(\Phi_p \circ u)} \colon \soint (\Phi_p \circ u) \fib \C$ is the pullback of $p$ along $u$. For any pair of functors $\phi,\psi$ commuting as above right, we define $\gamma \colon D \to \soint (\Phi_p \circ u)$ by
$$d \mapsto \blp \psi(d), \phi(d) \brp, \qquad (i \colon d \to d') \mapsto \blp \psi(i), \phi(i) \brp$$
where $\psi(d) \in \C$ and $\phi(d) = (b,x)$ for 
$$b=p\phi(d)=u\psi(d) \in \Ob \B, \qquad x \in \Ob\Phi_p(u\psi(d)).$$ 
$\psi(i) \colon \psi(d) \to \psi(d')$ in $\C$, and $\phi(i) = (i_0,i_1) \colon (b,x) \to (b',x')$ for
$$i_0 = p\phi(i) = u\psi(i) \textrm{ in } \B, \qquad i_1 \colon \Phi_p(i_0)(x) \to x' \textrm{ in } \Phi_p(u\psi(d')).$$ 
Clearly $p_{(\Phi_p \circ u)} \circ \gamma = \psi$, and likewise $\phi$ factors as $\gamma$ followed by the functor 
$$\soint u \colon \soint (\Phi_p \circ u) \to \soint \Phi_p \cong \A, \qquad \blp c \in \C, x \in \Phi_p(u(c)) \brp \mapsto \blp u(c), x \brp, \qquad (j_0,j_1) \mapsto \blp u(j_0),j_1 \brp.$$ 
\end{proof}

Recall the \emph{lax overcategory} $\Cat//\C$ with objects small categories over $\C$ and morphisms diagrams as below in $\Cat$:
\bctikz X \ar{rr}{} \ar{dr}[swap]{} \ar[Rightarrow, shorten=18, shift right=4]{rr}{} & & Y \ar{dl}{} \\ & \C \ectikz
We will also sometimes consider $\Cat//\C$ for $\C$ a large category, the \emph{lax slice} over $\C$ of the inclusion $\Cat \to \CAT$ of small into large categories, though for our purposes it will not make much difference whether $\C$ is large or small.

\begin{rem}\label{int_functoriality}
The assignment $u \mapsto \soint u$ in the proof above extends the assignment $\soint$ into a functor $\Cat/\Cat \to \Cat$. However, given functors 
$$u \colon \C \to \B, \quad \Phi \colon \C \to \Cat, \quad \Phi' \colon \B \to \Cat,$$
a functor $\soint \Phi \to \soint \Phi'$ commuting with $u$ corresponds to functors ${\phi_c \colon \Phi(c) \to \Phi'(u(c))}$ which are \emph{lax natural} in $c$, meaning they are equipped with natural transformations ${\phi_i \colon \Phi'(u(i))\phi_c \Rightarrow \phi_{c'}\Phi(i)}$ for each $i \colon c \to c'$, functorial in $i$. %cite?
Given such data, we can define 
$$\soint(u,\phi) \colon \soint \Phi \to \soint \Phi', \qquad (c,x) \mapsto \blp u(c), \phi_c(x) \brp,$$
sending a map $\blp i_0 \colon c \to c',i_1 \colon \Phi(i_0)(x) \to x' \brp$ to 
$$\blp u(i_0), \Phi'(u(i_0))(\phi_c(x)) \xrightarrow{\phi_{i_0,x}} \phi_{c'}(\Phi(i_0)(x)) \xrightarrow{\phi_{c'}(i_1)} \phi_{c'}(x')\brp,$$
and it is straightforward to check that every such functor arises in this way. When $\phi$ is strictly natural, this shows that $\soint$ extends to a functor $\Cat//\Cat \to \Cat$
\end{rem}

\begin{prop}\label{opfibration_composite}
Opfibrations are closed under composition.
\end{prop}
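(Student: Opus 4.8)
\section*{Proof proposal}

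The plan is to produce a strict $\Cat$-valued classifying functor for $q \circ p$, which by the definition of opfibration is all that is required. Write $p \colon \A \fib \B$ and $q \colon \B \fib \C$ with classifying functors $\Phi_p \colon \B \to \Cat$ and $\Phi_q \colon \C \to \Cat$, so that $\B \cong \soint \Phi_q$ and $\A \cong \soint \Phi_p$. Morally, the fiber of $q \circ p$ over an object $c$ of $\C$ is the total category of $\Phi_p$ restricted along the fiber inclusion $J_c \colon \Phi_q(c) \emb \soint \Phi_q = \B$ (the component $x \mapsto (c,x)$ of the lax cocone on $\Phi_q$), so I set
$$\Psi(c) := \soint\bigl( \Phi_p \circ J_c \bigr).$$

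To make $\Psi$ functorial, note that for $i \colon c \to c'$ in $\C$ the change-of-fiber functor $\Phi_q(i) \colon \Phi_q(c) \to \Phi_q(c')$ comes equipped with a natural transformation $\alpha_i \colon \Phi_p \circ J_c \Rightarrow (\Phi_p \circ J_{c'}) \circ \Phi_q(i)$ whose component at $x$ is $\Phi_p(i, \id_x) \colon \Phi_p(c,x) \to \Phi_p\bigl(c', \Phi_q(i)(x)\bigr)$, the image under $\Phi_p$ of the lax-cocone morphism $(i, \id_x)$ in $\B$. Using the composition law of $\soint \Phi_q$ one checks that $\alpha_i$ is in fact \emph{strictly} natural, that $\alpha_{j \circ i}$ is the composite of $\alpha_i$ with the whiskering of $\alpha_j$ by $\Phi_q(i)$, and that $\alpha_{\id_c}$ is the identity; these hold because $\Phi_q$ is a strict functor. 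Hence $c \mapsto (\Phi_p \circ J_c)$, $i \mapsto (\Phi_q(i), \alpha_i)$ defines a functor from $\C$ into the category of $\Cat$-valued functors and strict natural transformations along base change, and postcomposing with the functor $\soint$ of \cref{int_functoriality} yields $\Psi \colon \C \to \Cat$ with $\Psi(i) = \soint(\Phi_q(i), \alpha_i)$.

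It then remains to identify $\soint \Psi$ with $\A$ over $\C$. On objects both are triples $(c, x, u)$ with $x \in \Ob \Phi_q(c)$ and $u \in \Ob \Phi_p(c,x)$, matched by $(c,(x,u)) \leftrightarrow ((c,x),u)$. On morphisms, unwinding the definition of $\soint \Psi$ and the formula for $\soint(\Phi_q(i_0), \alpha_{i_0})$ from \cref{int_functoriality}, a morphism $(c,x,u) \to (c',x',u')$ in $\soint \Psi$ consists of $i_0 \colon c \to c'$, $j_0 \colon \Phi_q(i_0)(x) \to x'$, and a map $j_1 \colon \Phi_p(\id_{c'}, j_0)\Phi_p(i_0, \id_x)(u) \to u'$; since $\Phi_p(\id_{c'},j_0)\circ\Phi_p(i_0,\id_x) = \Phi_p(i_0,j_0)$ by the composition law of $\B$, this is precisely the datum of a morphism $((c,x),u) \to ((c',x'),u')$ in $\soint \Phi_p = \A$. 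One checks that the resulting bijection preserves identities and composition — the two composition laws both reduce to the same thrice-nested composite — and that $q \circ p$ is carried to the projection $\soint \Psi \to \C$, so $q \circ p$ is an opfibration. The only genuine labor is this compatibility-with-composition check, a mechanical but somewhat lengthy expansion of the Grothendieck composition law applied twice; everything else is bookkeeping, and the strictness of the classifying functors (we work throughout with split opfibrations) rules out any coherence subtleties.
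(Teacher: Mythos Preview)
Your proposal is correct and follows essentially the same approach as the paper: both define $\Psi(c) = \soint(\Phi_p \circ J_c)$, use the transformation $J_i$ (your $\alpha_i$ is exactly $\Phi_p \circ J_i$) to make $\Psi$ functorial via the functoriality of $\soint$ from \cref{int_functoriality}, and then identify $\soint \Psi \cong \A$ via the reindexing $((c,x),y) \leftrightarrow (c,(x,y))$. The only cosmetic difference is that the paper phrases the functoriality step by observing that $i \mapsto (\Phi_q(i), J_i)$ lands in the lax overcategory $\Cat//\Cat$, whereas you unpack the components of $\alpha_i$ and the morphism correspondence more explicitly.
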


\begin{proof}
Consider two opfibrations $p \colon \A \fib \B$, $q \colon \B \fib \C$. We define a functor $\Phi \colon \C \to \Cat$ and show that the corresponding opfibration agrees with $pq$. For an object $c$ of $\C$, let
$$\Phi(c) = \bigint \blp \Phi_q(c) \atol{J_c} \B \atol{\Phi_p} \Cat \brp.$$
For a morphism $i \colon c \to c'$, we have a morphism 
\bctikz \Phi_q(c) \ar{rr}{\Phi_q(i)} \ar{dr}[swap]{J_c} \ar[Rightarrow, shorten=18, shift right=6]{rr}{J_i} & & \Phi_q(c') \ar{dl}{J_{c'}} \\ & \B \dar[swap]{\Phi_p} \\ & \Cat \ectikz
in $\Cat//\Cat$. This assignment evidently respects identities and composites, so as $\soint$ is functorial over $\Cat//\Cat$, $\Phi$ defines a functor $\C \to \Cat$.

It remains to show that $\soint \Phi$ agrees with $pq$. We define an isomorphism $\A \cong \soint \Phi_p \cong \soint \Phi$ over $\C$ by, for $i_0 \colon c \to c'$ in $\C$, $i_1 \colon \Phi_q(i_0)(x) \to x'$ in $\Phi_q(c')$, and $i_2 \colon \Phi_p(i_0,i_1)(y) \to y'$ in $\Phi_p(x')$,
$$\blp (c,x),y \brp \mapsto \blp c,(x,y) \brp, \qquad \blp (i_0,i_1),i_2 \brp \mapsto \blp i_0,(i_1,i_2) \brp.$$
\end{proof}

Recall that for a fixed morphism $p \colon A \to B$ in a category $\A$ with pullbacks, there are functors $\sum_p \colon \A/A \adj \A/B \colon \delt_p$, where $\delt_p$ is defined by (choices of) pullback along $p$ and its left adjoint $\sum_p$ by postcomposition with $p$. $p \colon A \to B$ in $\A$ is \emph{exponentiable} if $\delt_p$ also has a right adjoint $\prod_p \colon \A/A \to \A/B$.

Weber shows (\cite[Section 2.2]{WeberPoly}) that $p$ is exponentiable if for all maps $u \colon X \to A$, there exists a terminal pullback square among those of the form:
\bctikz
Z \arrow{r} \arrow{d}[swap]{} & Y \arrow{dd}{\prod_p u} \\
X \arrow{d}[swap]{u} \\
A \arrow{r}{p} & B
\ectikz 
This square is called a \emph{distributivity pullback}, and given a choice of distributivity pullbacks $\prod_p u$ is defined as the map $Y \to B$ in $\A/B$.

\begin{prop}\label{opfibration_exponentiable}
Opfibrations are exponentiable in $\Cat$.
\end{prop}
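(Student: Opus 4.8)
The plan is to construct the dependent product $\prod_p$ along an opfibration $p$ explicitly in terms of $\Phi_p$, continuing in the style of \cref{opfibration_pullback,opfibration_composite}. Write $p = p_\Phi$ with $\Phi := \Phi_p \colon \B \to \Cat$, so that $\A \cong \soint\Phi$, and fix a functor $u \colon X \to \A$ together with, for each object $b$ of $\B$, the inclusion $J_b \colon \Phi(b) \emb \soint\Phi \cong \A$ of the fiber over $b$ and the pullback $u_b \colon X_b := X \times_\A \Phi(b) \to \Phi(b)$ of $u$ along $J_b$ (which exists since $\Cat$ has pullbacks).

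First I would define the category $Y$ and the functor $q := \prod_p u \colon Y \to \B$. An object of $Y$ is a pair $(b,s)$ with $b \in \Ob\B$ and $s$ a section of $u_b$, and $q(b,s) := b$. A morphism $(b,s) \to (b',s')$ is a pair $(i,\alpha)$ with $i \colon b \to b'$ in $\B$ and $\alpha \colon s \Rightarrow s'\circ\Phi(i)$ a natural transformation of functors $\Phi(b) \to X$, each of whose components $\alpha_x \colon s(x) \to s'(\Phi(i)(x))$ lies over the opcartesian morphism $J_i(x) \colon J_b(x) \to J_{b'}(\Phi(i)(x))$; we set $q(i,\alpha) := i$. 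Composition is $(i',\alpha')\circ(i,\alpha) := \bigl(i'i,\ (\alpha'\ast\Phi(i))\circ\alpha\bigr)$, which lands where it should by the lax-cocone identity $J_{i'i} = (J_{i'}\ast\Phi(i))\circ J_i$, and identities are $(\id_b,\id_s)$. Equivalently — and this is the clean way to see $q$ is the right functor — a morphism of $Y$ over $i$ is a functor over $\soint\Phi$ out of the Grothendieck construction of $\Phi(i)$ (the category with object set $\Ob\Phi(b)\sqcup\Ob\Phi(b')$ and a new morphism $x \to y$ for each $\Phi(b')$-morphism $\Phi(i)(x) \to y$) that restricts to $s$ on $\Phi(b)$ and to $s'$ on $\Phi(b')$.

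Next I would produce the distributivity pullback. Pulling $q$ back along $p$ gives $Z := Y \times_\B \A$, whose objects are triples $(b,s,x)$ with $(b,s)\in\Ob Y$ and $x\in\Ob\Phi(b)$; I define the evaluation functor $g \colon Z \to X$ by $(b,s,x)\mapsto s(x)$, acting on morphisms through the transformations $\alpha$. Since each $s$ is a section of $u_b$, $g$ lies over $\A$, so together with the projection $Z \to Y$ we obtain a square of the shape appearing in the statement. To check terminality, take any $w' \colon Y' \to \B$, the pullback $Z' := Y' \times_\B \A$, and a functor $g' \colon Z' \to X$ over $\A$; for each object $y'$ of $Y'$ over $b$ the fiber of $Z' \to Y'$ over $y'$ is canonically identified with $\Phi(b)$, and the restriction of $g'$ to it is a section of $u_b$, so $y' \mapsto (b, g'_{y'})$ extends to a functor $Y' \to Y$ over $\B$; one then checks this is the unique such functor whose induced comparison $Z' \to Z$ satisfies $g\circ(Z'\to Z) = g'$.

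The step I expect to be the main obstacle is precisely this last verification, and more generally the treatment of morphisms throughout: unlike in \cref{opfibration_pullback,opfibration_composite}, the functor $q \colon Y \to \B$ is \emph{not} an opfibration, so $Y$ is not presented by a $\Cat$-valued functor, and its morphisms — natural transformations lying over opcartesian lifts — together with the comparison functor $Y' \to Y$ and its compatibility with the evaluation maps must be handled by hand. Everything else is formal. (As a consistency check, the construction recovers the classical fact that opfibrations are Conduché, hence exponentiable, in $\Cat$; the value added here is the explicit description of $\prod_p$ in terms of $\Phi_p$, which is what the later sections require.)
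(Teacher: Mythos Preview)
Your proposal is correct and is essentially the same construction as the paper's: objects of $Y$ are pairs $(b,f)$ with $f\colon\Phi_p(b)\to X$ satisfying $uf=J_b$ (equivalently, sections of your $u_b$), morphisms are pairs $(i,\sigma)$ with $\sigma\colon f\Rightarrow f'\Phi(i)$ lying over $J_i$, $Z$ is the pullback of $p$ along $Y\to\B$, and the evaluation map $Z\to X$ is $(b,f,x)\mapsto f(x)$. The paper is equally terse on the morphism-level terminality check you flag as the main obstacle (``defined similarly on morphisms\ldots unique with respect to these properties''), so your identification of where the actual work lies is accurate.
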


This was proven in \cite[Corollary 6.2]{JohnstoneFibrations} using lifting properties, but as above we construct $\prod_p$ explicitly in terms of the functor $\Phi_p$ associated to an opfibration $p$.

\begin{proof}
Given $u \colon \X \to \A$, $p \colon \A \fib \B$ in $\Cat$, we construct the following distributivity square:
\bctikz
Z \arrow{d}[swap]{w} \arrow[two heads]{r}{q} & Y \arrow{dd}{v} \\
X \arrow{d}[swap]{u} \\
A \arrow[two heads]{r}{p} & B
\ectikz

An object in $Y$ is a pair $\blp b \in \Ob \B, f \colon \Phi_p(b) \to X \brp$ with $uf = J_b$, and a morphism $(b,f) \to (b',f')$ is a pair $\blp i \colon b \to b', \sigma \colon f \Rightarrow f'\Phi(i) \brp$ with $u\sigma = J_i$. The functor $v$ sends $(b,f)$ to $b$ and $(i,\sigma)$ to $i$. 

$q$ is the pullback of $p$ along $v$ given by \cref{opfibration_pullback}. Objects of $Z$ are then triples ${\blp b,f,x \in \Ob \Phi_p(b) \brp}$, and morphisms of $Z$ amount to triples $(i,\sigma,j)$ where $j \colon \Phi_p(i)(x) \to x'$ in $\Phi_p(b')$. The functor $w \colon Z \to X$ sends $(b,f,x)$ to $f(x)$ and $(i,\sigma,j) \colon (b,f,x) \to (b',f',x')$ to $f'(j) \circ \sigma_x$. $uw$ agrees with the projection map from \cref{opfibration_pullback} as each $f$ is a partial section of $u$.

To see that this square is terminal, consider a pullback $q' \colon Z' \fib Y'$ as below:
\bctikz
Z' \arrow{d}[swap]{w'} \arrow[two heads]{r}{q'} & Y' \arrow{dd}{v'} \\
X \arrow{d}[swap]{u} \\
\A \arrow[two heads]{r}{p} & \B
\ectikz
There is a functor $k \colon Y' \to Y$ over $\B$ sending $c$ in $Y$ to 
$$\Phi_p(v'(c)) \cong \Phi_q(c) \atol{J_c} Z' \atol{w'} X,$$
which is a partial section of $u$ as $uw'$ agrees with the projection functor from \cref{opfibration_pullback}, and defined similarly on morphisms.  There is also a functor $\ell \colon Z' \to Z$ over $X$ sending ${\blp c,x \in \Ob \Phi_p(v'(c)) \brp}$ to $\blp v'(c),k(c),x \brp$, so that $q\ell = kq'$, and $(\ell,k)$ are unique with respect to these properties. %be more precise?
\end{proof}

\begin{rem}\label{preserves_fibrations}
If $u$ is a fibration, then for $i \colon b \to b'$ there are functors 
$$Fun_{/\A} \blp \Phi_p(b'),X \brp \to Fun_{/\A} \blp \Phi_p(b),X \brp$$ 
exhibiting $\prod_p u$ as the Grothendieck construction for fibrations of the functor 
$$\B^{op} \to \Cat \colon b \to Fun_{/\A} \blp \Phi_p(b),X \brp,$$ 
and if $u$ is a discrete fibration then each category $Fun_{/\A} \blp \Phi_p(b),X \brp$ is discrete, so $\prod_p$ preserves (discrete) fibrations. $\delt_p$ also preserves (discrete) fibrations, as does $\sum_p$ when $p$ is a (discrete) fibration. 
\end{rem}

\subsection{Two Sided Fibrations}

In the polynomials of the following sections, we consider opfibrations $\A \fib \B$ whose domain is equipped with a functor to another category $\C$, which is made up of compatible fibrations from each fiber. For $\Phi \colon \B \to \Cat$, the data of a functor $\soint \Phi \to \C$ is precisely that of an extension of $\Phi$ to $\Cat//\C$. We will be interested in the case when this $\Phi \colon \B \to \Cat//\C$ factors through the category $Fib(\C)$ (or $DFib(\C)$) of categories with a (discrete) fibration to $\C$ and functors which correspond to strict natural transformations in $Fun(\C^{op},\Cat)$. In $DFib(\C)$ these are all functors which commute strictly over $\C$, and in $Fib(\C)$ these are the functors over $\C$ which preserve the cartesian morphisms.

\begin{defn}
A \emph{(discrete) two sided fibration} from $\C$ to $\B$ is a diagram of the form
$$\C \xleftarrow{p_1} \A \atol{p_2} \B,$$
where $p_2$ is an opfibration such that $\Phi_p \colon \B \to \Cat$ is equipped with a lift along the forgetful functor $Fib(\C) \to Cat$ (resp. $DFib(\C) \to \Cat$). Abusing notation slightly, we also write $\Phi_p \colon \B \to Fib(\C)$ for this lift. %A morphism of two-sided fibrations is a map $\A \to \A'$ commuting over both $\B$ and $\C$, and the category of such two sided fibrations is denoted $2SFib(\C,\B)$. 
\end{defn}

\begin{rem}
This definition agrees with the existing notion of (split) two-sided fibration (see \cite[Definition 2.3.4]{notionsfibration}), as $Fib(\C) \simeq Fun(\C^{op},\Cat)$ and therefore so $Fun(\B,Fib(\C)) \simeq Fun(\C^{op} \times \B, \Cat)$ as in the standard definition. This shows that two-sided fibrations could be defined dually to the above as a fibration over $\C$ with fibers opfibered over $\B$, and in particular that $p_1$ in the above definition is a fibration.
\end{rem}

The following will be useful for composing polynomials built from two-sided fibrations.

\begin{prop}\label{fibration_of_fibration}
$\soint$ extends to a functor $\Cat//Fib(\C) \to Fib(\C)$.
\end{prop}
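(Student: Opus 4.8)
The plan is to lift the functor $\soint \colon \Cat//\Cat \to \Cat$ of \cref{int_functoriality} along the forgetful functor $Fib(\C) \to \Cat$ and the induced functor $\Cat//Fib(\C) \to \Cat//\Cat$. Given an object of $\Cat//Fib(\C)$, namely a functor $\Phi \colon \B \to Fib(\C)$, write $\overline{\Phi} \colon \B \to \Cat$ for its composite with the forgetful functor and $p_b \colon \Phi(b) \to \C$ for the fibration associated to each object $b$ of $\B$. Each $\Phi(i) \colon \Phi(b) \to \Phi(b')$, for $i \colon b \to b'$ in $\B$, is a morphism of $Fib(\C)$ and so commutes strictly over $\C$, meaning $p_{b'} \circ \Phi(i) = p_b$; hence the $p_b$ assemble into a functor $P \colon \soint \overline{\Phi} \to \C$ sending $(b,x)$ to $p_b(x)$ and $(i_0, i_1) \colon (b,x) \to (b',x')$ to $p_{b'}(i_1) \colon p_b(x) \to p_{b'}(x')$, which is well defined since $p_{b'}(\Phi(i_0)(x)) = p_b(x)$. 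The extended $\soint$ will send $\Phi$ to $\soint \overline{\Phi}$ equipped with $P$.

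The one step requiring real argument is that $P$ is a fibration, and I expect this to be the only genuine obstacle. Given $g \colon c \to c'$ in $\C$ and an object $(b', x')$ of $\soint \overline{\Phi}$ with $p_{b'}(x') = c'$, choose a $p_{b'}$-cartesian lift $\tilde g \colon x'' \to x'$ of $g$ in $\Phi(b')$; I claim $(\id_{b'}, \tilde g) \colon (b', x'') \to (b', x')$ is $P$-cartesian. Indeed, for $(j_0, j_1) \colon (b,y) \to (b', x')$ and $t \colon P(b,y) \to c$ with $g \circ t = P(j_0, j_1) = p_{b'}(j_1)$, a factorization $(\id_{b'}, \tilde g) \circ (s_0, s_1) = (j_0, j_1)$ forces $s_0 = j_0$ (using $\Phi(\id_{b'}) = \id$) and $\tilde g \circ s_1 = j_1$, so the cartesian lifting property of $\tilde g$ supplies a unique $s_1 \colon \Phi(j_0)(y) \to x''$ with $\tilde g \circ s_1 = j_1$ and $p_{b'}(s_1) = t$. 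The point is simply that one can always lift in $\soint \overline{\Phi}$ with the identity in the $\B$-direction; this argument also exhibits a cleavage of $P$ given by the morphisms $(\id_{b'}, \tilde g)$.

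For a morphism $(u, \phi) \colon (\B, \Phi) \to (\B', \Phi')$ of $\Cat//Fib(\C)$, the functor $\soint(u, \phi) \colon \soint \overline{\Phi} \to \soint \overline{\Phi'}$, $(b,x) \mapsto (u(b), \phi_b(x))$, is already defined by \cref{int_functoriality} (applied to the underlying morphism of $\Cat//\Cat$), and I would check it is a morphism of $Fib(\C)$: it commutes over $\C$ because each $\phi_b \colon \Phi(b) \to \Phi'(u(b))$ does, and a functor over the base between fibrations preserves all cartesian morphisms as soon as it does so on a cleavage, so it is enough that $\soint(u,\phi)(\id_{b'}, \tilde g) = (\id_{u(b')}, \phi_{b'}(\tilde g))$ is cartesian for the fibration $\soint \overline{\Phi'} \to \C$; and it is, since $\phi_{b'}$ is a morphism of $Fib(\C)$ and hence sends the cartesian $\tilde g$ to a cartesian morphism of $\Phi'(u(b'))$, so $(\id_{u(b')}, \phi_{b'}(\tilde g))$ lies in the cleavage of that fibration described above. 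Finally, preservation of identities and composites is inherited from the functoriality of $\soint \colon \Cat//\Cat \to \Cat$ because the forgetful functor $Fib(\C) \to \Cat$ is faithful; hence $\soint$ lifts to the desired functor $\Cat//Fib(\C) \to Fib(\C)$.
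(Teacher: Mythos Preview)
Your proof is correct and follows essentially the same approach as the paper. The paper's version is briefer because it invokes the remark preceding the proposition that $p_1$ is already known to be a fibration (from the equivalence $Fun(\B,Fib(\C)) \simeq Fun(\C^{op}\times\B,\Cat)$), so it only needs to check that $\soint(u,\phi)$ commutes over $\C$ and preserves cartesian morphisms; you instead spell out the cleavage $(\id_{b'},\tilde g)$ explicitly and verify the cartesian lifting property by hand, which is exactly the content underlying that remark. The argument for preservation of cartesian morphisms is the same in both: cartesian lifts can be chosen to live in a single fiber $\Phi(b)$, and each $\phi_b$ preserves cartesian morphisms because it is a morphism in $Fib(\C)$.
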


\begin{proof}
As $p_1$ above is a fibration, it remains only to show that given 
\bctikz \B \ar{rr}{u} \ar{dr}[swap]{\Phi} \ar[Rightarrow, shorten=26, shift right=6]{rr}{\phi} & & \B' \ar{dl}{\Phi'} \\ & Fib(\C) \ectikz
the map $\soint(u,\phi) \colon \soint \Phi \to \soint \Phi'$ of \cref{int_functoriality} commutes over $\C$ and preserves cartesian morphisms. This follows from the same property of the functors $\phi_b \colon \Phi(b) \to \Phi'(u(b))$, as the cocartesian maps on both sides are sent to identities in $\C$ and all of the cartesian morphisms in $\soint \Phi$ are contained in some $\Phi(b)$ (by the same property of its objects).
\end{proof}

We will primarily be interested in obtaining discrete two sided fibrations, which in addition to naturally arising functors $\B \to DFib(\C) \simeq \ch$ can also be obtained from general two sided fibrations:

\begin{ex}\label{free_discrete}
Recall the functor $|-|_\C \colon Fib(\C) \to DFib(\C)$, left adjoint to the inclusion $DFib(\C) \to Fib(\C)$ and sending a fibration $u$ over $\C$ to the discrete fibration whose fibers are the sets of connected components of the fibers of $u$. $|-|$ is equivalently given by $(\pi_0)_\ast \colon Fun(\C^{op},\Cat) \to Fun(\C^{op},\Set)$, left adjoint to postcomposition with the inclusion $\Set \to \Cat$.

Given $\Phi \colon \B \to Fib(\C)$ corresponding to the two sided fibration $p$, we obtain $|\Phi| \colon \B \to Fib(\C) \to DFib(\C)$, and denote the corresponding discrete two sided fibration by $|p|$. The natural unit map $u \to |u|$ in $Fib(\C)$ induces a map $\Phi \to |\Phi|$ and accordingly $p \to |p|$, where in the latter the map $\pi \colon \A \to |\A|$ sends each element of the (intersection) fiber over $(c,b)$ to its connected component.
\end{ex}

\section{Very Fibrous Polynomials in $\Cat$}\label{polynomials}

We now describe a bicategory of ``very fibrous polynomials'' in $\Cat$, along with other convenient properties of polynomials which help facilitate the comparison of familial representations and familial functors.

\subsection{Special Classes of Polynomials}

Recall from \cite{WeberPoly} that a polynomial $p$ in $\Cat$ from $\C'$ to $\C$ is a diagram as below such that $p_2$ is exponentiable:
\bctikz
& \A \arrow{dl}[swap]{p_1} \arrow{r}{p_2} & \B \arrow{dr}{p_3} \\
\C' & & & \C
\ectikz

\begin{defn}
A polynomial $p$ in $\Cat$ is:
\begin{itemize}
	\item \emph{fibrous} if $p_2$ is an opfibration
	\item \emph{very fibrous} if $p_2$ is an opfibration and $p_3$ is a discrete fibration
	\item \emph{quasi-familial} if $(p_1,p_2)$ is a two sided fibration and $p_3$ is a discrete fibration
	\item \emph{familial} if $(p_1,p_2)$ is a discrete two sided fibration and $p_3$ is a discrete fibration
\end{itemize}

These properties form a hierarchy: familial $\implies$ quasi-familial $\implies$ very fibrous $\implies$ fibrous. The opfibration $p_2$ corresponds to a functor $\Phi_p \colon \B \to \Cat$, which we also use to denote the functor:
\begin{itemize}
	\item $\Phi_p \colon \B \to \Cat//\C'$ if $p$ is (very) fibrous%add notation for extra structure?
	\item $\Phi_p \colon \B \to Fib(\C')$ is $p$ is quasi-familial
	\item $\Phi_p \colon \B \to DFib(\C') \simeq \chp$ if $p$ is familial
\end{itemize}
\end{defn}

Recall (\cite[Section 3.2]{WeberPoly}) that given any polynomial $p$ in $\Cat$, its associated polynomial functor $P(p)$ is the composite 
$$\Cat/\C' \atol{\delt_{p_1}} \Cat/\A \atol{\prod_{p_2}} \Cat/\B \atol{\sum_{p_3}} \Cat/\C.$$
If $p_2$ is an opfibration then by \cref{opfibration_exponentiable} we have, for $X$ a category over $\C'$, $\prod_{p_2}\delt_{p_1}X$ is the category with:
\begin{itemize}
	\item objects pairs $\blp b \in \Ob \B, f \colon \Phi_p(b) \to X \brp$ with $f$ commuting over $\C$
	\item morphisms pairs $\blp i \colon b \to b',\sigma \colon f \Rightarrow f'\Phi_p(i) \brp \colon (b,f) \to (b,f')$, with $\sigma$ lying over the canonical transformation from $\Phi_p(b)$ to $\Phi_p(b')$ in $\C'$ 
\end{itemize}

When $p$ is very fibrous, each of these components of $P(p)$ preserves discrete fibrations: discrete fibrations are closed under pullback, exponentiation along an opfibration by \cref{preserves_fibrations}, and composition with a discrete fibration. As discrete fibrations are equivalent to presheaves, such a $P(p)$ therefore restricts to a functor between presheaf categories:

\begin{defn}\label{polynomial_functors}
For $p$ a fibrous polynomial as above, we associate to it the functor $P_d(p)$, defined as the composite
$$\chp \atol{\delt_{p_1}} \ah \atol{\prod_{p_2}} \bh \atol{\sum_{p_3}} \ch.$$
\end{defn}

Consider a diagram $X$ in $\chp$, represented below as a discrete fibration over $\C'$: 
\bctikz
& \cdot \arrow{d} \arrow[two heads]{r} & \prod_{p_2}\delt_{p_1}X \arrow{dd} \arrow[equals]{dr} \\
& \delt_{p_1}X \arrow{dl} \arrow{d} & & \sum_{p_3}\prod_{p_2}\delt_{p_1}X \arrow{dd} \\
X \arrow{d} & \A \arrow{dl}[swap]{p_1} \arrow[two heads]{r}{p_2} & \B \arrow{dr}{p_3} \\
\C' & & & \C
\ectikz
Unwinding the definitions, $P_d(p)(X) = \sum_{p_3}\prod_{p_2}\delt_{p_1}X$ has $c$-cells 
$$\coprod_{b \in \B_c} Fun_{/\C'} \blp \Phi_p(b),X \brp.$$
If $p$ is familial, $\Phi_p$ lands in $DFib(\C') \simeq \chp$ and we have
$$P_d(p)(X)_c \cong \coprod_{b \in \B_c} \Hom_\chp \blp \Phi_p(b),X \brp,$$
hence $P_d(p)$ is a familial functor.

\begin{prop}\label{all_familial}
For any very fibrous polynomial $p$, $P_d(p)$ is a familial functor.
\end{prop}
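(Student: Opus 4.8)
The plan is to reduce the statement to the case of familial polynomials, which was settled in the discussion just before the statement. Let $p = (\C' \xleftarrow{p_1} \A \xrightarrow{p_2} \B \xrightarrow{p_3} \C)$ be very fibrous. Since $p_3$ is a discrete fibration, $\B \cong \sint S$ for the presheaf $S$ over $\C$ with $Sc = \B_c$, and by the computation preceding the statement, for a presheaf $X$ over $\C'$ regarded as a discrete fibration,
$$P_d(p)(X)_c \;\cong\; \coprod_{b \in \B_c} Fun_{/\C'}\big(\Phi_p(b),\, X\big).$$
So it is enough to produce a functor $E \colon \sint S \to \chp$ together with isomorphisms $Fun_{/\C'}(\Phi_p(b), X) \cong \Hom_\chp(Eb, X)$ natural in $X$ and compatible with restriction along morphisms of $\C$: then $(S,E)$ is a familial representation exhibiting $P_d(p) \cong \Hse$, and we conclude by \cref{rep_local_equivalence}.

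The first ingredient is that the full subcategory inclusion $DFib(\C') \simeq \chp \hookrightarrow \Cat/\C'$ has a left adjoint $D$; for instance it factors as $DFib(\C') \hookrightarrow Fib(\C') \hookrightarrow \Cat/\C'$, whose left adjoints are the functor $|-|_{\C'}$ of \cref{free_discrete} and the fibrant-replacement functor. For a discrete fibration $X$ this at once gives $Fun_{/\C'}(Y,X) = \Hom_{\Cat/\C'}(Y,X) \cong \Hom_\chp(DY, X)$, naturally in the category $Y$ over $\C'$ and in $X$, so the only remaining task is to promote $b \mapsto D\Phi_p(b)$ to a functor $\B \cong \sint S \to \chp$ (and to check that the above isomorphism is natural in $b$ as well), after which we set $E := D \circ \Phi_p$.

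Here lies the main obstacle: for a merely very fibrous polynomial, $\Phi_p$ lands in $\Cat//\C'$ rather than $\Cat/\C'$, so its transition maps are \emph{lax} triangles over $\C'$ and cannot be fed to $D$ directly. I would resolve this with a rectification lemma: if $Z$ is a discrete fibration over $\C'$, then any lax triangle $(G,\delta)\colon (Y,q_Y) \to Z$ over $\C'$, with $\delta \colon q_Y \Rightarrow q_Z G$, has a canonical strictification $\bar G$, where $\bar G(y)$ is the source of the unique lift of $\delta_y$ against $Z \to \C'$ with target $G(y)$; uniqueness of lifts makes $\bar G$ a functor strictly over $\C'$, makes the assignment natural in $Y$ and $Z$, and shows it is inverse to the inclusion of strict triangles into lax ones, so that $\Hom_{\Cat//\C'}\big((Y,q_Y),Z\big) \cong \Hom_{\Cat/\C'}\big((Y,q_Y),Z\big)$. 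Applying this to the composites $\Phi_p(b) \xrightarrow{\Phi_p(i)} \Phi_p(b') \to D\Phi_p(b')$ (the second map being a unit of $D$) and then invoking the universal property of the unit $\Phi_p(b) \to D\Phi_p(b)$ produces the required maps $D\Phi_p(b) \to D\Phi_p(b')$, with functoriality following from the usual uniqueness arguments; the same rectification identifies the restriction maps on the two sides of the displayed isomorphism. Thus the very fibrous $p$ becomes comparable to the \emph{familial} polynomial $p^\flat$ with the same right leg $p_3$ and with $\Phi_{p^\flat} = D \circ \Phi_p \colon \B \to DFib(\C')$, via a natural isomorphism $P_d(p) \cong P_d(p^\flat)$; since $P_d(p^\flat)$ is familial by the discussion before the statement, so is $P_d(p)$.
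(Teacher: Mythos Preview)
Your argument is correct and follows essentially the same strategy as the paper: replace the fibers $\Phi_p(b)$ by their reflections into $DFib(\C')$ and use the resulting adjunction to identify $Fun_{/\C'}(\Phi_p(b),X)$ with $\Hom_{\chp}(Eb,X)$. The paper carries this out in detail only for the quasi-familial case, where $\Phi_p$ already lands in $Fib(\C')\subset \Cat/\C'$ and the reflector $|{-}|_{\C'}$ of \cref{free_discrete} applies directly; for the general very fibrous case it merely remarks that the reflector $\Cat/\C'\to Fib(\C')$ extends to $\Cat//\C'$ and does not pursue it. Your rectification lemma---that a lax triangle into a discrete fibration strictifies uniquely via cartesian lifts---is precisely the content of that extension, so you have supplied the argument the paper elides. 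One small point: in your phrasing, the composite $\Phi_p(b)\to\Phi_p(b')\to D\Phi_p(b')$ is a morphism in $\Cat//\C'$, not $\Cat/\C'$, so the universal property you invoke is really that of the unit of the extended adjunction $\Cat//\C'\rightleftarrows DFib(\C')$; but this is exactly what your rectification lemma provides.
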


\begin{proof}
Let $p$ be quasi-familial; we can form the familial polynomial $|p|$, called its \emph{familial replacement}, by replacing $\Phi_p \colon \B \to Fib(\C')$ with $|\Phi_p| \colon \B \to DFib/\C'$. 
By the adjunction discussed in \cref{free_discrete}, we have for $X$ discretely fibered over $\C'$ 
$$Fun_{/\C'} \blp \Phi_p(b),X \brp \cong \Hom_{Fib(\C')} \blp \Phi_p(b),X \brp \cong \Hom_\chp \blp |\Phi_p|(b),X \brp,$$
natural in $b$, which establishes an isomorphism 
$$\prod\nolimits_{p_2}\delt\nolimits_{p_1} \cong \prod\nolimits_{|p|_2} \delt\nolimits_{|p|_1}.$$
As $|p|_3 = p_3$, this shows that $P_d(p) \cong P_d(|p|)$, so $P_d(p)$ is a familial functor.

The general proof for a very fibrous polynomial proceeds similarly by noting that the left adjoint $\Cat/\C' \to Fib(\C')$ to the inclusion functor extends to $\Cat//\C'$, but the only example we will need at this level of generality is the identity polynomial discussed below, so we do not discuss this further. %is this true with restricted definition of Fib? probably but may require discreteness of X
\end{proof}

$P_d$ is full in the sense that it can recover any familial functor by a familial polynomial.

\begin{defn}\label{familial_polynomial} 
For any familial functor $\chp \to \ch$ with representation $(S,E)$, we can form the familial polynomial $\g(S,E)$ given by
\bctikz
& \soint E \arrow{dl}[swap]{p_1} \arrow[two heads]{r}{p_2} & \sint S \arrow{dr}{p_3} \\
\C' & & & \C
\ectikz
where $\soint E$ is the Grothendieck construction of the functor $\sint S \atol{E} \chp \cong DFib/\C'$. $P_d(\g(S,E))$ then agrees with the familial functor associated to $(S,E)$ by \cref{polynomial_functors}. 
\end{defn}

\begin{ex}\label{familial_identity_polynomial}
The representation of the identity functor on $\ch$ is given by $(S^0,E^0)$, where $S^0$ is the terminal functor $\C^{op} \to \Set$ and $E^0 \colon \sint S^0 \cong \C \to \ch$ is the Yoneda embedding (\cref{identity_familial}).  The Grothendieck construction of $E^0$ is then the discrete two sided fibration with $\Hom_\C(c',c)$ as the (intersection) fiber over $(c',c)$. Morphisms in the fiber over $c'$ are given by commuting triangles under $c'$, and morphisms from in the fiber over $c$ are given by commuting triangles over $c$, with general morphisms given by composites of these which form commutative squares (below left). $\g(S^0,E^0)$ is then isomorphic to the polynomial below right:
\bctikz
c' \dar \rar \ar{dr} & \cdot \dar \\ 
\cdot \rar & c
\ntikz
& \C^\to \arrow{dl}[swap]{dom} \arrow{r}{cod} & \C \arrow[equals]{dr}{} \\ 
\C & & & \C
\ectikz
\end{ex}

\subsection{Cartesian Morphisms}\label{polynomials_cartesian}

To extend $\g$ to a functor from familial representations to polynomials, we recall the definition of morphisms between polynomials. 

In \cite{WeberPoly}, morphisms $f$ between polynomials $p$ and $q$ are commuting diagrams of the following form:
\bctikz
& \A \arrow{dl}[swap]{p_1} \arrow{r}{p_2} \arrow{dd}[swap]{u_0} \arrow[phantom, shift left=1]{ddr}[very near start]{\lrcorner} & \B \arrow{dr}{p_3} \arrow{dd}{u_1} \\
\C' & & & \C \\
& \A' \arrow{ul}{q_1} \arrow{r}[swap]{q_2} & \B' \arrow{ur}[swap]{q_3}
\ectikz
which for fibrous polynomials amounts to a (pseudo) natural isomorphism $\Phi_p \cong \Phi_q \circ u_1$ in $\Cat//\C$ with components in the subcategory $\Cat/\C$.%check this

These \emph{cartesian} morphisms between polynomials induce natural transformations between polynomial functors as follows:
$$\sum\nolimits_{p_3}\prod\nolimits_{p_2}\delt\nolimits_{p_1} \cong \sum\nolimits_{q_3}\sum\nolimits_{u_1}\prod\nolimits_{p_2}\delt\nolimits_{u_0}\delt\nolimits_{q_1} \cong \sum\nolimits_{q_3}\sum\nolimits_{u_1}\delt\nolimits_{u_1}\prod\nolimits_{q_2}\delt\nolimits_{q_1} \atol{\epsilon} \sum\nolimits_{q_3}\prod\nolimits_{q_2}\delt\nolimits_{q_1}$$
The first isomorphism comes from pseudofunctoriality of $\sum,\delt$, the second is the Beck-Chevalley isomorphism for the pullback square, and the final map is the counit of the adjunction $\sum_u \dashv \delt_u$. $\epsilon$ is cartesian, hence so is the induced natural transformation (\cite[2.1]{GambinoKock}).  %this and following ref are Definitions, but not labeled as such

\begin{lem}\label{cartesian_ff}
$P \colon \Poly(\C',\C)_\times \to Poly(\Cat/\C',\Cat/\C)_\times$, where the codomain is the category of polynomial functors and cartesian transformations, is fully faithful.
\end{lem}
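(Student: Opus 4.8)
The plan is to establish full faithfulness by exhibiting an explicit inverse to the action of $P$ on hom-sets, using the fact that polynomial functors and their arities are determined ``Yoneda-theoretically'' by the data of the polynomial. First I would recall that by \cref{opfibration_exponentiable} and the explicit description of $P_d$ preceding \cref{polynomial_functors}, a fibrous polynomial $p$ from $\C'$ to $\C$ has $P(p)(X)$ with $c$-cells $\coprodl_{b \in \B_c} Fun_{/\C'}(\Phi_p(b),X)$, and in particular $P(p)$ evaluated at a representable $y(c')$ over $\C'$ recovers (the fibers of) $\Phi_p$, while $P(p)(*)$ recovers the discrete fibration $p_3$. Thus the polynomial $p$ can be reconstructed, up to the equivalences $DFib \simeq \ch$ and the Grothendieck correspondence, from the functor $P(p)$ together with its values on these special objects.

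The key steps, in order: (1) Faithfulness: given a cartesian morphism $f\colon p \to q$ inducing the zero... rather, the \emph{same} natural transformation $P(f) = P(g)$ as another cartesian $g$, I would evaluate the induced transformation at $*$ and at representables $y(c')$ to recover $u_1$ (the component on the ``$\B$'' part) and the natural iso $\Phi_p \cong \Phi_q \circ u_1$ respectively; since a cartesian morphism of polynomials is determined by exactly this data (the square with $u_0$ being a pullback forces $u_0$ once $u_1$ is known), $f = g$. (2) Fullness: given a cartesian natural transformation $\theta\colon P(p) \Rightarrow P(q)$, I would define $u_1 \colon \B \to \B'$ over $\C$ by the component $\theta_*$ (using that $P(p)(*) = p_3$, $P(q)(*) = q_3$ as discrete fibrations, so a map between them over $\C$ is a functor $\B \to \B'$ over $\C$ — here cartesianness of $\theta$ is what guarantees the naturality squares over varying $X$ collapse appropriately). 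Then the component of $\theta$ at representables, together with cartesianness, yields a natural isomorphism $\Phi_p(b) \cong \Phi_q(u_1(b))$ in $\Cat/\C'$, natural in $b$; via the Grothendieck correspondence this is exactly a pullback square $\soint E_p \cong u_1^*(\soint E_q)$, giving $u_0$ and the full cartesian morphism $f$ with $P(f) = \theta$. (3) Check $P(f) = \theta$: unwind the definition of the induced transformation (the displayed composite through Beck--Chevalley and the counit $\epsilon$) and confirm it agrees with $\theta$ on $c$-cells for all $X$ — this reduces, by the explicit formula for $P(p)(X)_c$, to the statement that $\theta$ is determined by its restriction to $*$ and representables, which is precisely the content of cartesianness together with the coproduct-of-representables form of the functors.

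The main obstacle I expect is step (3), verifying that the abstractly-defined transformation $P(f)$ really equals the given $\theta$: one must check that the Beck--Chevalley isomorphism and the counit $\epsilon$ conspire, on the explicit description $\coprodl_{b} Fun_{/\C'}(\Phi_p(b),X)$, to act exactly as ``reindex $b$ along $u_1$ and precompose $\Phi$-maps with the isomorphism $\Phi_p(b) \cong \Phi_q(u_1 b)$.'' This is the kind of coherence bookkeeping the paper elsewhere avoids via polynomials, so here it must be faced head-on, but it is tractable because $P_d$ (and hence $P$ restricted to the relevant subcategory) has the concrete pointwise formula from \cref{polynomial_functors}. A cleaner alternative, which I would pursue if the direct computation becomes unwieldy, is to factor the claim through \cref{rep_local_equivalence}: for \emph{familial} polynomials of the form $\g(S,E)$, $P_d \circ \g$ agrees with $H$ (\cref{composes_to_H}), which is fully faithful by \cref{rep_local_equivalence}; then one reduces the general very-fibrous/fibrous case to the familial case using the familial replacement $|p|$ from \cref{all_familial}, noting $P_d(p) \cong P_d(|p|)$ naturally and that cartesian morphisms are likewise compatible with familial replacement. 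Either route establishes that $P$ reflects and creates all cartesian natural transformations, hence is fully faithful.
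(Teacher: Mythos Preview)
Your approach differs substantially from the paper's. The paper does not argue directly: it observes that although $\Cat$ is not locally cartesian closed, every functor $\C \to 1$ is exponentiable, so $\Cat/\C$ carries the tensoring and $\Cat$-enrichment described in \cite[1.3]{GambinoKock}, which suffices to replicate the proof of \cite[Proposition~2.9]{GambinoKock} verbatim, together with the remark that strict natural transformations between $\Cat$-enriched functors are automatically strong. Your proposal instead works out the correspondence by hand, evaluating at the terminal object to extract $u_1$ and at further test objects to extract the isomorphism of fibers --- essentially redoing Gambino--Kock's argument concretely rather than invoking it. Both routes are valid; the paper's is shorter and defers the work to the literature, while yours makes the mechanism visible (and is closer in spirit to the explicit argument the paper gives later for \emph{vertical} morphisms in \cref{vertical_ff}).

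One scope issue is worth flagging. The lemma as stated concerns all of $\Poly(\C',\C)_\times$, i.e.\ polynomials with $p_2$ merely exponentiable, but your argument leans on $\Phi_p$ and the formula $\coprod_b \Fun_{/\C'}(\Phi_p(b),X)$, which requires $p_2$ to be an opfibration. Your alternative route through familial replacement is similarly restricted. This is enough for every later use of the lemma in the paper (only fibrous polynomials ever appear), but it does not prove the statement as written; the Gambino--Kock argument uses only exponentiability and so covers the full generality. A related minor point: evaluating at representables $y(c')$ does not straightforwardly recover $\Phi_p(b)$, since $\Fun_{/\C'}(\Phi_p(b),\C'/c')$ is a category of cones rather than a fiber; the cleaner choice of test object (and the one implicit in Gambino--Kock) is $q_1 \colon \A' \to \C'$ itself, from which $u_0$ is read off directly.
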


\begin{proof}
While $\Cat$ is not locally cartesian closed, as every functor $\C \to 1$ is exponentiable, $\Cat/\C$ has the same tensoring and enrichment as described in \cite[1.3]{GambinoKock}, which suffices to replicate the proof of \cite[Proposition 2.9]{GambinoKock} in this setting. It then remains only to observe that any strict natural transformation between $\Cat$-enriched functors is strong, so any cartesian transformation between polynomial functors is uniquely represented by a cartesian morphism of polynomials.
\end{proof}

As $\ch \simeq DFib(\C)$ forms a full subcategory of $\Cat/\C$, $P_d$ also sends cartesian morphisms to cartesian natural transformations, which for $(u_0,u_1)$ as above unwinds to the map
$$P_d(p)(X)_c \cong \coprod_{b \in \B_c} Fun_{/\C'} \blp \Phi_p(b),X \brp \to \coprod_{b' \in \B'_c} Fun_{/\C'} \blp \Phi_q(b'),X \brp \cong P_d(q)(X)$$
sending $\blp b,f \colon \Phi_p(b) \to X \brp$ to $\blp u_1(b), \Phi_q(u_1(b)) \atol{(u_0)_b^{-1}} \Phi_p(b) \atol{f} X \brp$.

\begin{defn}
Given a morphism $(\phi^S,\phi^E) \colon (S,E) \to (S',E')$ of familial representations from $\C'$ to $\C$, we have the following cartesian morphism of familial polynomials:
\bctikz
& \soint E \arrow{dl}[swap]{p_1} \arrow{r}{p_2} \arrow{dd}[swap]{\soint \phi^E} \arrow[phantom, shift left=1]{ddr}[very near start]{\lrcorner} & \sint S \arrow{dr}{p_2} \arrow{dd}{\sint \phi^S} \\
\C' & & & \C \\
& \soint E' \arrow{ul}{q_1} \arrow{r}[swap]{q_2} & \sint S' \arrow{ur}[swap]{q_3}
\ectikz
where the square is a pullback as $\phi^E$ restricts to an isomorphism $E(t) \cong E'(\phi^S(t))$ on the fibers of $p_2,q_2$.
\end{defn}

Every cartesian morphism between parallel familial polynomials arises uniquely in this manner, as functors $u_0 \colon \sint S \to \sint S'$ over $\C$ are in bijective correspondence with maps $u_0 \colon S \to S'$ in $\ch$, and a functor $u_1 \colon \soint E \to \soint E'$ commuting with the rest of such a diagram restricts to natural isomorphisms $\sint Et \to \sint E'u_0(t)$ over $\C'$, corresponding to isomorphisms $Et \to E'u_0(t)$ in $\chp$. 

$\g$ therefore extends to a fully faithful functor from $\Rep(\C',\C)$. Denoting by $\Poly^{vf}(\C',\C)_\times$ the category of very fibrous polynomials from $\C'$ to $\C$ and cartesian morphisms between them, we have now established the following.

\begin{prop}\label{composes_to_H}
$H \colon \Rep(\C',\C) \to \Fam(\C',\C)$ factors as
$$\Rep(\C',\C) \atol{\g} \Poly^{vf}(\C',\C)_\times \atol{P_d} \Fam(\C',\C),$$
where $\g$ is fully faithful and $P_d$ is surjective on objects.  
\end{prop}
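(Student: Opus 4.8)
The plan is to assemble the constructions of \cref{polynomials} into the claimed factorization. First I would check that $\g$ is a well-defined functor $\Rep(\C',\C)\to\Poly^{vf}(\C',\C)_\times$: on objects this is \cref{familial_polynomial}, which turns each representation $(S,E)$ into the familial --- hence very fibrous --- polynomial $\g(S,E)$ with middle category $\sint S$ and left-hand category $\soint E$; on a morphism $(\phi^S,\phi^E)\colon(S,E)\to(S',E')$ it is the cartesian morphism of polynomials $(\sint\phi^S,\soint\phi^E)$ introduced just before the statement, whose naturality square is a pullback precisely because $\phi^E$ restricts to an isomorphism $Et\cong E'\phi^S(t)$ on the fibers of $p_2,q_2$. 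Functoriality of $\g$ then follows from functoriality of $\sint(-)$ and $\soint(-)$ (\cref{int_functoriality}, \cref{fibration_of_fibration}).

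Next I would establish that $\g$ is fully faithful, which is exactly the observation recorded immediately before the statement: a cartesian morphism $\g(S,E)\to\g(S',E')$ of familial polynomials amounts to a functor over $\C$, equivalently a map $S\to S'$ in $\ch$, together with a functor $\soint E\to\soint E'$ compatible with the rest of the diagram, which restricts fiberwise to natural isomorphisms $\sint Et\cong\sint E'u_0(t)$ over $\C'$, equivalently isomorphisms $Et\cong E'u_0(t)$ in $\chp$ --- precisely the data of a morphism $(S,E)\to(S',E')$, with the correspondence inverse to $\g$ on hom-sets. On the other side, $P_d$ restricts to a functor $\Poly^{vf}(\C',\C)_\times\to\Fam(\C',\C)$ because $P_d(p)$ is a familial functor for every very fibrous $p$ by \cref{all_familial} and $P_d$ carries cartesian morphisms of polynomials to cartesian natural transformations, as noted after \cref{cartesian_ff}; surjectivity of $P_d$ on objects is \cref{familial_polynomial}, which exhibits any familial functor with representation $(S,E)$ as $P_d(\g(S,E))$ up to the isomorphism of \cref{polynomial_functors}.

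It then remains to identify $P_d\circ\g$ with $H$. On objects this is once more \cref{familial_polynomial} together with \cref{polynomial_functors}, giving a natural isomorphism $P_d(\g(S,E))(X)_c\cong\coprodl_{t\in Sc}\Hom_\chp(Et,X)=\Hse(X)_c$. On morphisms, the cartesian transformation induced by $\g(\phi)$ is the one displayed after \cref{cartesian_ff}, sending $\blp t,f\colon Et\to X\brp$ to $\blp\phi^S(t),\,E'\phi^S(t)\atol{(\phi^E_t)^{-1}}Et\atol{f}X\brp$, which is verbatim the description of $H(\phi)$ in the proof of \cref{rep_local_equivalence} as ``precomposition with the isomorphism $\phi^E_t$''. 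The only real obstacle is bookkeeping: one must keep the chain of identifications --- of $\soint E$ with the relevant Grothendieck construction, of $\Phi_{\g(S,E)}$ with $E$, of the fiberwise components of $\soint\phi^E$ with those of $\phi^E$ --- coherent enough that the two descriptions of the induced natural transformation are manifestly equal. No genuine mathematical difficulty arises once the machinery of \cref{polynomials} is available; that bookkeeping is exactly what replaces the naturality computations a direct proof of \cref{monad_representation} would otherwise demand.
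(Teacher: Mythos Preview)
Your proposal is correct and takes essentially the same approach as the paper: the proposition is stated in the paper immediately after the relevant constructions with the phrase ``we have now established the following,'' so its proof \emph{is} the preceding discussion of $\g$ on objects (\cref{familial_polynomial}), on morphisms (the cartesian square with $\sint\phi^S$, $\soint\phi^E$), full faithfulness (the paragraph noting every cartesian morphism between familial polynomials arises uniquely from a morphism of representations), and the identification $P_d\g = H$ via \cref{polynomial_functors}. Your write-up is a faithful and somewhat more explicit assembly of exactly these ingredients.
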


\begin{rem}\label{reflects_isos}
As $\g$ is fully faithful and essentially surjective onto familial polynomials, the full subcategory $\Poly^{fam}(\C',\C)_\times$ of familial polynomials and cartesian morphisms is equivalent to $\Rep(\C',\C)$ and therefore $\Fam(\chp,\ch)$.
\end{rem}

\subsection{Vertical Morphisms}

While they are not included in the bicategory of polynomials in $\Cat$ defined in \cite{WeberPoly}, \cite{GambinoKock} describe a larger bicategory of polynomials and morphisms between them, in the more restrictive setting of a locally cartesian closed category. The additional morphisms are sent by an extension of $P$ to non-cartesian transformations of polynomial functors,
and morphisms from $p$ to $q$ admit a factorization system % of what sort? unique?
with right class the cartesian morphisms and left class the \emph{vertical morphisms}, given by diagrams of the following form:
\bctikz
 & \A \arrow{dl}[swap]{p_1} \arrow{r}{p_2} & \B \arrow[equals]{dd} \arrow{dr}{p_3} \\
\C' & & & \C \\
 & \A' \arrow{ul}{q_1} \arrow{uu}{v} \arrow{r}[swap]{q_2} & \B \arrow{ur}[swap]{p_3}
\ectikz
which for fibrous $p,q$ amounts to (by \cref{int_functoriality}) a lax natural transformation $\Phi_q \to \Phi_p$ in $\Cat//\C'$ with components in $\Cat/\C$ and lax structure lying over identities in $\C$.

If $v$ is exponentiable there is a transformation $P(p) \to P(q)$ given by
$$\sum\nolimits_{p_3}\prod\nolimits_{p_2}\delt\nolimits_{p_1} \atol{\eta} \sum\nolimits_{p_3}\prod\nolimits_{p_2}\prod\nolimits_v \delt\nolimits_v \delt\nolimits_{p_1} \cong \sum\nolimits_{p_3} \prod\nolimits_{q_2} \delt\nolimits_{q_1},$$
where $\eta$ is the unit of the adjunction $\delt_v \dashv \prod_v$ and the second isomorphism comes from pseudofunctoriality. However, when $p_2,q_2$ are opfibrations, the desired transformation can be defined for any $v$.

\begin{lem}\label{vertical_ff}
(Analogue of \cite[Proposition 2.8]{GambinoKock}) For fibrous polynomials $p,q$ as above, natural transformations $P(p) \Rightarrow P(q) \colon \Cat/\C' \to \Cat/\C$ that restrict to the identity on $\id_{\C'}$ correspond bijectively with maps $v$ as above.
\end{lem}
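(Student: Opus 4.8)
The strategy is to adapt the proof of \cite[Proposition 2.8]{GambinoKock}, replacing the locally cartesian closed exponentials used there by the explicit description of $\prod_{p_2}\delt_{p_1}$ for an opfibration $p_2$ furnished by \cref{opfibration_exponentiable}. Recall from that description that $\prod_{p_2}\delt_{p_1}X$ has objects the pairs $(b \in \Ob\B, f \colon \Phi_p(b) \to X)$ with $f$ over $\C'$, morphisms the pairs $(i \colon b \to b', \sigma \colon f \Rightarrow f'\Phi_p(i))$ with $\sigma$ over $\C'$, and comes with a canonical projection to $\B$; applying $\sum_{p_3}$ then gives $P(p)(X)$ over $\C$.

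\textbf{From $v$ to a transformation.} First I would make the assignment $v \mapsto \theta_v$ explicit. A vertical morphism $v \colon \A' \to \A$ commutes over both $\C'$ and $\B$, so by \cref{int_functoriality} it restricts over each $b \in \Ob\B$ to a functor $v_b \colon \Phi_q(b) \to \Phi_p(b)$ in $\Cat/\C'$, lax natural in $b$ with lax structure maps $\phi_i$ lying over identities of $\C$. Precomposition $(-)\circ v_b$ defines functors $Fun_{/\C'}(\Phi_p(b),X) \to Fun_{/\C'}(\Phi_q(b),X)$ natural in $X \in \Cat/\C'$; using the $\phi_i$ to handle the morphisms of $\prod_{p_2}\delt_{p_1}X$ lying over morphisms $i$ of $\B$, these assemble into a functor $\theta_v \colon P(p)(X) \to P(q)(X)$ over $\C$, natural in $X$. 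Since $(\Phi_p(b)\to\C')\circ v_b$ is again the structure map $\Phi_q(b)\to\C'$, the transformation $\theta_v$ restricts to the identity on $\id_{\C'}$. (When $v$ is exponentiable one checks this agrees with the $\eta$-based transformation described before the lemma.)

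\textbf{From a transformation back to $v$.} Conversely, given $\theta \colon P(p) \Rightarrow P(q)$ restricting to the identity on $\id_{\C'}$, the key point is that $\id_{\C'}$ is terminal in $\Cat/\C'$ and $P(p)(\id_{\C'}) \cong P(q)(\id_{\C'}) \cong (p_3 \colon \B \to \C)$ (as $\delt_{p_1}\C' = \A$, $\prod_{p_2}\A = \id_\B$, $\sum_{p_3}\id_\B = p_3$, and likewise for $q$). Hence naturality of $\theta$ at the unique map $X \to \C'$ in $\Cat/\C'$, together with $\theta_{\id_{\C'}} = \id_\B$, shows that $\theta_X$ commutes with the canonical projections $P(p)(X) \to \B$ and $P(q)(X) \to \B$. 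Therefore $\theta_X$ restricts over each $b \in \Ob\B$ to a functor $Fun_{/\C'}(\Phi_p(b),X) \to Fun_{/\C'}(\Phi_q(b),X)$ natural in $X$; since $\Cat/\C'$ carries the tensoring and enrichment used in \cref{cartesian_ff}, the $\Cat$-enriched Yoneda lemma identifies such a family with a functor $v_b \colon \Phi_q(b) \to \Phi_p(b)$ over $\C'$, and compatibility of $\theta$ with the $\B$-indexed morphisms of $\prod_{p_2}\delt_{p_1}X$ translates into lax naturality of $(v_b)_{b}$ over identities. By \cref{int_functoriality} this is precisely a vertical morphism $v \colon \A' \to \A$, and one verifies $\theta = \theta_v$ and that the two constructions are mutually inverse.

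\textbf{Main obstacle.} The delicate step is the last translation: checking that the coherence constraints imposed on $\theta$ by the morphisms of $\prod_{p_2}\delt_{p_1}X$ lying over $i \colon b \to b'$ in $\B$ (the transformations $\sigma \colon f \Rightarrow f'\Phi_p(i)$) correspond \emph{exactly}, under the Yoneda identification, to the lax-naturality data $\phi_i$ of a functor $\soint\Phi_q \to \soint\Phi_p$ as in \cref{int_functoriality} — with no extra conditions surviving on either side, and with functoriality in $i$ matched up. Everything else (naturality in $X$, the reduction on $\id_{\C'}$, and mutual inverseness of $v \mapsto \theta_v$ and $\theta \mapsto v$) is a routine unwinding of the explicit formula for $\prod_{p_2}\delt_{p_1}$.
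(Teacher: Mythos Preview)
Your proposal is correct and follows essentially the same route as the paper: reduce to transformations over $\B$ using terminality of $\id_{\C'}$, apply Yoneda fiberwise to extract the functors $v_b$, and then match up the action on morphisms of $\prod_{p_2}\delt_{p_1}X$ over $i \colon b \to b'$ with the lax naturality data $\phi_i$. The one place where the paper goes further than your outline is precisely your ``main obstacle'': rather than leaving the morphism step as something to be checked, the paper carries out the Yoneda-style argument explicitly by evaluating at $X = \Phi_p(b')$, $u = \Phi_p(i)$, $u' = \id_{\Phi_p(b')}$ and the identity transformation on $\Phi_p(i)$, thereby reading off $\phi_i$ directly and observing that functoriality in $i$ follows from the coherence of $\beta$.
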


\begin{proof}
As $\id_{\C'}$ is terminal in $\Cat/\C'$ and $P(p)(\id_{\C'}) = P(q)(\id_{\C'}) \cong \B$, any such transformation lifts uniquely to a natural transformation 
$$\prod\nolimits_{p_2}\delt\nolimits_{p_1} \to \prod\nolimits_{q_2}\delt\nolimits_{q_1} \colon \Cat/\C' \to \Cat/\B,$$
so it suffices to show that such transformations $\beta$ correspond bijectively with lax natural transformations $\phi \colon \Phi_q \to \Phi_p$ strict over $\C$.

$\beta \colon \prod_{p_2}\delt_{p_1} \to \prod_{q_2}\delt_{q_1}$, restricted to the fiber over $b \in \Ob \B$, is a map
$$Fun_{/\C} \blp \Phi_p(b),X \brp \to Fun_{/\C} \blp \Phi_q(b),X \brp$$
natural in $X$, which by Yoneda is uniquely determined by a functor $\phi_b \colon \Phi_q(b) \to \Phi_p(b)$ over $\C$ (this is essentially the argument in \cite[Proposition 2.8]{GambinoKock}). To extend this correspondence to morphisms, for each $i \colon b \to b'$ in $\B$ $\beta$ requires a mapping, natural in $X$, from transformations as pictured below (right side) to natural transformations filling in the outer diagram (all over $\C$):
\bctikz 
\Phi_q(b) \rar{\phi_b} \ar{dd}[swap]{\Phi_q(i)} & \Phi_p(b) \ar{dd}[swap]{\Phi_p(i)} \ar{dr}{u} \ar[Rightarrow, shift left=6, shorten=14]{dd} \\
& & X \\
\Phi_q(b') \rar[swap]{\phi_{b'}} & \Phi_p(b) \ar{ur}[swap]{u'}
\ectikz
Such a mapping could arise from precomposition with a natural transformation $\phi_i \colon \Phi_p(i)\phi_b \Rightarrow \phi_{b'}\Phi_q(i)$ (satisfying coherence conditions corresponding to functoriality in $i$). In a Yoneda style argument, we can set 
$$X = \Phi_p(b'), \quad u = \Phi_p(i), \quad u' = \id_{\Phi_p(b')}$$
in the diagram above and apply such a mapping to the identity transformation on $\Phi_p(i)$ to recover a transformation $\Phi_p(i)\phi_b \Rightarrow \phi_{b'}\Phi_q(i)$, and it is straightforward to check that these transformations satisfy the desired coherence conditions and provide a bijective correspondence between such $\beta$'s and $\phi$'s.
\end{proof}

\begin{rem}
The analogous extension of $P_d$ sends $v$ to the natural transformation
$$P_d(p)(X)_c \cong \coprod_{b \in \B_c} Fun_{/\C'} \blp \Phi_p(b),X \brp \to \coprod_{b' \in \B_c} Fun_{/\C'} \blp \Phi_q(b'),X \brp \cong P_d(q)(X)$$
mapping $\blp b,f \colon \Phi_p(b) \to X \brp$ to $\blp b,\Phi_q(b) \atol{v_b} \Phi_p(b) \atol{f} X \brp$.
\end{rem}

Just as in \cref{cartesian_ff}, the tensoring of $\Cat/\C$ over $\Cat$ lets us replicate the proof of \cite[Proposition 2.4]{GambinoKock} in this setting.  Therefore, as all strict natural transformations in a $\Cat$-enriched category are strong, %make sure of this, check other place it's claimed
the functor $Poly(\Cat/\C',\Cat/\C) \to \Cat/\C$ given by evaluating a polynomial functor or cartesian transformation at the terminal object $\id_{\C'}$ is a fibration. The cartesian maps with respect to this fibration are the cartesian transformations, and vertical maps are those whose component at $\id_{\C'}$ is the identity.

The factorization system on cartesian and vertical maps with respect to this fibration provides a canonical means of commuting past each other cartesian and vertical morphisms of polynomials from $\C'$ to $\C$, which uniquely represent cartesian and vertical transformations, respectively, of the corresponding functors (\cref{cartesian_ff}, \cref{vertical_ff}). Any composite of cartesian and vertical morphisms then has a unique factorization as a vertical morphism followed by a cartesian morphism, which suffices to define the category $\Poly^f(\C',\C)$ of fibrous polynomials from $\C'$ to $\C$ and general morphisms between them of this form, with full subcategory $\Poly^{vf}(\C',\C)$ of very fibrous polynomials.

\begin{rem}\label{vertical_iso}% keep an eye on this
The transformation induced by a vertical morphism is only cartesian when the map $v$ is an isomorphism, in which case it is identified with the cartesian morphism with $(u_0,u_1)$ given by $(v^{-1},id)$.
\end{rem}

We have now extended $P$ to a functor $\Poly^f(\C',\C) \to Poly(\Cat/\C',\Cat/\C)$, and $P_d$ to a functor $\Poly^{vf} \to Fam(\chp,\ch)$.

\begin{ex}\label{familial_identitor}
Consider the familial polynomial $\g(S^0,E^0)$ of \cref{familial_identity_polynomial}. We have the following vertical morphism $\epsilon$ from $\g(S^0,E^0)$ to the identity polynomial on $\C$
\bctikz
& \C^\to \arrow{dl}[swap]{dom} \arrow{r}{cod} & \C \arrow[equals]{dr} \arrow[equals]{dd}{} \\
\C & & & \C \\
& \C \arrow[equals]{ul}{} \arrow[equals]{r}[swap]{} \arrow{uu}{\id_{(-)}} & \C \arrow[equals]{ur}[swap]{}
\ectikz
where the vertical map (which is not exponentiable) sends an object $c$ in $\C$ to its identity morphism $\id_c$ in $\C^\to$. %This is not Conduche as the identity square on a map f (as a morphism between identity arrows) factors as id -(id,f)-> f -(f,id)-> id and this has no lift
The transformation 
$$P_d(\epsilon) \colon P_d \blp \g(S^0,E^0) \brp (X)_c \cong Fun_{/\C} \blp \sint y(c),X \brp \to Fun_{/\C}(*,X) \cong P_d(1_\C)(X)_c$$
is induced by the map $* \to \sint y(c) \cong \C/c$ picking out $\id_c$, and by Yoneda this is an isomorphism. $\g(S^0,E^0)$ is the familial replacement of $1_\C$, in the sense of \cref{all_familial}.
\end{ex}

\begin{ex}\label{familial_productor}
For a quasi-familial polynomial $p$, the map $\pi \colon \A \to |\A|$ forms a vertical morphism of polynomials from $|p|$ to $p$, which $P_d$ sends to an isomorphism by \cref{all_familial}, $\pi$ being the unit of the adjunction between two sided fibrations and discrete two sided fibrations. This will be the key to comparing the compositions of familial representations and familial polynomials.%is this conduche?
\end{ex}

\subsection{Bicategory Structure}

\cite{WeberPoly} constructs a bicategory whose objects are small categories with morphism categories $\Poly(\C',\C)_\times$, and a bifunctor $\Poly_\times \to \CAT$ sending $\C$ to $\Cat/\C$ and each polynomial to the corresponding polynomial functor, likewise for cartesian transformations. The bicategory structure of polynomials in \cite{GambinoKock} which also includes vertical morphisms, while only claimed for locally cartesian closed categories, agrees on cartesian morphisms with that of \cite{WeberPoly}. 

Extending Weber's bifunctor to a bicategory of polynomials on $\Cat$ with vertical morphisms would require, at a minimum, restricting vertical morphisms to those whose map $\A' \to \A$ is exponentiable. To avoid this restriction, we instead work with the sub-bicategory of (very) fibrous polynomials, between which all vertical morphisms induce unique transformations of the corresponding polynomial functors. 

\begin{lem}\label{fibrous_composite}
Fibrous, very fibrous, and quasi-familial polynomials are each closed under polynomial composition.
\end{lem}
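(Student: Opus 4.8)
The plan is to reduce the composability of each of the three classes to the known composability of opfibrations (\cref{opfibration_composite}), of discrete fibrations (standard), and of (discrete) two sided fibrations, all organized through the ``middle'' category of a composite polynomial. Recall that the composite of polynomials $p$ from $\C'$ to $\C''$ and $q$ from $\C''$ to $\C$ is built (following \cite{WeberPoly}) by forming the distributivity pullback of $q_2$ against $p_1$ and then pulling back and composing along the legs; the key constituent maps of the composite $qp$ are (i) a composite of pullbacks of $q_2$, which by \cref{opfibration_pullback} and \cref{opfibration_composite} is again an opfibration, giving the middle leg $(qp)_2$, and (ii) a composite of a pullback of $p_3$ with $q_3$ on the outgoing leg, which is a composite of discrete fibrations hence a discrete fibration, giving $(qp)_3$. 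This already shows that \textbf{fibrous} polynomials are closed under composition and that, combined with discreteness of $(qp)_3$, so are \textbf{very fibrous} polynomials: the only new thing to check is that pulling back $q_2$ along maps over $\C''$ keeps it an opfibration (\cref{opfibration_pullback}) and that $p_3$ pulled back along a functor remains a discrete fibration (discrete fibrations are pullback-stable).

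For \textbf{quasi-familial} polynomials the extra content is that the pair $((qp)_1,(qp)_2)$ forms a two sided fibration from $\C'$ to the middle category, i.e. that $\Phi_{(qp)_2}$ lifts to $Fib(\C')$. Here I would use the explicit description of composites of opfibrations from the proof of \cref{opfibration_composite}: the fiber of $(qp)_2$ over an object of the middle category is built as a Grothendieck construction $\soint(\Phi_{p_2} \circ J)$ for suitable $J$ into $\C''$ (roughly, reindexing $\Phi_{p_2}$ along the fibers of $q_2$ and the distributivity data), and then \cref{fibration_of_fibration} — which says $\soint$ extends to a functor $\Cat//Fib(\C') \to Fib(\C')$ — shows that this fiber carries a fibration to $\C'$ compatible across morphisms, because $\Phi_{p_2}$ was already valued in $Fib(\C')$ for $p$ quasi-familial. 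The compatibility of these fibrations as the base object varies (i.e. that we genuinely get a functor into $Fib(\C')$ rather than just fiberwise data) is exactly what \cref{fibration_of_fibration} and the functoriality of $\soint$ over $\Cat//\Cat$ (\cref{int_functoriality}) are there to provide, applied to the diagram in $\Cat//Fib(\C')$ assembled from the distributivity pullback.

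Finally, \textbf{familial} polynomials (those with $\Phi_{p_2}$ valued in $DFib(\C')$): composability is \emph{not} asserted in this lemma — it states only fibrous, very fibrous, and quasi-familial — and indeed the composite of familial polynomials is only quasi-familial in general, with \cref{all_familial}/\cref{familial_productor} used elsewhere to pass to the familial replacement. So for this lemma I would not claim closure of the familial class.

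The main obstacle I expect is the bookkeeping in the quasi-familial case: writing down the distributivity-pullback construction of $qp$ explicitly enough to exhibit $\Phi_{(qp)_2}$ as $\soint$ of a $Fib(\C')$-valued functor, and checking that the fibration structures on the fibers are natural in the base — this is where one must combine \cref{opfibration_exponentiable} (the explicit $\prod_{q_2}$), \cref{opfibration_composite}, and \cref{fibration_of_fibration} carefully, and it is the one place where the proof is more than a one-line appeal to pullback-stability and composability of a single class of maps.
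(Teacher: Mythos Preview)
Your overall strategy matches the paper's: reduce to pullback-stability and composability of opfibrations for the middle leg, then handle the right leg and the two-sided structure separately. The quasi-familial argument via \cref{fibration_of_fibration} is exactly what the paper does, and your observation that the familial class is not claimed to be closed is correct and important.

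There is, however, a genuine gap in your very fibrous case. You write that $(qp)_3$ is ``a composite of a pullback of $p_3$ with $q_3$'' and justify it by ``discrete fibrations are pullback-stable.'' But the map $\bar q_3 \colon \bar\B \to \B$ in the composite diagram is \emph{not} obtained by an ordinary pullback: it is the right-hand leg of the \emph{distributivity} pullback, i.e.\ it is $\prod_{p_2}$ applied to (a pullback of) $q_3$. Pullback-stability of discrete fibrations does not touch this; you need the separate fact that $\prod_{p_2}$ preserves discrete fibrations when $p_2$ is an opfibration. That is precisely the content of \cref{preserves_fibrations}, which the paper invokes at this step. You clearly know $\prod$ is in play---you cite \cref{opfibration_exponentiable} in the quasi-familial paragraph---but it never enters your argument for $(qp)_3$, so as written that case is incomplete.

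A smaller imprecision: the middle leg $(qp)_2$ is a composite of a pullback of $p_2$ (from the distributivity square) with a pullback of $q_2$, not ``a composite of pullbacks of $q_2$'' alone. This doesn't affect the conclusion since both are opfibrations by \cref{opfibration_pullback}, but it's worth stating accurately.
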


\begin{proof}
Recall (\cite[1.11]{GambinoKock}, \cite[Definition 3.1.7]{WeberPoly}) that for composable polynomials $p,q$ their composite is defined as the outer diagram below, where all squares are pullbacks and the pentagon on the right is a distributivity pullback:
\bctikz % add in pullback signs, or don't
 & & & \bar \A' \arrow{rr}{\bar q_2} \arrow{ddll}[swap]{\bar p_1} & & \bar \A \arrow{r}{\bar p_2} \arrow{dl}[swap]{} & \bar \B \arrow{dd}{\bar q_3} \\
 & & \;\; & & \bar \B' \arrow{dl}[swap]{} \arrow{dr}{} \\
 & \A' \arrow{rr}{q_2} \arrow{dl}[swap]{q_1} & & \B' \arrow{dr}{q_3} & & \A \arrow{r}{p_2} \arrow{dl}[swap]{p_1} & \B \arrow{dr}{p_3} \\
\C'' & & & & \C' & & & \C
\ectikz

This definition requires a choice of pullbacks and distributivity pullbacks in $\Cat$, which is provided by the constructions in \cref{opfibration_pullback} and \cref{opfibration_exponentiable}. 

For $p,q$ fibrous, $\bar p_2\bar q_2$ is an opfibration by \cref{opfibration_pullback} and \cref{opfibration_composite}. If they are very fibrous, $\bar q_3$ is a discrete fibration by \cref{preserves_fibrations}, hence so is $p_3\bar q_3$.

Now assume $p,q$ are quasi-familial. $\bar \B$ is precisely $P_d(p)(\B')$, so its objects are of the form $\blp b,f \colon \Phi_p(b) \to \B' \brp$, where $f$ commutes over $\C'$.  Using, \cref{opfibration_pullback} and \cref{opfibration_composite}, $\bar p_2\bar q_2$ corresponds to the functor $\Phi \colon \bar \B \to \Cat$ sending $(b,f)$ to 
$$\bigint \blp\Phi_p(b) \atol{f} \B' \atol{\Phi_q} Fib(\C'')\brp,$$ 
where $\Phi$ factors through $Fib(\C'')$ by \cref{fibration_of_fibration} as a morphism in $\bar \B$ from $(b,f)$ to $(b',f')$ is a natural transformation $f \Rightarrow f' \circ \Phi_p(i)$ over $\C'$ for $i \colon b \to b'$ (which in fact must be the identity by discreteness of $q_3$). Therefore $(q_1\bar p_1,\bar p_2 \bar q_2)$ forms a two sided fibration.
\end{proof}

\begin{rem}\label{explicit_composite}
As discussed above, in the composite fibrous polynomial $\bar \B$ consists of diagrams in $\B'$ indexed by a fiber of $p_2$, and the fiber in $\bar \A'$ over such a diagram is the lax colimit of the corresponding fibers of $q_2$.  In the next section we use the comparison map from each such lax colimit to a strict colimit to analyze composites of familial polynomials, which are not closed under composition.
\end{rem}

\begin{thm}\label{poly_is_bicategory}
Small categories, fibrous polynomials between them, and general morphisms of polynomials form a bicategory $\Poly^f$ under polynomial identities and composition, with a bifunctor $P \colon \Poly^f \to \CAT$ sending $\C$ to $\Cat/\C$ and acting on morphism categories by $P \colon \Poly^f(\C',\C) \to \CAT(\Cat/\C',\Cat/\C)$. Very fibrous polynomials form a sub-bicategory $\Poly^{vf}$.
\end{thm}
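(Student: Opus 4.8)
The plan is to bootstrap everything off of Weber's bicategory of polynomials in $\Cat$ with cartesian $2$-cells, together with the local faithfulness of $P$, so as to avoid checking the pentagon and triangle identities directly.

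\textbf{Setting up the data.} The identity $1$-cell on $\C$ is the identity polynomial $1_\C$, which is very fibrous since $\id_\C$ is simultaneously an opfibration and a discrete fibration. Horizontal composition of $1$-cells is polynomial composition, which by \cref{fibrous_composite} preserves the fibrous and very fibrous classes and is computed using the explicit pullbacks of \cref{opfibration_pullback} and distributivity pullbacks of \cref{opfibration_exponentiable}. The unitors and associators are the canonical invertible comparison morphisms between the iterated (distributivity) pullbacks from \cite{WeberPoly}; being \emph{cartesian} $2$-cells they already lie in $\Poly^f(\C',\C)$ and in $\Poly^{vf}(\C',\C)$. The hom-categories $\Poly^f(\C',\C)$ and $\Poly^{vf}(\C',\C)$, and the functor $P$ on each of them, were already constructed before the statement using the vertical--cartesian factorization system together with \cref{cartesian_ff} and \cref{vertical_ff}.

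\textbf{Horizontal composition of general $2$-cells.} Weber supplies horizontal composition of cartesian $2$-cells, so by the vertical--cartesian factorization of a general morphism it remains to whisker a vertical morphism $v$ (one leaving the right-hand two objects of the span fixed) on either side by a $1$-cell $r$, obtaining a vertical morphism of the composite polynomials, which is fibrous again by \cref{fibrous_composite}. On the level of classifying functors this is postcomposition of the lax natural transformation $\Phi_q \Rightarrow \Phi_p$ with the composite-polynomial constructions from \cref{fibrous_composite} and \cref{explicit_composite}, and is visibly functorial in $v$. I would then verify that $P$ carries such a whiskering to the corresponding whiskering in $\CAT$ of the induced natural transformations, using the explicit formula for $P$ on vertical morphisms; this pins the whiskering down uniquely and yields the interchange law and bilinearity of horizontal composition.

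\textbf{Pseudofunctoriality and reflection of coherence.} The structure isomorphisms $P(p)P(q) \cong P(pq)$ and $1 \cong P(1_\C)$ are those of \cite{WeberPoly} (Beck--Chevalley for the pullbacks appearing in the composite). Their naturality and the pseudofunctor unit and associativity coherences hold on cartesian $2$-cells by \cite{WeberPoly}, and on vertical $2$-cells they reduce, via the explicit description of $P$ on vertical morphisms and the compatibility of whiskering with the composite-polynomial construction, to routine chases among units and counits of the relevant adjunctions. Since $\CAT$ is a strict $2$-category and $P$ is faithful on each hom-category (\cref{cartesian_ff}, \cref{vertical_ff}, and uniqueness of the vertical--cartesian factorization), the remaining axioms for $\Poly^f$ --- naturality of the unitors and associators against general $2$-cells, and the pentagon and triangle --- follow by applying $P$, rewriting both sides with the pseudofunctor axioms, invoking the coherence of $\CAT$, and cancelling $P$; indeed for the pentagon and triangle the $2$-cells involved are all cartesian, so these equations are literally those of Weber's bicategory. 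Finally $\Poly^{vf}$ is closed under identities and composition by \cref{fibrous_composite}, and its hom-categories are full subcategories of those of $\Poly^f$, so it is a sub-bicategory.

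\textbf{Main obstacle.} The step I expect to require real work is the functoriality of the composite-polynomial construction in vertical morphisms, and its compatibility with $P$: one must show that the $\Cat$-valued functors built from lax colimits of fibers (as in \cref{explicit_composite}) depend functorially on the lax natural transformations coming from vertical morphisms, and that $P$ converts this dependence into ordinary horizontal composition of natural transformations. This is precisely the Beck--Chevalley-free part of the theory that Weber's cartesian-only treatment does not address.
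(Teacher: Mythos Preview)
Your approach is correct and close in spirit to the paper's, but diverges on the one point you flag as the ``main obstacle.'' The paper does not construct whiskering of vertical morphisms by hand and then check compatibility with $P$; instead it invokes the transport argument of \cite[Lemma~2.15]{GambinoKock} directly. Once one has the analogue of that lemma---which follows from \cref{cartesian_ff}, \cref{vertical_ff}, and the vertical--cartesian factorization (the analogue of \cite[Proposition~2.4]{GambinoKock})---the horizontal composite of two general morphisms is \emph{defined} as the unique preimage under $P$ of the horizontal composite of their images in $\CAT$, and all bicategory axioms and pseudofunctor coherences come along for free. Your ``main obstacle'' is therefore dissolved rather than solved: the explicit functoriality of the lax-colimit construction in vertical morphisms is never written down, because local full faithfulness of $P$ guarantees the needed whiskerings exist and behave correctly without exhibiting them. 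Your route would also work and has the virtue of making those whiskerings concrete, but the paper's citation of Gambino--Kock is shorter precisely because it bypasses that computation. The treatment of $\Poly^{vf}$ as a sub-bicategory is the same in both.
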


\begin{proof}
As fibrous polynomials are closed under polynomial identities and composition which agree with identities and composition of polynomial functors, it suffices to define horizontal composites of morphisms of polynomials and show that it agrees in the appropriate sense with horizontal composition in $\CAT$. These constructions can proceed exactly as in \cite{GambinoKock} by a transport argument, as by \cref{cartesian_ff}, \cref{vertical_ff}, and the analogue of \cite[Proposition 2.4]{GambinoKock} we have the analogue of \cite[Lemma 2.15]{GambinoKock}, upon which these constructions rely.

As the inclusion $\Poly^{vf}(\C',\C) \emb \Poly^f(\C',\C)$ is full, to show that $\Poly^{vf}$ is a subcategory is suffices to note that very fibrous polynomials include the identities and are closed under horizontal composition.
\end{proof}

\begin{cor}\label{poly_bifunctor}
There is a bifunctor $P_d \colon \Poly^{vf} \to \CAT$ sending a small category $\C$ to $\ch$ and all polynomials to familial functors.
\end{cor}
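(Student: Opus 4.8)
The plan is to obtain $P_d$ as the restriction of the bifunctor $P \colon \Poly^f \to \CAT$ of \cref{poly_is_bicategory} along the family of fully faithful inclusions $i_\C \colon \ch \simeq DFib(\C) \emb \Cat/\C$. Two ingredients are already in place. First, by \cref{poly_is_bicategory}, $\Poly^{vf}$ is a sub-bicategory of $\Poly^f$, so $P$ restricts to a bifunctor $\Poly^{vf} \to \CAT$ sending $\C$ to $\Cat/\C$, and the identity $1_\C$ is a very fibrous polynomial with $P(1_\C) = \id_{\Cat/\C}$. Second, by the discussion preceding \cref{polynomial_functors}, for a very fibrous polynomial $p \colon \C' \to \C$ each of $\delt_{p_1}$, $\prod_{p_2}$, $\sum_{p_3}$ preserves discrete fibrations, so there is a commuting square $P(p) \circ i_{\C'} \cong i_\C \circ P_d(p)$ with $P_d(p) \colon \chp \to \ch$; by \cref{all_familial} this $P_d(p)$ is a familial functor even though $p$ need not be a familial polynomial, and $P(1_\C) \circ i_\C = i_\C$ exhibits $P_d(1_\C) = \id_\ch$.

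It remains to produce the rest of the bifunctor structure, and here I would simply whisker each piece of the structure of $P$ with the $i_\C$ and invoke their full faithfulness. The action of $P$ on a morphism $f \colon p \to q$ of very fibrous polynomials is a transformation $P(f) \colon P(p) \Rightarrow P(q)$; whiskering with $i_{\C'}$ and transporting along the coherence squares above lands it in the image of the fully faithful $i_\C$, hence determines a unique $P_d(f) \colon P_d(p) \Rightarrow P_d(q)$, and functoriality of $P$ on the morphism categories $\Poly^{vf}(\C',\C)$ transfers to $P_d$. Likewise the compositors $P(q \circ p) \cong P(q) \circ P(p)$ and unitors $P(1_\C) \cong \id_{\Cat/\C}$ of $P$, once whiskered with the appropriate inclusions, land in the image of $i_\C$ and so descend to a compositor $P_d(q \circ p) \cong P_d(q) \circ P_d(p)$ and unitors for $P_d$. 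The triangle and pentagon identities for $P_d$ are equalities of $2$-cells which, after whiskering, reduce to the corresponding identities for $P$; since the $i_\C$ are faithful these equalities already hold for $P_d$.

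The point worth flagging as the crux is \emph{why} this restriction argument is the one to run, rather than building $P_d$ directly from familial representations and the monoidal structure on $\Rep(\C,\C)$: composites of familial polynomials are only very fibrous, not familial (\cref{explicit_composite}), so there is no bicategory of familial polynomials to compose within, and \cref{all_familial} — guaranteeing that $P_d$ of a merely very fibrous polynomial is still a familial functor — is precisely what lets us pass to the larger bicategory $\Poly^{vf}$ without leaving the class of familial functors. Beyond this, no new coherence computation is required: every structure map of $P$ restricts along the fully faithful $i_\C$, and all the bifunctor axioms for $P_d$ are inherited from those of $P$.
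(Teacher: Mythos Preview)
Your proposal is correct and follows essentially the same approach as the paper: the paper's proof simply states that $P_d$ is constructed as a sub-bifunctor of $P$ by restricting each $\Cat/\C$ to its full subcategory of discrete fibrations (equivalent to $\ch$), and that \cref{all_familial} gives familiality of $P_d(p)$. Your argument spells out in more detail how the bifunctor structure descends along the fully faithful inclusions $i_\C$, but this is just an elaboration of what the paper leaves implicit.
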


\begin{proof}
$P_d$ is constructed as a sub-bifunctor of $P$ given by restricting the categories $\Cat/\C$ to discrete fibrations over $\C$, the category of which is equivalent to $\ch$. That $P_d$ sends all polynomials to familial functors follows from \cref{all_familial}.
\end{proof}

Note that $P_d$ does not land in $\Fam$ as vertical morphisms are not necessarily sent to cartesian transformations, though as discussed below the vertical morphisms we are interested will be sent to isomorphisms.

\begin{ex}\label{identitor_composite}
Consider the composition of a fibrous polynomial $p$ with the familial polynomial $|1_\C| \cong \g(S^0,E^0)$:
\bctikz
& \A \arrow{dl}[swap]{p_1} \arrow{r}{p_2} & \B \arrow{dr}{p_3} && \C^\to \arrow{dl}[swap]{dom} \arrow{r}{cod} & \C \arrow[equals]{dr} \arrow[equals]{dd}{} \\
\C' &&& \C & & & \C \\
&&&& \C \arrow[equals]{ul}{} \arrow[equals]{r}[swap]{} \arrow{uu}{\id_{(-)}} & \C \arrow[equals]{ur}[swap]{}
\ectikz
The resulting composite opfibration $\bar \A \to \bar \B$ with corresponding functor $\Phi \colon \bar \B \to \Cat//\C'$ has $\bar \B = P_d(|1_\C|)(\B) \cong \B$ by \cref{familial_identitor}, and for $b$ a $c$-cell of $\B$, 
$$\Phi(p) = \bigint \blp \C/c \cong \sint y(c) \atol{b} \B \atol{\Phi_p} \Cat//\C\brp.$$
Up to the unitor of $\Poly^f$, $\epsilon \cdot \id_p$ can be factored as the vertical followed by cartesian morphism:
\bctikz
& \bar \A \arrow{dl}[swap]{} \arrow{r}{} & \bar \B \arrow{dr}{} \arrow[equals]{d}{} \\
\C' & \A \arrow{l}{} \arrow{r}[swap]{} \arrow{u}[swap]{(\id_{(-)},-)} \arrow[equals]{d} & \bar \B \arrow{r}[swap]{} \arrow[equals]{d}{\wr} & \C \\
& \A \arrow{ul}{p_1} \arrow{r}[swap]{p_2} & \B \arrow{ur}[swap]{p_3}
\ectikz
where the natural transformation $\Phi_p \Rightarrow \Phi$ corresponding to the vertical map sends $x$ in $\Phi_p(b)$ to $(\id_c,x)$, with the evident action on morphisms.

The composite of $|1_{\C'}|$ and $p$ is then the opfibration $\bar \A \to \bar \B$ with corresponding functor $\Phi \colon \bar \B \to \Cat//\C'$, where $\bar \B = P_d(p)(\id_{\C'}) \cong \B$ as $\prod_{p_2}\delt_{p_1}$ as a right adjoint preserves terminal objects, and
$$\Phi(p) = \bigint \blp\Phi_p(b) \to \C' \atol{\C'/-} \Cat//\C'\brp.$$
Up to the unitor of $\Poly^f$ and a cartesian morphism like above, $\id_p \cdot \epsilon \colon |1_{\C'}|p \to p$ is the vertical morphism given by the transformation $\Phi_p \Rightarrow \Phi$ sending $x$ in $\Phi_p(b)$ lying over $c$ in $\C'$ to $(x, \id_c)$, with the evident action on morphisms.
\end{ex}

\section{Bicategory of Familial Representations}\label{rep_is_bicategory}

We now describe a bicategory structure $\Rep$ for familial representations, with small categories as objects and $\Rep(\C',\C)$ as morphism categories, while simultaneously assembling the functors $\g \colon \Rep(\C',\C) \to \Poly^f(\C',\C)$ into an identity-on-objects colax bifunctor $\g \colon \Rep \to \Poly^{vf}$.  The colax coherence maps for $\g$ are sent to isomorphisms by $P_d \colon \Poly^f \to \CAT$, endowing the composite $P_d \g \colon \Rep \to \CAT$, sending a representation to its associated familial functor, with the structure of a bifunctor.

\subsection{Identity and Identitor}

The identity representation in $\Rep(\C,\C)$ is given by $(S^0,E^0)$. 
The identitor of $\g$ at $\C$ is the (vertical) transformation in $\Poly^f$ from \cref{familial_identitor}:
\bctikz
 & \C^\to \arrow{dl}[swap]{dom} \arrow{r}{cod} & \C \arrow[equals]{dd} \arrow[equals]{dr} \\
\C & & & \C \\
 & \C \arrow[equals]{ul} \arrow{uu}[swap]{\id_{(-)}} \arrow[equals]{r} & \C \arrow[equals]{ur}
\ectikz

The identitor $\epsilon$ goes from $\g(S^0,E^0)$ to $1_\C$ and is not invertible in $\Poly^f(\C,\C)$, though by \cref{familial_identitor} it is sent to an isomorphism by $P_d$.  The direction of $\epsilon$ is opposite that of a \emph{lax} bifunctor, so as the same holds for the productor below, $\g$ will be a \emph{colax} bifunctor, which we prove in the following subsections.

\subsection{Product and Productor}

In \cref{familial_composite}, we showed that for familial functors $F \colon \chp \to \ch$ and $G \colon \chpp \to \chp$ with representations $(S,E)$ and $(S',E')$, their composite has representation $(SS',EE')$, where 
$$SS'_c = \coprod_{t \in Sc} \Hom(Et,S'), \qquad EE'(t,f) = \colim_{x \colon y(c') \to Et} E'f(x).$$  
Let this define the horizontal composition in $\Rep$, where both of these formulas are functorial in $S,E,S',E'$.

From the proof of \cref{fibrous_composite}, the polynomial composite $\g(S,E)\g(S',E')$ is of the form
\bctikz
& \A \arrow{dl}[swap]{p_1} \arrow[two heads]{r}{p_2} & \sint SS' \arrow{dr}{p_3} \\
\C'' & & & \C
\ectikz
where $\Phi_p(t,f \colon Et \to S')$ is given by
$$\bigint \blp\sint Et \atol{f} \sint S' \atol{E'} \chpp \cong DFib(\C'')\brp.$$
$\g(S,E)\g(S',E')$ is quasi-familial by \cref{fibrous_composite} but not familial, as $\Phi_p(t,f) \to \C''$ is not a discrete fibration: the fiber over $c''$ includes nontrivial morphisms of the form 
$$(i_x,\id) \colon \blp Ei_t(x),y \brp \to \blp x,E'f(i_x)(y) \brp$$ 
for $i \colon d \to c$ in $\C$, $x \in Et_c$, and $y \in E'f(Ei_t(x))_{c''}$. 

\begin{prop}\label{familial_replacement}
The familial replacement $|\g(S,E)\g(S',E')|$ is isomorphic to $\g(SS',EE')$.
\end{prop}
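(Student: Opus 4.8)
The plan is to exploit the fact that passing to the familial replacement is invisible to $P_d$ and that $P_d$ is compatible with polynomial composition: both $|\g(S,E)\g(S',E')|$ and $\g(SS',EE')$ then have the same associated familial functor, and the isomorphism of polynomials is recovered from the equivalence between familial polynomials and familial functors established above. Concretely, both diagrams are familial polynomials from $\C''$ to $\C$: the composite $\g(S,E)\g(S',E')$ is quasi-familial by \cref{fibrous_composite}, so its familial replacement $|\g(S,E)\g(S',E')|$ is familial by construction (\cref{all_familial}), while $\g(SS',EE')$ is familial by \cref{familial_polynomial}. By \cref{reflects_isos}, $\g$ is fully faithful and essentially surjective onto familial polynomials and $H = P_d \circ \g$ is an equivalence, so $P_d$ restricts to an equivalence $\Poly^{fam}(\C'',\C)_\times \to \Fam(\C'',\C)$; in particular it reflects isomorphisms. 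It therefore suffices to produce a natural isomorphism $P_d(|\g(S,E)\g(S',E')|) \cong P_d(\g(SS',EE'))$ of familial functors $\chpp \to \ch$, which is automatically an isomorphism in $\Fam$ since every natural isomorphism is cartesian.

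I would assemble this isomorphism from the following chain. By \cref{familial_productor}, the comparison vertical morphism from $|\g(S,E)\g(S',E')|$ to $\g(S,E)\g(S',E')$ is sent by $P_d$ to an isomorphism (equivalently, $P_d$ of a quasi-familial polynomial agrees with $P_d$ of its familial replacement, \cref{all_familial}), giving $P_d(|\g(S,E)\g(S',E')|) \cong P_d(\g(S,E)\g(S',E'))$. Since $P_d \colon \Poly^{vf} \to \CAT$ is a bifunctor (\cref{poly_bifunctor}), its compositor yields $P_d(\g(S,E)\g(S',E')) \cong P_d(\g(S,E)) \circ P_d(\g(S',E'))$, and this equals $H(S,E) \circ H(S',E')$ by \cref{composes_to_H}. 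Finally, \cref{fam_2cat} together with \cref{familial_composite} says that the composite familial functor $H(S,E) \circ H(S',E')$ has familial representation $(SS',EE')$, hence is naturally isomorphic to $H(SS',EE') = P_d(\g(SS',EE'))$ (\cref{familial_polynomial}). Composing these isomorphisms and transporting back along the equivalence of \cref{reflects_isos} produces the desired isomorphism of familial polynomials $|\g(S,E)\g(S',E')| \cong \g(SS',EE')$.

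The argument has no substantive obstacle; the one point to keep in view is that the familial replacement of the merely quasi-familial composite really is a familial polynomial, so that \cref{reflects_isos} applies, which is immediate from \cref{fibrous_composite} and the construction of familial replacement. A reader wanting an explicit picture can instead verify directly that the opfibration fiber of $\g(S,E)\g(S',E')$ over $(t,f) \in SS'_c$, namely the Grothendieck construction $\soint_{x \in \sint Et} \sint E'f(x)$, has all of its cocartesian morphisms lying over identities of $\C''$; hence its fiber over $c'' \in \C''$ is the category of elements of the $\Set$-valued functor $x \mapsto (E'f(x))_{c''}$ on $\sint Et$, whose set of connected components is $\colim_{x \in \sint Et}(E'f(x))_{c''} \cong \bigl(\colim_{x \in \sint Et} E'f(x)\bigr)_{c''} = (EE'(t,f))_{c''}$ since colimits in $\chpp$ are computed pointwise, naturally in $c''$ and in $(t,f)$. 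This is exactly the fiber over $c''$ of $\sint EE'(t,f)$, and the remaining data of the two familial polynomials (the copy of $\sint SS'$ and the discrete fibration to $\C$) coincide on the nose.
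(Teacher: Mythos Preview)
Your proof is correct and follows essentially the same route as the paper: both argue that $P_d$ carries $|\g(S,E)\g(S',E')|$ and $\g(SS',EE')$ to isomorphic familial functors (via \cref{all_familial}/\cref{familial_productor}, bifunctoriality of $P_d$, and \cref{familial_composite}), then invoke \cref{reflects_isos} to conclude. Your additional explicit computation of the fibers is a helpful supplement but not a different approach.
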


\begin{proof}
We have the following chain of isomorphisms 
$$P_d \blp \g(SS',EE') \brp \cong P_d \blp \g(S,E) \brp P_d \blp \g(S',E') \brp = P_d \blp \g(S,E)\g(S',E') \brp \cong P_d \blp |\g(S,E)\g(S',E')| \brp$$
by \cref{familial_composite}, bifunctoriality of $P_d$, and \cref{all_familial} respectively. As $P_d$ restricted to familial polynomials reflects isomorphisms (\cref{reflects_isos}), ${\g(SS',EE') \cong |\g(S,E)\g(S',E')|}$.
\end{proof}

More conceptually, the corresponding $\Phi_{|p|_2}(t,f)_c$ is the set of connected components of $\soint (E' \circ f)$ over $c$, which are precisely the $c$-cells of $EE'(t,f) = \colim_{x \colon y(c') \to Et} E'f(x)$, and ${(\pi_0)_\ast \colon \soint (E' \circ f) \to \colim (E' \circ f)}$ is the canonical map from the lax colimit of $E' \circ f$ to the strict colimit. The corresponding vertical map $\pi$, pictures below, is the productor $\g(SS',EE') \to \g(S,E)\g(S',E')$.
\bctikz
 & \soint EE' \arrow{dl}[swap]{} \arrow{r}{} & \sint SS' \arrow[equals]{dd} \arrow[]{dr} \\
\C'' & & & \C \\
 & \A \arrow{ul} \arrow{uu}[swap]{\pi} \arrow{r} & \sint SS' \arrow{ur}
\ectikz
By \cref{familial_productor}, $P_d$ sends $\pi$ to an isomorphism.

\subsection{Unitors and Unitality}%check left vs right...

The left unitor $\lambda$ in $\Rep$ sends $\blp *_c, t \colon y(c) \to S \brp \in S^0Sc$ to $t \in Sc$ and has as its $E$-part the canonical $\colim_{j \colon y(c') \to y(c)} E(tj) \cong Et$.  The right unitor $\rho$ sends ${\blp t,! \colon Et \to S^0 \brp \in SS^0c}$ to $t \in Sc$ and has as its $E$-part the canonical $\colim_{x \colon y(c') \to Et} y(c') \cong Et$.

The left unitality law making $\g$ a colax bifunctor amounts to the following for $(S,E)$ a representation from $\C'$ to $\C$:
\bctikz
1_{\C} \g(S,E) \arrow[equals]{d}[swap]{\wr} & \g(S^0,E^0)\g(S,E) \arrow{l}[swap]{\epsilon \cdot \id} \\
\g(S,E) & \g(S^0S,E^0E) \arrow{l}[swap]{\g(\lambda)} \arrow{u}[swap]{\pi}
\ectikz

By \cref{familial_identitor} and \cref{identitor_composite}, for $\Phi \colon \sint S^0S \to Fib(\C')$ corresponding to $\g(S^0,E^0)\g(S,E)$ and $t \in Sc$, $\epsilon \cdot \id$ is the composite of a cartesian morphism containing $\lambda^S$ and the vertical morphism given by the natural transformation
$$\sint Et \atol{(\id_c,-)} \bigint \blp \C/c \atol{t} \sint S \atol{E} DFib(\C')\brp = \Phi(*_c,t),$$
which composed with the transformation $(\pi_0)_\ast$ contracting to identities the morphisms of the form $\blp i \colon c'' \to c', \id_t \brp$ in $\Phi(*,t)$ yields the canonical isomorphism ${E(t) \cong \colim_{j \colon y(c') \to y(c)} E(tj)}$, inverse to that of $\lambda$. The top composite then amounts to 
\bctikz
& \soint E^0E \arrow{dl}[swap]{} \arrow{r}{} & \int S^0S \arrow{dr}{} \arrow[equals]{d}{} \\
\C' & \soint E \lar \rar \uar[swap]{\int (\lambda^E)^{-1}} \dar[equals] & \sint S^0S \rar \dar[swap]{\int \lambda^S} & \C \\
& \soint E \arrow{ul}{} \arrow{r}[swap]{} & \sint S \arrow{ur}[swap]{}
\ectikz
which by \cref{vertical_iso} is precisely $\g(\lambda)$, so the diagram commutes.

The right unitality law is the square:
\bctikz
\g(S,E) 1_{\C'} \arrow[equals]{d}[swap]{\wr} & \g(S,E)\g(S^0,E^0) \arrow{l}[swap]{\id \cdot \epsilon} \\
\g(S,E) & \g(SS^0,EE^0) \arrow{l}[swap]{\g(\rho)} \arrow{u}[swap]{\phi}
\ectikz

Similarly, for $\Phi \colon \sint SS^0 \to Fib(\C')$ corresponding to $\g(S,E)\g(S^0,E^0)$ and $t \in Sc$, $\id \cdot \epsilon$ is the vertical morphism given by the natural transformation
$$\sint Et \atol{(-,\id)} \bigint \blp\sint Et \to \C' \atol{\C'/-} DFib(\C')\brp = \Phi(t,!),$$
which composed with the transformation $(\pi_0)_\ast$ contracting morphisms of the form 
$$(i_t,\id_{c'}) \colon \blp Et_i(x), ji \colon c'' \to c \brp \to \blp x, j \colon c' \to c \brp$$ 
in $\Phi(t,!)$ for $i \colon c'' \to c'$ yields the canonical isomorphism $E(t) \cong \colim_{x \colon y(c') \to Et} y(c')$, inverse to that of $\rho$. The top composite then agrees with $\int \rho$ as in the left unitality square.

\subsection{Associator and Associativity}%reconsider excessive \sint use

In this section, we fix the following familial polynomials
$$\C''' \atol{r = \g(S'',E'')} \C'' \atol{q = \g(S',E')} \C' \atol{p = \g(S,E)} \C,$$
writing $\A_{pq} \to \B_{pq}$ to denote the opfibration in the polynomial $pq$ corresponding to $\Phi_{pq}$, and likewise for the other composites. We will show that the following diagram commutes in $\Poly^f(\C''',\C)$, which using the shorthand $|pq| = \g(SS',EE')$ suggested by \cref{familial_replacement} expresses the associativity law for the colax bifunctor $\g$:
\bctikz
(pq)r \arrow{r}{\alpha^\Poly} & p(qr) \\
{|pq|r} \arrow{u}{\pi \cdot \id} & {p|qr|} \arrow{u}[swap]{\id \cdot \pi} \\
{|(pq)r|} \arrow{u}{\pi} \arrow{r}[swap]{\g(\alpha^\Rep)} & {|p(qr)|} \arrow{u}[swap]{\pi}
\ectikz

For $t \in Sc$ and $f \colon Et \to S'$, we have
$$\B_{|(pq)r|} c = \B_{|pq|r} c = (SS')S'' c = \coprodl_{(t,f) \in SS'c} \Hom_{\chpp}(\colim(E' \circ \sint f),S'')$$
$$\B_{(pq)r} c = P_d(pq)(S'') c = \coprod_{(t,f)} \Hom_{Fib(\C'')}(\soint (E' \circ \sint f), \sint S'')$$ 
where $\sint Et \atol{\sint f} \sint S' \atol{E'} \chpp$. The $\B$-component of $\pi \colon |(pq)r| \to |pq|r$ is the identity and that of $\pi \cdot \id$ is the map $\B_{|pq|r} \to \B_{(pq)r}$ induced by the maps $Q \colon \soint (E' \circ \sint f) \to \int \colim(E' \circ \sint f)$, which is an isomorphism by \cref{familial_productor} as $\sint S''$ is a discrete fibration. Meanwhile, as $P_d(q)(S'') = S'S''$, we have 
$$\B_{|p(qr)|} c = \B_{p|qr|} c = \B_{p(qr)} c = P_d(p)(S'S'') = S(S'S'')_c = \coprodl_{t \in Sc} \Hom_{\chpp}(Et,S'S'')$$ 
with the $\B$ component of $|p(qr)| \atol{\pi} p|qr| \atol{\id \cdot \pi} p(qr)$ the identity. 

The associator $\alpha^\Rep$ has $S$-component given by 
$$\blp t \in Sc,f \colon Et \to S',F \colon \colim(E' \circ \sint f) \to S''\brp \in (SS')S''c \quad \mapsto \quad (t,G) \in S(S'S'')c$$ 
where $G$ sends $x \in Et_{c'}$ to 
$$\blp f(x) \in S'c', F_x \colon E'f(x) \to \colim(E' \circ \sint f) \atol{F} S''\brp \in S'S''c'.$$ 
This is an isomorphism as every such $G$ uniquely arises in this way: if $G$ sends $x$ to ${\blp g(x) \in S'c', G_x \colon E'g(x) \to S'' \brp}$ we can set $f(x) = g(x)$ and use $(G_x)$ to induce $F$ from the colimit.

The associator $\alpha^\Poly$ has $\B$-component
$$\coprod_{(t,f)} \Hom_{Fib(\C'')} \blp \soint (E' \circ \sint f), \sint S'' \brp \to \coprodl_{t \in Sc} \Hom_{\chpp}(Et,S'S'')$$
sending $(t,f,\bar F)$ in the domain, where by \cref{familial_productor} $\bar F \colon \soint (E' \circ \sint f) \to \sint S''$ must be of the form $Q \circ \sint F$ for some $F \colon \colim(E' \circ \sint f) \to S''$, to the same $G$ constructed for $F$ above. This shows that the $\B$-parts of the associativity diagram commute.

We now fix $(t,f,F)$ as above along with the corresponding $\bar F$ and $G$ that they determine. The fiber of $\A_{(pq)r}$ over $(t,f,\bar F)$ is 
$$\bigint \blp \soint(E' \circ \sint f) \atol{Q} \sint \colim(E' \circ \sint f) \atol{\sint F} \sint S'' \atol{E''} \chppp\brp,$$
while the fiber of $\A_{|(pq)r|}$ over $(t,f,\bar F)$ is 
$$\sint \colim \blp \sint \colim(E' \circ f) \atol{\sint F} \sint S'' \atol{E''} \chppp\brp.$$
The $\A$-part of the map $(\pi \cdot \id) \circ \pi$ between them contracts first the inner lax colimit to a strict colimit, then contracts the outer lax colimit to a strict colimit (which $Q$ no longer affects, see \cite[Example 4.8]{tholencolimit}). The fiber of $\A_{p(qr)}$ over $(t,G)$ is 
$$\bigint \blp x \mapsto \bigint \blp \sint E'f(x) \atol{\sint F_x} \sint S'' \atol{E''} \chppp \brp\brp$$
for $x$ in $Et$, while the fiber of $\A_{|p(qr)|}$ over $(t,G)$ is
$$\sint \colim \blp x \mapsto \colim(E'' \circ \sint F_x)\brp.$$
Similarly, the $\A$-part of the map $(\id \cdot \pi) \circ \pi$ between them contracts first the inner and then the outer lax colimits to strict ones.

The $E$ component of $\alpha^\Rep$ is given by the colimit decomposition isomorphism (see \cite[Lemma 7.13]{BatBerg}, \cite[Theorem 5.4]{tholencolimit}),
$$\colim_{x' \in \colim_{x \in Et} E'f(x)} E''F(x') \cong \colim_{x \in Et} \colim_{x' \in E'f(x)} E''F_x(x'),$$
while the $\A$ component of $\alpha^\Poly$ is the analogous isomorphism for lax colimits, sending $\blp (x,x'),x'' \brp$ to $\blp x,(x',x'') \brp$ as in \cref{opfibration_composite}. Both sides of the $\A$-part of the associativity equation, where the cartesian associator maps act in the direction indicated and the vertical productor maps act in the opposite direction, send $\blp (x,x'),x'' \brp$ to the image of $\blp x,(x',x'') \brp$ in the strict double colimit, either by first reindexing with $\alpha^\Poly$ then quotienting twice from lax to strict colimits or first quotienting then reindexing with $\alpha^\Rep$.

In conclusion, the diagram commutes, showing $\g$ satisfies the associativity law for colax bifunctors. In somewhat counterintuitive fashion, we have now proven that $\g$ is by all accounts a colax bifunctor before proving that the identity, unitor, product, and associator in $\Rep$ satisfy the laws of a bicategory. This is resolved below, but for now we summarize our results on $\g$ in the following:

\begin{thm}\label{is_colax}
$\g \colon \Rep \to \Poly^{vf}$ has the data, structure, and properties of a colax bifunctor.
\end{thm}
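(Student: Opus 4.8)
The plan is to collect the constructions of the four preceding subsections into the single package required of a colax bifunctor. The \emph{data} of $\g \colon \Rep \to \Poly^{vf}$ is: the identity-on-objects assignment of small categories; the functors on hom-categories, which are the fully faithful functors $\g \colon \Rep(\C',\C) \to \Poly^{vf}(\C',\C)$ of \cref{composes_to_H} landing in familial polynomials; the identitor 2-cell at each object $\C$, taken to be the vertical morphism $\epsilon \colon \g(S^0,E^0) \to 1_\C$ of \cref{familial_identitor}; and the productor 2-cell at each composable pair, taken to be the vertical morphism $\pi \colon \g(SS',EE') \to \g(S,E)\g(S',E')$ identified in \cref{familial_replacement}. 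The crucial observation making everything fit --- and the reason $\g$ is colax rather than lax or pseudo --- is that the polynomial composite $\g(S,E)\g(S',E')$ is only quasi-familial, while $\g(SS',EE') = |\g(S,E)\g(S',E')|$ is its familial replacement, so the comparison 2-cell points \emph{from} the replacement \emph{to} the composite; concretely $\pi$ is built pointwise from the canonical map $(\pi_0)_\ast$ from the lax colimit $\soint(E' \circ f)$ to the strict colimit $\colim(E' \circ f) = EE'(t,f)$. Naturality of $\epsilon$ and $\pi$ in the representations is automatic since the formulas for $SS'$, $EE'$ and the lax-to-strict comparison are functorial.

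First I would record that the target $\Poly^{vf}$ is a genuine bicategory (\cref{poly_is_bicategory}) and that the hom-functors $\g$ are well defined and fully faithful (\cref{composes_to_H}), so no work is needed there; the $\Rep$-level unitors $\lambda,\rho$ and associator $\alpha^\Rep$ are the maps written out in the ``Unitors and Unitality'' and ``Associator and Associativity'' subsections, so the colax coherence equations are at least meaningful without yet knowing $\Rep$ to be a bicategory. Next I would verify the two unitality laws. For left unitality, factor $\epsilon \cdot \id_p$ through a cartesian morphism carrying $\lambda^S$ together with a vertical morphism arising from $\sint Et \to \Phi(*_c,t)$, and compose with the quotient $(\pi_0)_\ast$; the result is the canonical isomorphism $Et \cong \colim_{j \colon y(c') \to y(c)} E(tj)$ inverse to the $E$-part of $\lambda$, so by \cref{vertical_iso} the whole composite equals $\g(\lambda)$ and the square commutes. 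The right unitality square is the same argument with $\rho^S$ and $Et \cong \colim_{x \colon y(c') \to Et} y(c')$.

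The one substantial step --- and the step I expect to be the main obstacle --- is the associativity law: commutativity in $\Poly^f(\C''',\C)$ of the hexagon built from $(pq)r$, $p(qr)$, $|pq|r$, $p|qr|$, $|(pq)r|$, $|p(qr)|$ for $p=\g(S,E)$, $q=\g(S',E')$, $r=\g(S'',E'')$, against the polynomial associator $\alpha^\Poly$ and the representation associator $\alpha^\Rep$. I would split this into the $\B$-part and the $\A$-part. On bases, $\alpha^\Rep$ and $\alpha^\Poly$ both implement the curry/uncurry correspondence between a map out of a colimit and a compatible family of maps out of its pieces, while $\pi$ contributes the colimit-comparison isomorphism $Q$ (an iso here by discreteness of $\sint S''$, \cref{familial_productor}), so the base square commutes essentially formally. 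On total categories, after fixing $(t,f,F)$ and the induced $\bar F,G$, both routes send an element $\bigl((x,x'),x''\bigr)$ of an iterated lax colimit to the image of $\bigl(x,(x',x'')\bigr)$ in the iterated strict double colimit: one route first applies the lax colimit-swap $\alpha^\Poly$ (the triple reshuffle of \cref{opfibration_composite}) and then twice quotients lax-to-strict, the other first quotients twice and then applies the strict colimit-decomposition isomorphism $\alpha^\Rep$ (\cite[Lemma 7.13]{BatBerg}, \cite[Theorem 5.4]{tholencolimit}); they agree because quotienting is natural in the index, the two reshufflings are the same map of triples, and the outer quotient is unaffected by $Q$ (\cite[Example 4.8]{tholencolimit}). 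The genuine care here is in bookkeeping the directions --- cartesian associators act forwards, vertical productors backwards --- and in keeping the nested lax/strict colimits straight. Assembling the unitality and associativity checks yields all the coherence properties, so $\g$ has the data, structure, and properties of a colax bifunctor.
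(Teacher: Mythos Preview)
Your proposal is correct and follows essentially the same approach as the paper: the paper's proof of this theorem is precisely the cumulative content of the preceding subsections (Identity and Identitor, Product and Productor, Unitors and Unitality, Associator and Associativity), and you have accurately summarized each of those arguments, including the factorization of $\epsilon \cdot \id$ into a cartesian and a vertical piece, the $\B$-part/$\A$-part split of the associativity hexagon, and the lax-versus-strict double colimit comparison. You also correctly flag the subtle point that these checks are carried out before $\Rep$ is known to be a bicategory, which the paper likewise notes and then resolves via \cref{reflects_triangle}.
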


\begin{rem}\label{like_bifunctor}%update with references, or change ref in main proof
$P_d \colon \Poly^{vf} \to \CAT$ is a bifunctor which sends the colax structure maps $\epsilon,\pi$ of $\g$ to isomorphisms, so the composite $P_d\g$ has the structure and properties of not just a colax bifunctor, but an actual bifunctor, as the structure maps of the composite are build out of those of $P_d$ along with $P_d$ applied to those of $\g$, and all of these maps are isomorphisms. 
\end{rem}

\subsection{Pentagon and Triangle Laws}

To demonstrate that the pentagon and triangle laws hold in $\Rep$, we recall the following more general fact about bicategories: % find source? transport?

\begin{prop}\label{reflects_triangle}
Assume $\bicat{A}$ denotes all of the data and structure of a bicategory (objects, categories of morphisms, identities, composition functors, unitors $\lambda_\bicat{A},\rho_\bicat{A}$, associator $\alpha_\bicat{A}$), $\bicat{B}$ is a bicategory, and $H$ contains the data, structure, and properties of a bifunctor $\bicat{A} \to \bicat{B}$ (mapping on objects, functors between morphism categories, productor $\pi$, identitor $\epsilon$, unitality and associativity equations) such that the functor $H_{\C',\C} \colon \bicat{A}(\C',C) \to \bicat{B}(\C',\C)$ is faithful for all objects $\C,\C'$ in $\bicat{A}$. Then $\bicat{A}$ is a bicategory and $H$ is a bifunctor.
\end{prop}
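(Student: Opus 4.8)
The plan is to observe that $\bicat{A}$ already comes equipped with every piece of data and structure a bicategory needs --- objects, hom-categories, composition functors, identities, and invertible unitors $\lambda_{\bicat{A}},\rho_{\bicat{A}}$ and associator $\alpha_{\bicat{A}}$ --- so the only thing left to prove is that the pentagon and triangle coherence axioms hold in $\bicat{A}$; once they do, $\bicat{A}$ is a bicategory, and $H$ --- which by hypothesis already carries all the data, structure, and defining equations of a bifunctor --- is automatically a genuine bifunctor. The whole argument then rests on a single reduction: both the pentagon and the triangle are equations between two parallel $2$-cells of $\bicat{A}$, so because each functor $H_{\C',\C}$ is faithful it suffices to check that $H$ sends the two sides to the same $2$-cell of $\bicat{B}$.

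First I would treat the triangle. Taking composable $1$-cells $f,g$ in $\bicat{A}$, I would apply $H$ to each of the two composites of unitors and associator making up the triangle and then rewrite: the associativity (cocycle) equation assumed for $H$ turns the whiskered $H(\alpha_{\bicat{A}})$ into $\alpha_{\bicat{B}}$ conjugated by appropriate instances of the productor $\pi$, while the two unitality equations for $H$ turn the whiskered $H(\lambda_{\bicat{A}})$ and $H(\rho_{\bicat{A}})$ into $\lambda_{\bicat{B}}$, $\rho_{\bicat{B}}$ conjugated by $\pi$ and the identitor $\epsilon$. After these substitutions the required identity in $\bicat{B}$ becomes a pasting diagram whose remaining regions are naturality squares for $\pi$ and $\epsilon$ (with respect to the relevant whiskerings), instances of functoriality of the composition functors of $\bicat{B}$, and exactly one use of the triangle axiom of $\bicat{B}$. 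Chasing it closed gives equality of $H$ applied to the two sides, hence the triangle in $\bicat{A}$ by faithfulness. The pentagon is proven the same way, now invoking the associativity equation for $H$ once for each of the (suitably whiskered) copies of $\alpha_{\bicat{A}}$ in the pentagon, and closing the resulting diagram in $\bicat{B}$ using naturality of $\pi$, functoriality of horizontal composition, and the pentagon axiom of $\bicat{B}$.

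It is worth emphasizing that there is no circularity: the unitality and associativity equations assumed for $H$ are equations among $2$-cells that are well formed using only the composition functors and $2$-cell composition of $\bicat{A}$, independently of whether pentagon and triangle hold, so they are available as rewrite rules from the outset. I expect the only genuine work to be the bookkeeping in these two large pasting diagrams --- deciding precisely where each $\pi$ and $\epsilon$ must be inserted so that the substitutions line up, and verifying that every leftover region is literally a naturality square or a functoriality instance --- but this is routine once the shapes are drawn, the conceptual content being entirely absorbed into the coherence of the honest bicategory $\bicat{B}$. Specializing to $\bicat{A}=\Rep$, $\bicat{B}=\Fam$, and $H=P_d\g$, which is faithful on hom-categories since each $H_{\C',\C}$ is an equivalence by \cref{rep_local_equivalence}, then completes the proof of \cref{is_bicategory_equivalent}.
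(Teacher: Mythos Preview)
Your proposal is correct and follows essentially the same approach as the paper: reduce to the pentagon and triangle laws, use faithfulness of $H$ to check their images in $\bicat{B}$, and verify these by combining the unitality/associativity equations for $H$ with naturality and the coherence laws of $\bicat{B}$. The paper carries this out by drawing one explicit pasting diagram for the triangle (and declares the pentagon analogous), citing exactly the ingredients you name, so your write-up matches the paper's argument at the level of strategy and of the specific tools invoked.
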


\begin{proof}
By definition of $\bicat{A}$ and $H$, it only remains to show that the pentagon and triangle laws hold in $\bicat{A}$, and as $H$ is locally injective on 2-cells, it suffices to show that the images of the pentagon and triangle diagrams commute in $\bicat{B}$. We show this for the triangle law; the (much larger) diagram for the pentagon is constructed similarly from the pentagon diagram in $\bicat{B}$. For composable 1-cells $p,q$ in $\bicat{A}$, we have the following diagram:
\bctikz[column sep=small]
H \blp (p1)q \brp \arrow{rrrr}{H(\alpha_\bicat{A})} \arrow{d}[swap]{\pi^{-1}} &&&&  H \blp p(1 q) \brp \arrow{d}{\pi^{-1}} \\
H(p1)H(q) \arrow{d}[swap]{H(\rho_\bicat{A}) \cdot \id} \arrow{r}{\pi^{-1} \cdot \id} & \blp H(p) H(1) \brp H(q) \arrow{rr}{\alpha_\bicat{B}} \arrow{d}[swap]{\id \cdot \epsilon^{-1} \cdot \id} &&  H(p) \blp H(1) H(q) \brp \arrow{d}{\id \cdot \epsilon^{-1} \cdot \id} &  H(p)H(1q) \arrow{l}[swap]{\id \cdot \pi^{-1}} \arrow{d}{\id \cdot H(\lambda_\bicat{A})} \\
H(p)H(q) \arrow{rrdd}[swap]{\pi} & \arrow{l}[swap]{\rho_\bicat{B} \cdot \id} \blp H(p)1 \brp H(q) \arrow{rr}{\alpha_\bicat{B}} \arrow{dr}[swap]{\rho_\bicat{B} \cdot \id} &&  H(p) \blp 1H(q) \brp \arrow{dl}{\id \cdot \lambda_\bicat{B}} \arrow{r}{\id \cdot \lambda_\bicat{B}} & H(p)H(q) \arrow{lldd}{\pi} \\
&&  H(p) H(q) \arrow{d}{\pi} \\
&& H(pq)
\ectikz
The diagram commutes by the associativity and unitality of $H$, naturality of $\alpha_\bicat{B}$, and the triangle law for $\bicat{B}$.  The outer left and right composites equal $H(\rho_\bicat{A} \cdot \id)$ and $H(\id \cdot \lambda_\bicat{A})$, respectively, by naturality of $\pi$, so this is precisely the image under $H$ of the triangle diagram for $p,q$ in $\bicat{A}$.  
\end{proof}

%By \cref{like_bifunctor}, $P_d\g \colon \Rep \to \Fam$ satisfies the conditions of \cref{reflects_triangle}, with $\Rep$ endowed with all of the data and structure of a bicategory, $P_d\g$ having all of the data, structure, and properties of a bifunctor, and $\Rep(\C',\C) \to \Fam(\chp,\ch)$ an equivalence, hence faithful. The application of \cref{reflects_triangle} completes the proof of the main result:
%
%\begin{thm}\label{is_bicategory}
%$\Rep$ is a bicategory and $P_d\g \colon \Rep \to \Fam$ is a biequivalence.
%\end{thm}

\subsection{Monad Representations}\label{rest_of_equations}

Here we give the complete list of equations defining a formal monad in $\Rep$, or equivalently a monoid in the monoidal category $\Rep(\C,\C)$, completing the description in \cref{monad_representation}.

A familial monad representation is given by a familial representation $(S,E)$ from $\C$ to $\C$ equipped with morphisms of representations $e \colon (S^0,E^0) \to (S,E)$ and $m \colon (SS,EE) \to (S,E)$ satisfying the following equations:

\begin{itemize}
\item Left unitality:

\bctikz
S^0S \arrow{dr}[swap]{\lambda^S} \arrow{r}{e^S;id} & SS \arrow{d}{m^S} \\
 & S
\ntikz
\colim_{\iota \colon y(c') \to y(c)} E(t\iota) & \colim_{x \colon y(c') \to Ee^S(*_c)} E(te^E(x)) \arrow{l}[swap]{\colim_{e^E}id} \\
& Et = Em(e^S(*_c),t \colon y(c) \to S) \arrow{ul}{\lambda^E} \arrow{u}[swap]{m^E}
\ectikz

\item Right unitality:

\bctikz
SS \arrow{d}[swap]{m^S} & SS^0 \arrow{dl}{\rho^S} \arrow{l}[swap]{id;e^S} \\
 S
\ntikz
\colim_{x \colon y(c') \to Et} Ee^S(*_{c'}) \arrow{r}{\colim_{id} e^E} & \colim_{x \colon y(c') \to Et} y(c') \\
Em(t, e^S \circ ! \colon Et \to S^0 \to S) = Et \arrow{ur}[swap]{\rho^E} \arrow{u}{m^E}
\ectikz

\item Associativity, where for $t \in Sc$, $f \colon Et \to S$, $F \colon \colim_{x \colon y(c') \to Et} Ef(x) \to S$, $\alpha^S((t,f),F)$ is given by $(t,G)$ where $G \colon Et \to SS$ with $G(x) = (f(x),F|_{Ef(x)})$:

\bctikz
(SS)S \arrow{rr}{\alpha^S} \arrow{d}[swap]{m;id} & & S(SS) \arrow{d}{id;m} \\
SS \arrow{dr}[swap]{m} & & SS \arrow{dl}{m} \\
& S
\ectikz
\bctikz[column sep=small]
\colim_{x' \colon y(c'') \to \colim_{x \colon y(c') \to Et} Ef(x)} EF(x') & \colim_{x \colon y(c') \to Et} \colim_{x' \colon y(c'') \to Ef(x)} EG(x)(x') \arrow{l}[swap]{\alpha^E} \\
\colim_{x' \colon y(c'') \to Em(t,f)} EF(m^Ex') \arrow{u}{\colim_{m^E}id} & \colim_{x \colon y(c') \to Et} Em(f(x),G(x)) \arrow{u}[swap]{\colim_{id} m^E} \\
Em^S(m^S(t,f),Fm^E) \rar[equals] \arrow{u}{m^E} & Em^S(t, m^S(f(x),G(x))) \arrow{u}[swap]{m^E}
\ectikz
\end{itemize}

\bibliography{mybib}
\bibliographystyle{alpha}

\end{document}